\newtheorem{theorem}{Theorem}[section]
\newtheorem{lemma}[theorem]{Lemma}
\newtheorem{corollary}[theorem]{Corollary}
\newtheorem{proposition}[theorem]{Proposition}
\theoremstyle{definition}
\newtheorem{definition}[theorem]{Definition}
\theoremstyle{remark}
\newtheorem{remark}[theorem]{Remark}
\numberwithin{equation}{section}
\newcommand{\mc}{\mathcal}
\newcommand{\C}{{\mathbb C}}
\newcommand{\Z}{{\mathbb Z}}
\newcommand{\CB}{{\mathcal B}}
\newcommand{\CE}{{\mathcal E}}
\newcommand{\CF}{{\mathcal F}}
\newcommand{\CG}{{\mathcal G}}
\newcommand{\CO}{{\mathcal O}}
\newcommand{\CK}{{\mathcal K}}
\newcommand{\CM}{{\mathcal M}}
\newcommand{\CT}{{\mathcal T}}
\newcommand{\id}{{\rm{id}}}
\newcommand{\mf}{\mathfrak}
\newcommand{\fg}{{\mf g}}
\newcommand{\fb}{{\mf b}}
\newcommand{\cB}{\mc B}
\newcommand{\cC}{\mc C}
\newcommand{\be}{\begin{equation}}
\newcommand{\ee}{\end{equation}}
\newcommand{\cM}{\mc M}
\newcommand{\inv}{^{-1}}
\newcommand{\half}{\frac{1}{2}}
\newcommand{\is}{\sqrt{-1}}
\newcommand{\lr}{\longrightarrow}
\newcommand{\U}{{\rm{U}}}
\newcommand{\End}{{\rm{End}}}
\newcommand{\Hom}{{\rm{Hom}}}
\newcommand{\Uq}{{{\rm U}_q}}
\newcommand{\GL}{{\rm{GL}}}
\newcommand{\TL}{{\rm{TL}}}
\newcommand{\Sym}{{\rm{Sym}}}
\newcommand{\rank}{{\rm{rank}}}
\newcommand{\tl}{Temperley-Lieb }
\newcommand{\fsl}{{\mathfrak {sl}}}
\newcommand{\fgl}{{\mathfrak {gl}}}
\newcommand{\gl}{{\mathfrak {gl}}}
\newcommand{\ot}{\otimes}
\newcommand{\ve}{{\varepsilon}}
\newcommand{\HH}{\widehat{H}}
\newcommand{\HTL}{\widehat{\rm TL}}
\newcommand{\HBMW}{\widehat{\rm BMW}}
\newcommand{\ATLC}{{\widehat{\mathcal{TL}}}}
\newcommand{\TLC}{\mathcal{TL}}
\newcommand{\TLB}{{\rm{TLB}}}
\newcommand{\TLBB}{{\mathbb{TLB}}}
\newcommand{\TLBC}{\mathcal{TLB}}
\newcommand{\RTC}{\mathcal{RT}}
\newcommand{\qint}[1]{\ldbrack{#1}\rdbrack_q }
\begin{document}

\normalfont

\title[Schur-Weyl duality]{
Schur-Weyl duality for certain \\ infinite dimensional $\U_q(\fsl_2)$-modules}
\author{K. Iohara, G.I. Lehrer and R.B. Zhang} 
\thanks{Partially supported by the Australian Research Council.}
\thanks{The authors thank the Research Institute at Oberwolfach
for its hospitality at an RIP program, where this work was begun.}
\subjclass[2010]{17B37, 20G42 (primary) 81R50 (secondary)}
\keywords{Tangle category, Temperley-Lieb category of type $B$, Verma module.}
\address{Univ Lyon, Universit\'{e} Claude Bernard Lyon 1, CNRS UMR 5208, Institut Camille Jordan, 
43 Boulevard du 11 Novembre 1918, F-69622 Villeurbanne cedex, France}
\email{iohara@math.univ-lyon1.fr}
\address{School of Mathematics and Statistics,
University of Sydney, N.S.W. 2006, Australia}
\email{gustav.lehrer@sydney.edu.au, ruibin.zhang@sydney.edu.au}


\begin{abstract} 
Let $V$ be the two-dimensional simple module and $M$ be a projective Verma module for the quantum group of $\mathfrak{sl}_2$ at generic $q$.  
We show that for any $r\ge 1$, the endomorphism algebra of $M\otimes V^{\otimes r}$ is isomorphic to the type $B$ Temperley-Lieb 
 algebra $\TLB_r(q, Q)$ for an appropriate parameter $Q$ depending on $M$. The parameter $Q$ is determined explicitly. We also use the cellular 
 structure to determine precisely for which values of $r$ the endomorphism algebra is semisimple. A key element of our method is to identify 
 the algebras $\TLB_r(q,Q)$ as the endomorphism algebras of the objects 
 in a quotient category of the category of coloured ribbon graphs of Turaev and Reshitikhin or the tangle diagrams of Freyd-Yetter.
\end{abstract}

\maketitle

\tableofcontents

\section{Introduction} \label{sect:intro}

Let $V$ be the two-dimensional simple module and $M$ be a projective Verma module for the quantum group $\U_q(\fsl_2)$ at generic $q$.  
We show that the endomorphism algebra of $M\otimes V^{\otimes r}$ is isomorphic to the type $B$ \tl algebra $\TLB_r(q, Q)$ of degree $r$ 
with an appropriate parameter $Q$ depending on $M$. 
We derive this from a more general result, which states that the full subcategory $\CT$ of the category of $\U_q(\fsl_2)$-modules consisting 
of the objects $M\otimes V^{\otimes r}$ for all $r\in\Z_{\ge 0}$ is isomorphic to the \tl category of type $B$ \cite{GL03} with parameters $q,Q$.  
The \tl algebra of type $B$ is the endomorphism algebra of an object of this category, 
in the same way as the Brauer algebra in the Brauer category \cite{LZ14}. 

There are two variants of the \tl category which we meet; one is defined as a subquotient of the coloured framed
 tangle category extensively studied by Freyd, Yetter \cite{FY}, Turaev, Reshitikhin \cite{RT, RT-1, T} 
 and others. This is used to prove the result referred to above.
Another version of the  \tl category of type $B$ which we denote by $\TLBB(q,Q)$ was first introduced in \cite{GL03};  
we briefly explain it in Section \ref{sect:TLB-old},
where we show that the two versions are equivalent, with appropriate parameter matchings. 

This enables us to bring into play the cellular structure of the \tl algebra, which we use to determine precise criterion for 
the semisimplicity of the endomorphism algebra.

To explain the main idea of this new formulation, we recall that a uniform description of the affine Hecke algebra, affine \tl 
algebra and other related algebras was given in \cite{GL03}, where the unifying object is the braid group of type $B$. 
The affine \tl algebra is a quotient of the group algebra of this group, which factors through the affine Hecke algebra,
 while the  \tl  algebra of type $B$ is a further quotient of the affine \tl algebra. Now by embedding the  braid group of 
 type $B$ of $r$-strings into the braid group of type $A$ of $(r+1)$-strings using Lemma \ref{lem:aff-fin}, we arrive at 
 the diagrammatic description of the affine \tl algebra etc. in terms of polar tangle diagrams \cite{A, DRV}.  

One of the advantages of such diagrammatics is that it allows one to construct representations of the braid group of type $B$ 
and quotients of its group algebra using the theory of quasi triangular Hopf algebras such as quantum groups 
 (see, e.g., \cite{DRV, LZ06} and also Theorem \ref{thm:aff-Hecke-reps}).

Another advantage, which is more important to us here, is that it allows us to adapt techniques from quantum topology \cite{T} to 
develop categorical formulations of the group algebra of the braid group of type $B$ and its quotient algebras, and relate
 the resulting categories to categories of representations of quantum groups. We follow 
this strategy to develop a new version of the \tl category of type $B$. As we will see, several new categories also arise 
naturally from this process,  which may be interesting in their own right. 

We begin in Section \ref{sect:cat-RT} with a category $\RTC$ of coloured un-oriented tangle diagrams up to regular isotopy
 \cite{FY, RT, T}. The objects of the category are sequences of elements of $\cC:=\{m, v\}$, and the modules of morphisms are spanned
  by a class of coloured un-oriented tangle diagrams up to regular isotopy.  We study several quotient categories of $\RTC$, including
  the \tl category of type $B$  and some related new categories. Their endomorphism algebras are interesting in their own right, and are closely
related to endomorphism algebras of certain categories of representations \cite{ALZ, Ro, Ze}. 

The affine \tl category $\ATLC(q)$ is the quotient of $\RTC$ obtained by imposing on morphisms the skein relations \eqref{eq:skein1} 
and \eqref{eq:skein2}, and the free loop removal relation \eqref{eq:flr}.
The module $\HTL^{ext}_r(q):=\Hom_{\ATLC(q)}((m, v^r), (m, v^r)))$ of endomorphisms of the object 
$(m, v^r):=(m, \underbrace{v, \dots, v}_r)$ in $\ATLC(q)$  forms an associative algebra (see Definition \ref{def:ext-ATL}), 
which is generated by a subalgebra isomorphic to the affine \tl algebra $\HTL_r(q)$ together with two additional generators which are central. 
 This algebra has a more tractable structure than $\HTL_r(q)$ as we will see below. 

The one parameter multi-polar \tl category $\ATLC(q, \Omega)$ is obtained as a quotient of $\ATLC(q)$ by specialising 
the central elements mentioned above to appropriate scalars related to $\Omega$ (Definition \ref{def:ATLC-q}). 
 The  \tl category $\TLBC(q, \Omega)$ of type $B$ is a full subcategory of $\ATLC(q, \Omega)$ with objects of the 
 form $(m, v^r)$ for all $r\in\Z_{\ge 0}$. The category $\ATLC(q, \Omega)$ contains the finite \tl category $\TLC(q)$ as a 
 full subcategory in two different ways (Remark \ref{rem:tlsub}); one of these
 is also contained in $\TLBC(q, \Omega)$. 

The interrelationships among the categories mentioned above and other categories which arise naturally in reformulating the  \tl 
category of type $B$ are recapitulated in Section \ref{sect:summary} and illustrated in Figure \ref{fig:cats}.

The structure of the category $\TLBC(q, \Omega)$ (in its two-parameter version) is studied in detail in 
Section \ref{sect:TLBC-struct}. In particular, we are able to determine explicitly the dimensions of the morphism 
spaces at  generic $q$.  The category $\TLBC(q, \Omega)$ is quite different from the 
\tl category $\TLBB(q,Q)$ of type $B$ introduced in \cite{GL03}. Nevertheless, we give a 
direct proof that $\TLBC(q, \frac{\Omega}{\sqrt{-1}})$ and $\TLBB(q,Q)$ are isomorphic in Section \ref{sect:TLB-old}. 

We construct in Theorem \ref{thm:RT} a tensor functor $\widehat\CF$ from the category $\RTC$ to the category 
$\CO_{int}$ of finitely generated integral weight $\U_q(\fsl_2)$-modules of type-${\bf 1}$, which are locally
$\U_q(\fb)$-finite   (where $\U_q(\fb)$ is 
the quantum Borel subalgebra). This functor factors through the category $\ATLC(q, \Omega)$ of type $B$ 
with the parameter $\Omega$, whose  dependency on $M$ is given by \eqref{eq:omega-value}. This induces a functor from $\ATLC(q, \Omega)$ to $\CO_{int}$,
 which restricts to a functor $\CF': \TLBC(q, \Omega) \longrightarrow \CT$, where $\CT$ is the full 
 subcategory of $\CO_{int}$ mentioned above. 
Since $M$ is a projective Verma module for $\U_q(\fsl_2)$, the  structure of the category $\CT$ is relatively easy to understand. 
 Putting together the structural information for $\TLBC(q, \Omega)$  and $\CT$,  we are able to show  in 
 Theorem \ref{thm:main} that $\CF'$ is an isomorphism of categories.

The categorical development here also leads to improved understanding of the theory of the affine \tl algebra. 
For example, Lemma \ref{lem:central-skein} shows that  the translation generator of 
the  $\HTL_r(q)$ subalgebra of  $\HTL^{ext}_r(q)$ 
satisfies a $3$-term skein relation over the centre. The skein relations involve the additional (central) generators, 
thus is not a relation inside the $\HTL_r(q)$ subalgebra.  This gives a conceptual explanation of the fact \cite{GL03} 
that the translation generator always has a characteristic polynomial of degree $2$  in the cell  and irreducible 
representations of the affine \tl algebra (see Remark \ref{rem:skein-universal}).

Throughout the paper, we work over the ground field $\CK_0:=\C(q^{\frac{1}{k}})$, where $q$ is an indeterminate over $\C$ such that $(q^{\frac{1}{k}})^k=q$
for some fixed positive integer $k$.

\section{The affine Temperley-Lieb algebra}\label{sect:ATL-algebra}

We begin with a brief discussion of the affine Temperley-Lieb algebra and related algebras following \cite{GL98, GL03}.  
The unifying object is the braid group of type $B$. 
 Among the quotients of its group algebra are the extended affine Hecke algebra of type $A$, the extended affine Temperley-Lieb algebra, the affine BMW algebra
 and the Temperley-Lieb algebras $\TLB_r(q,Q)$ of type $B$.

\subsection{The Artin braid group $\Gamma_r$ of type $B$}

Fix an integer $r\ge 2$. 
The braid group  of type $A$  on $r$ strings is denoted by $B_r$ and  has the following standard presentation. 
It has generators $b_1, \dots, b_{r-1}$, and defining relations 
\begin{eqnarray}\label{eq:braid}
\begin{aligned}
b_i b_{i+1} b_i = b_{i+1} b_i b_{i+1}, \quad b_i b_j = b_j b_i, \quad |i-j|>1.
\end{aligned}
\end{eqnarray}
The braid group $\Gamma_r$ of type $B$ \cite{GL03} is generated by $\{\sigma_i,  \xi_1\mid 1\le i\le r-1\}$ with the following relations:  
\begin{enumerate}
\item 
the $\sigma_i$ satisfy 
the standard braid relations \eqref{eq:braid};   
\item $\xi_1$ commutes with all $\sigma_j$ for $j\ge 2$, and 
\begin{eqnarray}\label{eq:sigma-xi}
\sigma_1 \xi_1 \sigma_1 \xi_1 = \xi_1 \sigma_1 \xi_1\sigma_1. 
\end{eqnarray}
\end{enumerate}
Define $\xi_{i+1} := \sigma_i \xi_i \sigma_i$ for $1\le i\le r-1$. 
Then we have  (see \cite[Prop. (2.6)]{GL03})
\[
\begin{aligned}
\xi_i \xi_j=\xi_j\xi_i, \quad \sigma_j \xi_i = \xi_i \sigma_j, \quad j>i. 
\end{aligned}
\]
It is known that $\Gamma_r$ contains $B_r$ as a subgroup, and it also clearly contains the subgroup $\Z^r$ generated by the elements $\xi_i$. 

The following result appears to be well-known.
\begin{lemma} \label{lem:aff-fin}
Let $B_{r+1}$ be the type $A$ braid group on $r+1$ strings, generated by $b_0, b_1, \dots, b_{r-1}$ subject to the standard relations (see \eqref{eq:braid}).  
There exists an injective group homomorphism given by 
\[
\eta_r: \Gamma_r \longrightarrow B_{r+1}, \quad \xi_1\mapsto b_0^2, \quad \sigma_i \mapsto b_i, \quad 1\le i\le r-1.  
\]
\end{lemma}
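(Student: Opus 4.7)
My approach is to first verify that $\eta_r$ is a well-defined homomorphism, then establish injectivity.

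For well-definedness, I check that each defining relation of $\Gamma_r$ holds among the images $b_1,\dots,b_{r-1},b_0^2$ in $B_{r+1}$. The type-$A$ braid relations among $b_1,\dots,b_{r-1}$ are inherited from $B_{r+1}$. The relation $\xi_1 \sigma_j = \sigma_j \xi_1$ for $j\ge 2$ becomes $b_0^2 b_j = b_j b_0^2$, immediate from the commutation $b_0 b_j = b_j b_0$ valid since $|0-j|>1$. The remaining relation $\sigma_1 \xi_1 \sigma_1 \xi_1 = \xi_1 \sigma_1 \xi_1 \sigma_1$ amounts to $b_1 b_0^2 b_1 b_0^2 = b_0^2 b_1 b_0^2 b_1$; applying $b_0 b_1 b_0 = b_1 b_0 b_1$ twice shows that both sides reduce to the common value $(b_0 b_1)^3=(b_1 b_0)^3$.

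For injectivity, I would use the topological interpretation of the braid groups involved. Realize $\Gamma_r$ as the fundamental group of the unordered configuration space of $r$ points in the punctured plane $\C^{\times}$, and $B_{r+1}$ as $\pi_1$ of the unordered configuration space of $r+1$ points in $\C$. The natural inclusion
\[
UC_r(\C^{\times})\longrightarrow UC_{r+1}(\C), \qquad \{z_1,\dots,z_r\}\longmapsto\{0,z_1,\dots,z_r\},
\]
induces on $\pi_1$ precisely the homomorphism $\eta_r$: a small loop around $0$ in $\C^{\times}$ deforms, once viewed inside the ambient configuration space, into a full twist of strand $1$ about strand $0$, namely $b_0^2$. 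The image $\eta_r(\Gamma_r)$ is then identified with the subgroup $\pi^{-1}(\mathrm{Stab}_{S_{r+1}}(0))\subset B_{r+1}$ under the natural surjection $\pi: B_{r+1}\to S_{r+1}$, and the fact that this stabilizer subgroup admits precisely the presentation of $\Gamma_r$ on the generators $b_0^2,b_1,\dots,b_{r-1}$ is a classical consequence of the Fadell--Neuwirth fibration together with a Reidemeister--Schreier-type analysis.

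The main obstacle is this injectivity step; the relations check itself is mechanical. An alternative, purely algebraic route to injectivity exploits the semidirect-product decomposition $\Gamma_r\cong \Z^r\rtimes B_r$, with $\Z^r$ freely generated by $\xi_1,\dots,\xi_r$ (using $\xi_{i+1}=\sigma_i\xi_i\sigma_i$) and $B_r$ acting by permutation of indices. Under $\eta_r$ the $B_r$-factor embeds standardly into $B_{r+1}$, while the $\Z^r$-factor maps into the abelian subgroup of the pure braid group $P_{r+1}$ generated by the elements $\eta_r(\xi_i)=b_{i-1}\cdots b_1\, b_0^2\, b_1\cdots b_{i-1}$. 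Freeness of this rank-$r$ abelian subgroup is standard in pure braid group theory, and injectivity of $\eta_r$ then follows from injectivity on each factor of the semidirect product, together with the observation that the semidirect-product structure is preserved by $\eta_r$.
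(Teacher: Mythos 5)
Your verification that $\eta_r$ respects the relations of $\Gamma_r$ is exactly what the paper does: the authors state the injectivity as "well-known" and content themselves with the computation $b_1b_0^2b_1b_0^2=(b_1b_0)^3=(b_0b_1)^3=b_0^2b_1b_0^2b_1$, which is the same reduction you give. For injectivity your primary, topological route is the standard argument and is sound in outline: realise $\Gamma_r$ as $\pi_1$ of the configuration space of $r$ points in $\C^{\times}$, identify its image in $B_{r+1}$ with the index-$(r+1)$ subgroup $\pi^{-1}(\mathrm{Stab}_{S_{r+1}}(0))$ via the Fadell--Neuwirth fibration over the position of the distinguished point, and match generators. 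The one thing you are (legitimately) outsourcing is the identification of the Artin-presented group $\Gamma_r$ with that fundamental group; this is precisely the content of the references the paper leans on (e.g.\ Allcock's ``Braid pictures for Artin groups''), so your sketch does more than the paper, which offers no argument for injectivity at all.

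Your ``alternative, purely algebraic route'' is, however, broken. The group $\Gamma_r$ is \emph{not} isomorphic to $\Z^r\rtimes B_r$ with $B_r$ permuting the $\xi_i$: the subgroup $\langle\xi_1,\dots,\xi_r\rangle$ is not normal in $\Gamma_r$, since $\sigma_i\xi_i\sigma_i^{-1}=(\sigma_i\xi_i\sigma_i)\sigma_i^{-2}=\xi_{i+1}\sigma_i^{-2}$, and $\sigma_i^{-2}$ does not lie in $\langle\xi_1,\dots,\xi_r\rangle$. One can also see the failure globally: for $r=2$ the group $\Gamma_2=\langle\sigma_1,\xi_1\mid \sigma_1\xi_1\sigma_1\xi_1=\xi_1\sigma_1\xi_1\sigma_1\rangle$ contains a nonabelian free subgroup (it is an amalgam $\Z\ast_{\Z}\Z^2$ after the substitution $c=\sigma_1\xi_1$), whereas $\Z^2\rtimes\Z$ is metabelian. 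So ``injectivity on each factor of a preserved semidirect-product structure'' is not available; the element $\xi_i$ is conjugate to $\xi_{i+1}$ only up to the extra factor $\sigma_i^{-2}$, and no such decomposition of $\Gamma_r$ exists. If you want a self-contained proof of injectivity, stick with the covering-space argument (or, equivalently, the Reidemeister--Schreier presentation of $\pi^{-1}(\mathrm{Stab}(0))\subset B_{r+1}$), and discard the semidirect-product claim.
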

It is a pleasant exercise to verify directly that $\eta_r$ preserves the relation \eqref{eq:sigma-xi}.  
We have 
\[
\begin{aligned}
\eta_r(\sigma_1 \xi_1 \sigma_1 \xi_1 )&=b_1 b_0^2 b_1 b_0^2 = b_1 b_0 (b_0 b_1 b_0) b_0  \\
&=   (b_1 b_0 b_1) (b_0 b_1 b_0)=b_0 (b_1 b_0 b_1) b_0 b_1\\
& = b_0 ^2 b_1 b_0 ^2 b_1= \eta_r(\xi_1 \sigma_1 \xi_1\sigma_1).
\end{aligned}
\]

\begin{definition}\label{def:grprings}
We write $\cB_r$ for the group ring $\CK_0 B_r$ and $\CG_r$ for the group ring $\CK_0 \Gamma_r$. Of
course $\cB_r\subset \CG_r$.
\end{definition}

\subsection{Quotient algebras of the group algebra of $\Gamma_r$}
\subsubsection{The (extended) affine Hecke algebra of type $A$}

Let $J_r$ be the two-sided ideal of the group algebra  $\CG_r$ of the braid group of type B generated by $(\sigma_i -q)(\sigma_i +q^{-1})$ for all $i$.
The affine Hecke algebra $\HH_r(q)$ (cf. \cite[Def. (3.1)]{GL03}) is the quotient algebra of $\CG_r$ by the ideal $J_r$:
$
\HH_r(q):= \CG_r/J_r.
$
Denote by $T_i$ and $X_j$ respectively the images of $\sigma_i$  and $\xi_j$  in $\HH_r(q)$.  Then the elements $T_i$ ($1\le 1\le r-1$) and $X_j$ ($1\le j\le r$)
satisfy the following relations.
\be\label{eq:ha}
\begin{aligned}
&T_i T_j = T_j T_i, \quad |i-j|>1, \\
&T_i T_{i+1} T_i = T_{i+1} T_i T_{i+1}, \\
&(T_i -q)(T_i +q^{-1})=0, \\
&X_{i+1} = T_i X_i T_i, \quad X_i X_j =X_j X_i.
\end{aligned}
\ee

Moreover the relations \eqref{eq:ha} form a set of defining relations of $\HH_r(q)$.

The elements $T_i$ generate a subalgebra of $\HH_r(q)$, which is isomorphic to the finite dimensional Hecke algebra $H_r(q)$
 of type  $A_{r-1}$, with $ \CK_0$-basis $\{T_w\mid w\in\Sym_r\}$. Note that  
$H_r(q)$ is the quotient of $\CB_r$ by the two-sided ideal  $J_r\cap\CB_r$.

One deduces easily from the relations \eqref{eq:ha} that for any Laurent polynomial $f=f(X^{\pm 1}_1, \dots, X^{\pm 1}_r)$,
\begin{eqnarray}\label{eq:Bernstein-relation}
T_i f - f^{s_i} T_i = (q-q^{-1}) \frac{f - f^{s_i}}{1-X_i X_{i+1}^{-1}},
\end{eqnarray}
where  $f^{s_i}$ is obtained from $f$ by interchanging $X^{\pm 1}_i$ and $X^{\pm 1}_{i+1}$.  
From this relation it is evident that the symmetric Laurent polynomials in the elements $X_i^{\pm 1}$ are all 
central elements of $\HH_r(q)$, and it is known \cite{L} that they generate the center.

The elements $X_i^{\pm 1}$ generate the Laurent polynomial ring $\CK_0[X^{\pm 1}_1, \dots, X^{\pm 1}_r]$.  We have the vector space isomorphism
$
\HH_r(q)=H_r(q)\otimes_{\CK_0}\CK_0[X^{\pm 1}_1, \dots, X^{\pm 1}_r].
$

\subsubsection{The affine Temperley-Lieb algebra}\label{ss:atl}
Let $\langle \CE_3\rangle$ be the two-sided ideal of $\HH_r(q)$ generated by 
$
\CE_3=\sum\limits_{w\in\Sym_3}(-q^{-1})^{\ell(w)} T_w, 
$
where $\Sym_3=\Sym\{1, 2,3\}$ is the symmetric group on $\{1, 2,3\}$, 
regarded as a subgroup of $\Sym_r$ on $\{1, 2, \dots, r\}$.  For $i\in\{ 3, \dots,  r\}$, let 
$
\CE_i=\sum\limits_{w}(-q^{-1})^{\ell(w)} T_w, 
$
where the sum is over the elements of $\Sym\{i, i-1, i-2\}$.  Then $\CE_i\in\langle \CE_3\rangle$ for all $i$.

We define the affine Temperley-Lieb algebra \cite{GL03} by
\be\label{eq:defhtl}
\HTL_r(q) = \HH_r(q)/ \langle \CE_3\rangle. 
\ee

Let $C_i=T_i-q\in\HH_r(q)$ ($i=1,\dots, r-1$). These are the Kazhdan-Lusztig basis elements corresponding
to the simple reflections in $\Sym_r$, and satisfy $C_i^2=-(q+q\inv)C_i$. 
Moreover, applying  the automorphism $T_w\mapsto(-1)^{\ell(w)}(T_{w\inv})\inv$ of $\HH_r(q)$, 
to the relation at the bottom of
\cite[p. 487]{GL03}, we see, using the fact that this automorphism maps $E_i$ of {\it loc. cit.} to $q^6\CE_{i}$, that
\be\label{eq:tlc}
C_iC_{i+1}C_i-C_i=C_{i+1}C_iC_{i+1}-C_{i+1}=-q^3\CE_3.
\ee

Denote the image of $C_i$ in $\HTL_r(q)$ by $ e_i$ (so that $T_i\mapsto e_i+q$) and the image of $X_i$ by $x_i$.  
Then $\HTL_r(q)$ is generated by $e_i$ ($1\le i \le r-1$) and $x_i^{\pm 1}$ ($1\le i \le r$) 
subject to the following relations.
\be\label{eq:atlrel}
\begin{aligned}
&e_i e_j = e_j e_i, \quad |i-j|>1, \\
&e_i^2 = - (q+q^{-1}) e_i, \quad e_i e_{i\pm 1} e_i = e_i, \\
&x_{i+1} = (q+e_i) x_i (q+e_i), \quad x_i x_j =x_j x_i.\\
\end{aligned}
\ee
It follows from the above relations  that for any Laurent polynomial $f$ in $x^{\pm 1}_1, \dots, x^{\pm 1}_r$, 
\[
e_i f - f^{s_i} e_i = (qx_i x^{-1}_{i+1}-q^{-1}) \frac{f - f^{s_i}}{1-x_i x_{i+1}^{-1}}.
\]

The relation $x_1x_2=x_2x_1$ is also easily shown to amount to the relation 
\begin{eqnarray}\label{eq:constraints}
q e_1x_1^2+e_1x_1e_1x_1 =qx_1^2e_1+x_1e_1x_1e_1.  
\end{eqnarray}
This implies the following result. 
\begin{lemma}\label{lem:extra-inv}
Let  $\delta=-(q+q^{-1})$.  Then the following holds in $\HTL_r(q)$.
\begin{eqnarray}\label{eq:extra-inv}
\delta(q e_1x_1^2+e_1x_1e_1x_1) =q e_1 x_1^2e_1+ e_1 x_1e_1x_1e_1
= \delta(qx_1^2e_1+x_1e_1x_1e_1). 
\end{eqnarray}
\end{lemma}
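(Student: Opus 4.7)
The plan is to observe that the lemma follows from equation \eqref{eq:constraints} by the simple device of multiplying on either side by $e_1$ and using the quadratic relation $e_1^2 = -(q+q^{-1})e_1 = \delta e_1$ from \eqref{eq:atlrel}.

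More precisely, first I would multiply \eqref{eq:constraints} on the left by $e_1$, obtaining
\[
q e_1^2 x_1^2 + e_1^2 x_1 e_1 x_1 = q e_1 x_1^2 e_1 + e_1 x_1 e_1 x_1 e_1.
\]
Replacing $e_1^2$ by $\delta e_1$ on the left-hand side yields the first equality of \eqref{eq:extra-inv}. Next I would multiply \eqref{eq:constraints} on the right by $e_1$, obtaining
\[
q e_1 x_1^2 e_1 + e_1 x_1 e_1 x_1 e_1 = q x_1^2 e_1^2 + x_1 e_1 x_1 e_1^2,
\]
and again using $e_1^2 = \delta e_1$ gives the second equality.

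There is no real obstacle here: the only non-trivial input is the identity \eqref{eq:constraints}, which has been established just before the lemma from the centrality relation $x_1 x_2 = x_2 x_1$ together with $x_2 = (q+e_1)x_1(q+e_1)$. The role of the lemma is purely to record the two consequences obtained by sandwiching with $e_1$, which will presumably be used subsequently (for instance to extract a skein-type identity of the sort alluded to in Lemma~\ref{lem:central-skein}).
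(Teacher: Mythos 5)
Your proof is correct and is exactly the paper's argument: the paper's proof reads ``Multiplying \eqref{eq:constraints} by $e_1$ on the left (resp.\ right), we obtain the first (resp.\ second) equality,'' and your computation simply spells out the use of $e_1^2=\delta e_1$. Nothing further is needed.
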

\begin{proof}
Multiplying \eqref{eq:constraints} by $e_1$  on the left (resp. right), we obtain the first (resp. second) equality. 
\end{proof}

\begin{remark} The significance of the lemma is best appreciated when placed in the context of the extended 
affine Temperley-Lieb algebra $\HTL^{ext}_r(q)$ defined in Definition \ref{def:ext-ATL}, 
where \eqref{eq:extra-inv} may be understood as a ``skein relation'' imposed on the centre of $\HTL^{ext}_r(q)$, 
(see Lemma \ref{lem:central-skein}). 
\end{remark}

Another presentation of $\HTL_r(q)$ of which we shall make use below is as follows. 
Recall that the element $\tau:=\xi_1b_1b_2\dots b_{r-1}\in\Gamma_r$
is represented as the ``twisting braid'', and satisfies $\tau b_i\tau\inv=b_{i+1}$, where the subscript $i$ is taken mod $r$.
Denote by $V$ the image of $\tau$ in $\HH_r(q)$ and in $\HTL_r(q)$. Then $\HTL_r(q)$ is generated by 
$e_1,\dots,e_r,V$ with relations
\be\label{eq:atlrel2}
\begin{aligned}
&e_i e_j = e_j e_i, \quad |i-j|>1, \\
&e_i^2 = - (q+q^{-1}) e_i, \quad e_i e_{i\pm 1} e_i = e_i, \\
&Ve_iV\inv=e_{i+1},\\
\end{aligned}
\ee
for $i=1,\dots,r$, and in the the index is taken mod $r$. 

Note that $(e_i+q)\inv=e_i+q\inv$ is the image of $T_i\inv$ in $\HTL_r(q)$.
\subsubsection{The Temperley-Lieb algebra of type $B$}
%
%


\begin{definition} \label{def:TLB-alg} \cite{GL03} Given an invertible scalar $Q$, let  $J_Q$ be the two-sided 
ideal of the affine Temperley-Lieb algebra $\HTL_r(q)$ generated by the elements
\[
 (x_1-Q)(x_1+Q^{-1}) \quad\text{and  }\; e_1 x_1 e_1 + q(Q-Q^{-1}) e_1.
\]
The Temperley-Lieb algebra of type $B$, denoted by $\TLB_r(q, Q)$, is defined by 
\[
\TLB_r(q, Q):=\HTL_r(q)/J_Q.
\]
\end{definition}
We denote by $\pi_r(Q): \HTL_r(q) \longrightarrow \TLB_r(q, Q)$ the canonical surjection. 
We will also denote the images of the generators $x_1^{\pm 1}$ and $e_i$ in
$\TLB_r(q, Q)$ by the same symbols.  

\begin{remark}
The relations in \eqref{eq:extra-inv} are identically satisfied in $\TLB_r(q, Q)$. 
\end{remark}

Note that the subalgebra generated by $e_i$ ($i=1, ,2, \dots, r-1$) in $\TLB_r(q, Q)$ is isomorphic to the
usual  Temperley-Lieb algebra $\TL_r(q)$ on $r$ strings. 
\subsubsection{The affine BMW algebra}
The affine BMW algebra \cite{DRV} is another interesting quotient algebra of the group algebra $\CG_r$ of the braid group of type B. 
Fix non-zero scalars $z, y\in\CK_0$. Define elements $\theta_i\in\CG_r$ by 
\[
\theta_i:= 1-\frac{\sigma_i- \sigma_i^{-1}}{z}, \quad  i=1,2,\dots,r-1
\]
and let  $J_{BMW}$ be the two-sided ideal  generated by the elements 
\[
\begin{aligned}
&\theta_i \sigma_i - y \theta_i, \quad  \sigma_i \theta_i - y \theta_i, \quad
&\theta_i \sigma_{i-1}^{\pm 1} \theta_i - y^{\mp 1} \theta_i, \quad
\theta_i \sigma_{i+1}^{\pm 1} \theta_i - y^{\mp 1} \theta_i
\end{aligned}
\]
for all valid indices.  The affine BMW algebra $\HBMW_r(z, y)$ is defined by 
\[
\HBMW_r(z, y)= \CG_r/J_{BMW}.
\]
We will denote the images of 
$\sigma_i$ and $\theta_i$ in $\HBMW_r(z, y)$ by $g_i$ and $e_i$ respectively, 
and that of $\xi_i$ by $X_i$. Then $\HBMW_r(z, y)$ is generated by $g_i$, $e_i$ ($1\le i\le r-1$) and $X_j$ ($1\le j\le r$) subject to the following relations.
\begin{eqnarray*}
&\text{all $g_i$ and $X_j$ are  invertible}, \\
&g_i g_j = g_j g_i,  \quad |i-j|>1, \\
&g_i g_{i+1} g_i = g_{i+1} g_i g_{i+1}, \\
&e_i= 1 - \frac{g_i - g_i^{-1}}{z}, \\
&e_i g_i = g_i e_i = y e_i, \\
&e_i g_{i-1}^{\pm 1} e_i = e_i g_{i+1}^{\pm 1} e_i =y^{\mp 1} e_i, \\
&X_{i+1}= g_i X_i g_i, \quad X_i X_j =X_j X_i.
\end{eqnarray*}
It is easy to derive the following relations from the defining relations.
\[
e_i e_{i\pm 1} e_i = e_i, \quad  e_i^2 = \left(1- \frac{y - y^{-1}}{z}\right)e_i.
\] 

We will not discuss this algebra further, since we shall not need it in this paper. 

%
%
%
%
%
%
%
\subsection{A diagrammatic presentation of the braid group of type $B$}\label{sect:diagrams}

A widely used diagrammatic presentation of the braid group of type $B$ is in terms of braids on cylinders \cite{GL03}. 
We shall here adopt an alternative presentation which use braids with a pole \cite{A}. 
Let us represent 
the braid group $B_{r+1}$ of type $A$ in terms of braids in the standard way. Then the second presentation
of $\Gamma_r$ is obtained from this by regarding it as a subgroup of $B_{r+1}$ via the injection $\eta_r$ defined by 
Lemma \ref{lem:aff-fin}, where the $0$-th string is considered to be a pole. We can also turn a cylindrical braid into a braid
 with a pole by regarding the core of the cylinder as the pole and pushing it to the far left.

To distingush elements of $\Gamma_r$ and $B_{r+1}$,  we draw a braid of type $B$ as a diagram 
consisting of a pole and $r$-strings on its right, where a string can only cross the pole  an even number of times. 
The strings will be drawn as thin arcs, and the pole drawn as a vertical thick arc.  
The generators of $\Gamma_r$ can be depicted as follows.  
\[
\begin{aligned}
&
\begin{picture}(150, 70)(-20,0)
\put(-45, 28){$\sigma_i=$}
{
\linethickness{1mm}
\put(-15, 0){\line(0, 1){60}}
}
\put(0, 0){\line(0, 1){60}}
\put(40, 0){\line(0, 1){60}}
\put(18, 30){...}
\qbezier(60, 0)(60, 0)(68, 27)
\qbezier(72, 33)(72, 33)(80, 60)
\qbezier(60, 60)(70, 30)(80, 0)
\put(100, 0){\line(0, 1){60}}
\put(120, 0){\line(0, 1){60}}
\put(105, 30){...}
\put(56, -10){\small$i$}
\put(72, -10){\small{$i$+1}}
\put(130, 0){, }
\end{picture}
\\
&
\begin{picture}(150, 70)(-20,0)
\put(-52, 28){$\sigma_i^{-1}=$}
{
\linethickness{1mm}
\put(-15, 0){\line(0, 1){60}}
}
\put(0, 0){\line(0, 1){60}}
\put(40, 0){\line(0, 1){60}}
\put(18, 30){...}

\qbezier(60, 60)(60, 60)(68, 33)
\qbezier(80, 0)(80, 0)(72, 27)

\qbezier(60, 0)(70, 30)(80, 60)
\put(100, 0){\line(0, 1){60}}
\put(120, 0){\line(0, 1){60}}
\put(105, 30){...}
\put(56, -10){\small$i$}
\put(72, -10){\small{$i$+1}}
\put(130, 0){, }
\end{picture} 
\end{aligned}\\
\]
\[
\begin{aligned}
&
\begin{picture}(150, 70)(-20,0)
\put(-48, 28){$\xi_1=$}
{
\linethickness{1mm}
\put(-15, 0){\line(0, 1){18}}
\put(-15, 22){\line(0, 1){38}}
}
\qbezier(0, 60)(-7, 50)(-12, 42)
\qbezier(-18, 38)(-25, 30)(-15, 20)
\qbezier(-15, 20)(-15, 20)(0, 0)
\put(20, 0){\line(0, 1){60}}
\put(40, 0){\line(0, 1){60}}
\put(60, 30){............}
\put(120, 0){\line(0, 1){60}}
\put(130, 0){, }
\end{picture}
\\
&
\begin{picture}(150, 70)(-20,0)
\put(-55, 28){$\xi_1^{-1}=$}
{
\linethickness{1mm}
\put(-15, 0){\line(0, 1){38}}
\put(-15, 42){\line(0, 1){18}}
}

\qbezier(-16, 40)(-25, 30)(-18, 20)
\qbezier(-16, 40)(-16, 40)(5, 60)
\qbezier(-12, 17)(-12, 17)(5, 0)

\put(20, 0){\line(0, 1){60}}
\put(40, 0){\line(0, 1){60}}
\put(60, 30){............}
\put(120, 0){\line(0, 1){60}}
\put(130, 0){.}
\end{picture}
\end{aligned}\\
\]

The multiplication of $\Gamma_r$ is then given by concatenation of diagrams. 
Given two diagrams $D$ and $D'$, their concatenation $D'\circ D$ is 
obtained by composing the diagrams 
$D$ and $D'$ by joining the points on the bottom of $D$ with the points on the top of $D'$.

It is easy to see that  
the elements $\xi_j$ ($1\le j \le r$) can be depicted as in Figure \ref{fig:xi}. 

\begin{figure}[h]
\begin{center}
\begin{picture}(150, 100)(-30,0)
\put(-48, 45){$\xi_j=$}
{
\linethickness{1mm}
\put(-15, 0){\line(0, 1){25}}
\put(-15, 30){\line(0, 1){60}}
}
\put(-5, 0){\line(0, 1){17}}
\put(-5, 22){\line(0, 1){68}}
\put(-5, 29){\line(0, 1){61}}
\put(15, 0){\line(0, 1){5}}
\put(15, 10){\line(0, 1){80}}
\put(0, 45){...}
\qbezier(-18, 60)(-40, 30)(30, 0)
\qbezier(17, 82)(25, 87)(30, 90)

\qbezier(13, 80)(0, 72)(-3, 70)
\qbezier(-7, 68)(-8, 67)(-12, 64)

\put(45, 0){\line(0, 1){90}}
\put(58, 45){......}
\put(90, 0){\line(0, 1){90}}
\put(27, -10){\small$j$ }
\put(100, 0){. }
\end{picture}
\end{center}
\caption{Diagram for $\xi_j$}
\label{fig:xi}
\end{figure}

From the diagrammatical presentation of the braid group of type $B$, we can derive diagrammatical
 presentations for the quotient algebras of the group algebra of $\CG_r$ discussed above.

%
\subsection{Quantum groups}

Let $A=(a_{i j})$ be the Cartan matrix of a finite dimensional simple 
Lie algebra or  affine
 Lie algebra $\fg$.  If $\{\alpha_i\vert\, i=1, 2, \dots\ell\}$ is the set of simple 
roots of $\fg$, the Cartan matrix 
$A$ is defined by $a_{i j} = \frac{2(\alpha_i, \alpha_j)}{(\alpha_i, \alpha_i)}$, 
where the bilinear form on 
the weight space if normalised so that $(\alpha_i, \alpha_i)=2$ for the short 
simple roots. Let $q_i=q^{(\alpha_i, \alpha_i)/2}$ for all $i$. 

The quantum group $\U_q(\fg)$ over $\CK_0$ is generated by 
$\{E_i, F_i, K_i, K_i^{-1}\mid 1\le i\le \ell\}$ subject to the relations 
\begin{eqnarray}
& K_i K_i^{-1}=1, \quad K_i K_j = K_j K_i,  \label{eq:KK}\\
&K_i E_j K_i^{-1}= q_i^{a_{i j}} E_j, \quad K_i F_j K_i^{-1}= q_i^{-a_{i j}} F_j, \label{eq:KEKF}\\
&E_i F_j - F_j E_i = \delta_{i j}  \frac{K_i - K_i^{-1}}{q_i-q_i^{-1}}, \label{eq:EF}\\
&\sum\limits_{r=0}^{1-a_{i j}} (-1)^r \begin{bmatrix}1-a_{i j}\\ r\end{bmatrix}_{q_i} E_i^{1-a_{i j}-r} 
E_j E_i^r=0, \quad i\ne j, \label{eq:Serre-E}\\
&\sum\limits_{r=0}^{1-a_{i j}} (-1)^r \begin{bmatrix}1-a_{i j}\\ r\end{bmatrix}_{q_i} F_i^{1-a_{i j}-r} 
F_j F_i^r=0, \quad i\ne j,  \label{eq:Serre-F}
\end{eqnarray}
where $\begin{bmatrix}n\\ r\end{bmatrix}_q=\frac{[n]_q!}{[r]_q![n-r]_q!}$ and $[j]_q=\frac{q^j- q^{-j}}{q-q^{-1}}$.  

As is well known,  $\U_q(\fg)$ is a Hopf algebra with co-multiplication defined by
\[
\begin{aligned}
&\Delta(E_i)=E_i\otimes K_i + 1\otimes E_i, \\
&\Delta(F_i)=F_i\otimes 1 + K_i^{-1}\otimes F_i,\\
&\Delta(K_i)=K_i\otimes K_i. 
\end{aligned}
\]

By slightly modifying the definition above, we also obtain the quantum groups 
$\U_q (\gl_k)$  \cite{J}\cite[\S 6.1]{LZ06} and  $\U_q (\mathfrak{o}_k)$ \cite[\S8.1.2]{LZ06}.

Henceforth $\U_q (\fg)$ will denote either the quantum group of a finite dimensional simple Lie algebra $\fg$, or 
$\U_q (\gl_k)$ and  $\U_q (\mathfrak{o}_k)$.

We will consider only left $\U_q(\fg)$-modules of type ${\bf 1}$. 
Given two $\U_q(\fg)$-modules $V$ and $W$ of which at least one is finite dimensional, 
we denote by 
$
{R_{V, W}} : V\otimes W\longrightarrow V\otimes W
$
the $R$-matrix of $\U_q(\fg)$ on $V\otimes W$. 
Let $\check{R}_{V, W}=\tau_{V, W} R_{V, W}$, where $\tau_{V, W}$ is the permutation defined by 
\[
\tau_{V, W}: V\otimes W  \longrightarrow  W \otimes V, \quad v\otimes w \mapsto w\otimes v.
\]
Then $\check{R}_{V, W}\in\Hom_{\U_q(\fg)}(V\otimes W, W\otimes V)$ is an isomorphism.

\noindent{\bf Notation}.  It follows from the above that if $V$ and $W$ are $\U_q(\fg)$-modules, of which at
least one is finite dimensional, then $\check R_{W,V}\circ \check R_{V,W}\in\End_{\U_q(\fg)}(V\ot W)$.
We shall write $\check R_{V,W}^2$ for the endomorphism $\check R_{W,V}\circ \check R_{V,W}$.

\begin{proposition}\label{prop:braid}
Let $U, V, W$ be $\U_q(\fg)$-modules, at least two of which are finite dimensional. 
\begin{enumerate}
\item
The following relation holds in $\Hom_{\U_q(\fg)}(U\otimes V\otimes W,  W\otimes V\otimes U)$:
\begin{eqnarray}\label{eq:YBE}
\begin{aligned}
(\check{R}_{V, W}\otimes\id_U)\circ(\id_V\otimes \check{R}_{U, W})\circ (\check{R}_{U, V}\otimes\id_W)
\\
=(\id_W\otimes \check{R}_{U, V})\circ(\check{R}_{U, W}\otimes\id_V)\circ(\id_U\otimes \check{R}_{V, W}).
\end{aligned}
\end{eqnarray}
This is the celebrated Yang-Baxter equation. 

\item
The following relation holds in $\End_{\U_q(\fg)}(U\ot V\ot W)$:
\be\label{eq:brb}
\begin{aligned}
\left((\check R_{V,U}^2)\ot\id_W\right)\circ \left(\id_U\ot\check R_{W,V}\right)\circ \left((\check R_{W,U}^2)\ot\id_V\right)
\circ\left(\id_U\ot\check R_{V,W}\right)=&\\
\left(\id_U\ot\check R_{V,W}\right)\circ\left((\check R_{W,U}^2)\ot\id_V\right)\circ
 \left(\id_U\ot\check R_{W,V}\right)\circ \left((\check R_{V,U}^2)\ot\id_W\right).&
\end{aligned}
\ee
\end{enumerate}
\end{proposition}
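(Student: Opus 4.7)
The two parts have rather different characters. Part (1) is the classical quantum Yang--Baxter equation for the R-matrix, which I would derive from the quasi-triangular Hopf algebra structure on $\U_q(\fg)$. Part (2) is the ``type $B$'' braid identity at the level of R-matrices, and I would deduce it from part (1) by lifting the combinatorial computation in the proof of \lemref{lem:aff-fin} from the braid group $B_{r+1}$ to the representation category.

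\textbf{Part (1).} Since $\U_q(\fg)$ is quasi-triangular, the universal R-matrix $R$ satisfies
\[
R\,\Delta(x) = \Delta^{op}(x)\,R, \quad (\Delta \otimes \id)(R) = R_{13}R_{23}, \quad (\id \otimes \Delta)(R) = R_{13}R_{12},
\]
for all $x \in \U_q(\fg)$. A short standard calculation combining the second and third identities with the intertwining property yields the operator-level Yang--Baxter relation
\[
R_{12}R_{13}R_{23} = R_{23}R_{13}R_{12} \quad \text{in} \quad \End(U \otimes V \otimes W).
\]
Composing each side with the appropriate products of flips $\tau_{?,?}$ converts this immediately into \eqref{eq:YBE} for $\check R = \tau R$. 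The hypothesis that at least two of $U,V,W$ are finite-dimensional is needed only to guarantee that the (a priori infinite) sum expressing $R$ acts as a well-defined operator on the relevant triple tensor product.

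\textbf{Part (2).} The strategy is to categorify the proof of \lemref{lem:aff-fin}. There, the type $B$ braid identity \eqref{eq:sigma-xi} was deduced by writing each $\xi_1 = b_0^2$ and repeatedly applying the type $A$ braid relation $b_0 b_1 b_0 = b_1 b_0 b_1$ through the chain
\[
b_1 b_0^2 b_1 b_0^2 = b_1 b_0 (b_0 b_1 b_0) b_0 = (b_1 b_0 b_1)(b_0 b_1 b_0) = b_0 (b_1 b_0 b_1) b_0 b_1 = b_0^2 b_1 b_0^2 b_1.
\]
I would replay exactly this chain at the categorical level in $\End_{\U_q(\fg)}(U \otimes V \otimes W)$. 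Expanding each $\check R^2_{?,?}$ as a composition of two $\check R$'s, every application of the type $A$ braid relation in the group-theoretic chain lifts to one application of the Yang--Baxter equation \eqref{eq:YBE} from part (1), invoked on the appropriate triple of objects drawn from $\{U, V, W\}$.

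\textbf{Main obstacle.} The only real subtlety is the tensor-factor bookkeeping. Each $\check R_{X,Y}$ permutes the positions of $X$ and $Y$, so after each application the objects labelling the three tensor slots change, and one has to track which triple occupies the three slots at each intermediate stage before invoking \eqref{eq:YBE} on it in the correct order. Once this labelling is laid out carefully, no further computation is needed: \eqref{eq:brb} follows from finitely many applications of \eqref{eq:YBE}, together with the trivial observation that $\id_X \otimes \check R_{Y,Z}$ commutes with $\check R_{A,B} \otimes \id_C$ when these maps act on disjoint tensor factors. I expect this bookkeeping, rather than any nontrivial computation, to be the only real work in the proof.
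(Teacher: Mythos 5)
Your proposal is correct and follows essentially the same route as the paper, which disposes of part (1) as the well-known Yang--Baxter equation for the quasi-triangular structure and derives part (2) from part (1) together with the braid-relation computation underlying Lemma \ref{lem:aff-fin}. You simply supply more of the detail (the Hopf-algebraic derivation of YBE and the tensor-slot bookkeeping) that the paper leaves implicit.
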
\begin{proof}
Part (1) is well known. Part (2) follows from part (1) and Lemma \ref{lem:aff-fin}. Note that \eqref{eq:YBE} and \eqref{eq:brb} are
essentially braid relations of type $A$ and $B$ respectively.
\end{proof}

\subsection{Representations of the group algebra of $\Gamma_r$ and quotient algebras}
We continue to assume that, as in the previous section,  $\fg$ is a finite dimensional Lie algebra and that $\U_q(\fg)$
is its quantised enveloping algebra. 

\begin{lemma}\label{lem:braid-B-rep}
Let $M, V$ be left $\U_q(\fg)$-modules of type $\bf 1$ with $\dim_{\CK_0} V<\infty$. 
Then
$M\otimes V^{\otimes r}$ is naturally a left $\CG_r$-module, where the corresponding left representation is given by 
\[
\begin{aligned}
 &\nu_r^a:  \CG_r\longrightarrow \End_{\U_q(\fg)}(M\otimes V^{\otimes r}), \text{ where }\\
&\nu_r^a(\xi_1):=\check{R}_{V, M}\check{R}_{M, V}\otimes \id_V^{\otimes (r-1)}=\check{R}_{V, M}^2\otimes \id_V^{\otimes (r-1)}, \\
&\nu_r^a(\sigma_i):=\id_M\otimes \id_V^{\otimes (i-1)}\otimes\check{R}_{V, V}\otimes \id_V^{\otimes (r-i-1)},\quad 1\le i\le r-1.
\end{aligned}
\]
\end{lemma}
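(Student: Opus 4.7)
The strategy is to verify that the given assignment of generators respects the three families of defining relations of $\Gamma_r$. Each operator $\nu_r^a(\xi_1)$ and $\nu_r^a(\sigma_i)$ is manifestly a $\U_q(\fg)$-module endomorphism of $M\otimes V^{\otimes r}$, since the $\check R$-matrices between $\U_q(\fg)$-modules are intertwiners and tensoring with identities preserves this property; so once the relations are checked, the map will automatically land in $\End_{\U_q(\fg)}(M\otimes V^{\otimes r})$ as claimed.

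First I would establish the braid relations \eqref{eq:braid} among the $\sigma_i$. For $|i-j|>1$ the operators $\nu_r^a(\sigma_i)$ and $\nu_r^a(\sigma_j)$ act nontrivially on disjoint pairs of tensor factors and so commute trivially. For adjacent indices, the three-strand braid relation reduces, on the three $V$-slots where both sides act nontrivially, to the Yang-Baxter equation \eqref{eq:YBE} applied with $U=V=W=V$, which is Proposition \ref{prop:braid}(1).

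Next, the commutativity of $\xi_1$ with $\sigma_j$ for $j\ge 2$ is also immediate from disjointness: $\nu_r^a(\xi_1)$ acts nontrivially only on the factors $M$ and the first copy of $V$, while for $j\ge 2$ the operator $\nu_r^a(\sigma_j)$ acts nontrivially only on the $j$th and $(j+1)$st copies of $V$, which are distinct from the slots affected by $\nu_r^a(\xi_1)$.

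The one nontrivial relation is the mixed type-$B$ relation \eqref{eq:sigma-xi}. Both $\nu_r^a(\sigma_1)$ and $\nu_r^a(\xi_1)$ act as the identity on the third through $r$th copies of $V$, so the relation is equivalent to its restriction to $\End_{\U_q(\fg)}(M\otimes V\otimes V)$. This restriction is precisely Proposition \ref{prop:braid}(2) specialised to $U=M$, $V=W=V$: unwinding the definitions, the left-hand and right-hand sides of \eqref{eq:brb} become $\nu_r^a(\xi_1\sigma_1\xi_1\sigma_1)$ and $\nu_r^a(\sigma_1\xi_1\sigma_1\xi_1)$ respectively. The main care required is the bookkeeping that correctly matches the tensor-slot positions of the $\check R$-factors in \eqref{eq:brb} with the operators written down in the statement of the lemma; no further computation is needed beyond what is already supplied by Proposition \ref{prop:braid}.
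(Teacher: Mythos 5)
Your proof is correct and is essentially the paper's own argument: the paper simply cites Proposition \ref{prop:braid} (1) and (2), and your write-up fills in exactly the intended details (disjoint-support commutations, Yang--Baxter for the adjacent braid relations, and part (2) with $U=M$ for the type-$B$ relation \eqref{eq:sigma-xi}).
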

\begin{proof} 
The fact that this defines an action of $\Gamma_r$  follows from Proposition \ref{prop:braid} (1) and (2).
\end{proof}

The representation $\nu_r^a$ of $\CG_r$ in Lemma \ref{lem:braid-B-rep} may be modified to obtain another representation
$\tilde\nu_r^a:  \CG_r\longrightarrow \End_{\U_q(\fg)}(M\otimes V^{\otimes r})$ with 
\begin{eqnarray}\label{eq:modified}
\begin{aligned}
&\tilde\nu_r^a(\xi_1):=q^{\frac{2}{k}}\check{R}_{V, M}\check{R}_{M, V}\otimes \id_V^{\otimes (r-1)}, \\
&\tilde\nu_r^a(\sigma_i):=\id_M\otimes \id_V^{\otimes (i-1)}\otimes q^{\frac{1}{k}}\check{R}_{V, V}\otimes \id_V^{\otimes (r-i-1)},\quad 1\le i\le r-1.
\end{aligned}
\end{eqnarray}
In the case $\fg=\fsl_k$, it is convenient to use $\tilde\nu_r^a$, because the eigenvalues of the $\tilde\nu_r^a(\sigma_i)$ 
on the tensor space are just $q$ and $-q\inv$.

The construction in Lemma \ref{lem:braid-B-rep} defines some representations of $\CG_r$ as endomorphisms of  certain
vector spaces. For certain finite dimensional highest weight modules $V$ and for any $M$, these representations 
factor through some of the well known algebras discussed in Section \ref{sect:ATL-algebra}. The next result
collects several such cases.

\begin{theorem}[\cite{DRV}] \label{thm:aff-Hecke-reps}
Maintain the setting of Lemma \ref{lem:braid-B-rep}. 
\begin{enumerate}
\item If $\U_q (\fg)=\U_q (\gl_k)$ (resp. $\U_q(\fsl_k)$) with $k\ge 3$ and $V=\CK_0^k$ is the natural module, 
 the  representation $\nu_r^a$ (resp. $\tilde\nu_r^a$) of $\CG_r$ factors through the affine Hecke algebra $\HH(q)$
 for any $M$. 

\item   If $\U_q (\fg)=\U_q (\gl_2)$ (resp. $\U_q(\fsl_2)$) and $V=\CK_0^2$,  then for any $M$, the representation $\nu_r^a$
(resp. $\tilde\nu_r^a$) factors through the affine Temperley-Lieb algebra $\HTL(q)$.

\item If $\U_q (\fg)$ is $\U_q (\mathfrak{o}_k)$,  $\U_q(\mathfrak{so}_k)$ for $k\ge 3$, or $\U_q (\mathfrak{sp}_k)$ for 
even $k\ge 4$, and $V=\CK_0^k$ is the natural module, 
then for any $M$, the  representation $\nu_r^a$ of $\CG_r$ factors through the affine BMW algebra $\HBMW_r(q-q^{-1},  p^{1-k})$,
where $p= -q^{-1}$ for  $\U_q (\mathfrak{sp}_k)$, and $p=q$ for $\U_q (\mathfrak{o}_k)$  and $\U_q(\mathfrak{so}_k)$.
\end{enumerate}
\end{theorem}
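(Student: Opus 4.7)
The plan is to use Lemma \ref{lem:braid-B-rep} as a starting point: it already gives an action of $\CG_r$ on $M\otimes V^{\otimes r}$, so the remaining task is to verify that the additional relations cutting out each quotient act as zero. Crucially, all these additional relations only involve the generators $\sigma_i$, whose action is supported on neighbouring tensor factors $V\otimes V$. The module $M$ plays no role in these checks; one only needs a careful analysis of the spectrum of $\check R_{V,V}$ on $V\otimes V$ and, for the mixed BMW relations, of its interaction on three adjacent factors via the Yang-Baxter equation of Proposition \ref{prop:braid}(1).

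For part (1), with $V=\CK_0^k$ the natural $\U_q(\fgl_k)$- (or $\U_q(\fsl_k)$-) module, one has the decomposition $V\otimes V=S^2_q V\oplus \Lambda^2_q V$ into two irreducible summands, and $\check R_{V,V}$ acts as a scalar on each. After the rescaling in $\tilde\nu_r^a$ by $q^{1/k}$, these two scalars become precisely $q$ and $-q^{-1}$, so the Hecke quadratic relation $(\tilde\nu_r^a(\sigma_i)-q)(\tilde\nu_r^a(\sigma_i)+q^{-1})=0$ is immediate. Combined with the braid relations this establishes the factorisation through $\HH_r(q)$.

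For part (2) one takes $k=2$, so the only relation beyond part (1) still to verify is that the ideal $\langle\CE_3\rangle\subset\HH_r(q)$ acts by zero. A direct computation identifies the image of $\CE_3$ (up to a nonzero scalar) with the $q$-antisymmetriser from $V^{\otimes 3}$ onto $\Lambda^3_q V$; since $\dim V=2$ this projector vanishes, and the representation descends to $\HTL_r(q)$.

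For part (3), with $V=\CK_0^k$ the natural module in the orthogonal or symplectic case, $V\otimes V$ now decomposes into three irreducible summands (symmetric traceless, antisymmetric, and trivial), so $\check R_{V,V}$ has three distinct eigenvalues. The element $\theta_i=1-(\sigma_i-\sigma_i^{-1})/z$ with $z=q-q^{-1}$ is then, up to normalisation, the projector onto the trivial summand in positions $(i,i+1)$, and this immediately yields $\theta_i\sigma_i=\sigma_i\theta_i=y\theta_i$ where $y$ is the scalar by which $\check R_{V,V}$ acts on that summand. The mixed relations $\theta_i\sigma_{i\pm1}^{\pm1}\theta_i=y^{\mp1}\theta_i$ couple three consecutive strings and follow from the Yang-Baxter equation together with the projector identity and the ribbon axioms. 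The main obstacle is precisely this final step: identifying $y$ correctly as $p^{1-k}$, with $p=-q^{-1}$ symplectically and $p=q$ orthogonally, requires pinning down the ribbon normalisation through a quantum-trace computation, and verifying the mixed relation is more delicate than the purely quadratic checks because both the Yang-Baxter equation and the explicit trivial-summand projector must be used simultaneously.
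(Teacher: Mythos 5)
The paper does not prove this theorem at all: it is stated with the citation [\cite{DRV}] and no proof is given, so there is no internal argument to compare yours against. Your sketch is the standard one and its central structural observation is exactly right: the ideals $J_r$, $\langle\CE_3\rangle$ and $J_{BMW}$ are all generated by elements of the type-$A$ subalgebra $\cB_r\subset\CG_r$ (none involves $\xi_1$), so each generator acts as $\id_M\otimes(\cdot)\otimes\id_{V^{\otimes(r-j)}}$ on two or three adjacent copies of $V$, the module $M$ drops out entirely, and the verification reduces to the classical finite-type computations on $V^{\otimes 2}$ and $V^{\otimes 3}$. Two small points of precision. First, in part (1) the rescaling by $q^{1/k}$ is needed only for $\fsl_k$: for $\gl_k$ the weights of the natural module satisfy $(\epsilon_i,\epsilon_j)=\delta_{ij}$ and the unrescaled $\check R_{V,V}$ already has eigenvalues $q$ and $-q^{-1}$, which is why the theorem pairs $\nu_r^a$ with $\gl_k$ and $\tilde\nu_r^a$ with $\fsl_k$; your phrasing applies the rescaling in both cases. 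Second, in part (3) you correctly identify $e_i$ as a scalar multiple of the projection onto the trivial summand and correctly flag the mixed relations $\theta_i\sigma_{i\pm1}^{\pm1}\theta_i=y^{\mp1}\theta_i$ as the genuinely nontrivial step, but you do not carry it out; it is not a consequence of the Yang--Baxter equation alone, but of the compatibility of $\check R_{V,V}$ with the cup and cap maps (the ribbon/Kauffman skein identities), together with the quantum-trace evaluation that produces $y=p^{1-k}$. As a blind reconstruction of a cited result your outline is sound, but a complete proof would have to supply that last computation (or, as the paper does, defer to [\cite{DRV}]).
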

It is an immediate consequence of the Yang-Baxter equation that 
\begin{lemma}\label{lem:braid-rep}
Maintain the setting of Lemma \ref{lem:braid-B-rep}. Let $M=\CK_0$, the $1$-dimensional $\U_q(\fg)$-module.  Then
$V^{\otimes r}$ can be endowed with a left $\CB_r$-module structure with the corresponding left representation 
$\nu_r:  \CB_r\longrightarrow \End_{\U_q(\fg)}(V^{\otimes r})$ given by 
\be\label{eq:nu}
\nu_r(b_i):=\id_V^{\otimes (i-1)}\otimes\check{R}_{V, V}\otimes \id_V^{\otimes (r-i-1)},\quad i=1, 2, \dots, r-1.
\ee
\end{lemma}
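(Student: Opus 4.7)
The plan is to verify directly that the assignment $\nu_r(b_i)=\id_V^{\otimes(i-1)}\otimes \check{R}_{V,V}\otimes \id_V^{\otimes(r-i-1)}$ respects the defining braid relations \eqref{eq:braid} of $B_r$, and that each $\nu_r(b_i)$ indeed lies in $\End_{\U_q(\fg)}(V^{\otimes r})$. The equivariance is immediate: since $\check{R}_{V,V}\in\Hom_{\U_q(\fg)}(V\otimes V,V\otimes V)$, tensoring with identity maps on the remaining factors preserves $\U_q(\fg)$-linearity.

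For the far commutation $b_ib_j=b_jb_i$ with $|i-j|>1$, the operators $\nu_r(b_i)$ and $\nu_r(b_j)$ act non-trivially on disjoint pairs of consecutive tensor factors, so they commute by the general principle that endomorphisms with disjoint support in a tensor product commute. For the three-term braid relation $b_ib_{i+1}b_i=b_{i+1}b_ib_{i+1}$, both sides act trivially outside tensor positions $i,i+1,i+2$. On those positions, the required identity
\[
(\check R_{V,V}\otimes\id_V)\circ(\id_V\otimes \check R_{V,V})\circ(\check R_{V,V}\otimes\id_V)=(\id_V\otimes \check R_{V,V})\circ(\check R_{V,V}\otimes\id_V)\circ(\id_V\otimes \check R_{V,V})
\]
is precisely the Yang--Baxter equation \eqref{eq:YBE} of Proposition \ref{prop:braid}(1) specialised to $U=V=W$.

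Alternatively, the result can be viewed as a degenerate case of Lemma \ref{lem:braid-B-rep}. Taking $M=\CK_0$ the trivial module, one has $\check R_{V,\CK_0}\check R_{\CK_0,V}=\id_V$, so $\nu_r^a(\xi_1)=\id$; hence the restriction of $\nu_r^a:\CG_r\to\End_{\U_q(\fg)}(\CK_0\otimes V^{\otimes r})$ to the subgroup $B_r\subset\Gamma_r$ yields, under the canonical identification $\CK_0\otimes V^{\otimes r}\cong V^{\otimes r}$, precisely the representation $\nu_r$ of \eqref{eq:nu}. There is no real obstacle to overcome: the entire content of the lemma is packaged in the Yang--Baxter equation, which has already been established.
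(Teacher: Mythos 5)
Your proposal is correct and matches the paper's own treatment: the paper introduces this lemma with the single sentence that it is an immediate consequence of the Yang--Baxter equation, which is exactly the content of your verification (far commutation from disjoint support, the three-term relation from \eqref{eq:YBE} with $U=V=W$). Your alternative observation that the lemma is the $M=\CK_0$ specialisation of Lemma \ref{lem:braid-B-rep} is also sound and consistent with how the paper positions the two results.
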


For the strongly multiplicity free $\U_q(\fg)$-modules \cite{LZ06},  we have the following result. 
\begin{theorem}[\cite{LZ06}]\label{thm:sw-smf}
Let $V$ denote the natural module for $\U_q (\fg)=$ $\U_q (\gl_k)$,  $\U_q (\fsl_k)$ ($k>2$),  $\U_q (\mathfrak{o}_k)$,  
$\U_q(\mathfrak{so}_{2k+1})$, or $\U_q (\mathfrak{sp}_{2k})$,  the $7$-dimensional
irreducible module for $\U_q(G_2)$, or the $\ell$-dimensional simple modue for $\U_q(\gl_2)$ and $\U_q(\fsl_2)$ for $\ell>2$. 
Then for any $r\in\Z_{>0}$, 
\[
\End_{\U_q(\fg)}\left(V^{\otimes r}\right) =\nu_r(\CB_r).
\]
\end{theorem}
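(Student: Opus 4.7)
The plan is to prove Theorem \ref{thm:sw-smf} by induction on $r$, exploiting the fact that each module $V$ listed is \emph{strongly multiplicity free} (SMF) for its quantum group $\U_q(\fg)$. Specifically, the strategy rests on verifying two properties for each such pair $(V, \U_q(\fg))$: (i) for every irreducible summand $L(\mu)$ of any $V^{\otimes k}$, the tensor product $V \otimes L(\mu)$ decomposes without multiplicity as a direct sum of irreducibles; and (ii) the R-matrix $\check R_{V,L(\mu)}^2$ acts as distinct scalars on distinct irreducible summands of $V\otimes L(\mu)$. Property (i) is a classical branching rule check, while (ii) reduces to a ribbon-element/eigenvalue computation via the formula $\check R_{V,L(\mu)}^2 = q^{c(\lambda)-c(\mu)-c(V)}\cdot\id$ on the $L(\lambda)$-isotypic component, where $c$ is the quadratic Casimir eigenvalue. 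The listed modules are all known cases where the Casimir values separate the constituents.

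For the base case $r=2$, property (i) gives a multiplicity-free decomposition $V \otimes V = \bigoplus_j L(\lambda_j)$, so $\End_{\U_q(\fg)}(V\ot V)$ is a commutative algebra of dimension equal to the number of summands; property (ii) shows that $\check R_{V,V}$ has that many distinct eigenvalues, so its powers already exhaust $\End_{\U_q(\fg)}(V\ot V)$. This gives $\End_{\U_q(\fg)}(V\ot V) = \CK_0[\check R_{V,V}] = \nu_2(\CB_2)$.

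For the inductive step, assume $\End_{\U_q(\fg)}(V^{\otimes (r-1)}) = \nu_{r-1}(\CB_{r-1})$. Using (i), the Bratteli diagram whose level-$k$ vertices are the irreducible summands of $V^{\otimes k}$ is multiplicity-free at each branching; a basis of $\End_{\U_q(\fg)}(V^{\otimes r})$ is then given by matrix units $E_{P,Q}^{\mu}$ indexed by pairs of paths $P, Q$ from the trivial representation to a vertex $\mu$ at level $r$. I will show, in two sub-steps, that every such matrix unit lies in $\nu_r(\CB_r)$. First, matrix units $E_{P,Q}^\mu$ with $P$ and $Q$ agreeing at all vertices except possibly the level-$(r-1)$ vertex can be written as polynomials in $\nu_r(b_{r-1}) = \id^{\ot(r-2)}\ot \check R_{V,V}$ with coefficients that are projections onto isotypic components at level $r-1$, and the latter belong to $\nu_{r-1}(\CB_{r-1}) \ot \id_V \subseteq \nu_r(\CB_r)$ by induction combined with property (ii) applied to decompose $L(\mu')\ot V$. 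Second, a general matrix unit is a product of such ``local'' matrix units, with the variable edge descending one level at a time; each step is handled by conjugating by an element of $\nu_{r-1}(\CB_{r-1})$ that implements a suitable rearrangement. A dimension count $\dim\End_{\U_q(\fg)}(V^{\otimes r}) = \sum_\mu (\dim W_\mu)^2$ against the surjection $\nu_r(\CB_r) \twoheadrightarrow$ (the algebra generated) closes the induction.

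The main obstacle is the second sub-step: showing that one can ``propagate'' local matrix units corresponding to branching at level $k$ through the intermediate levels $k+1,\dots,r-1$ while staying inside $\nu_r(\CB_r)$. The issue is that the R-matrix at position $(k,k+1)$ directly detects only the branching at level $k+1$ viewed from below, not the branching at level $k$ viewed from above; bridging the two requires invoking property (ii) together with the braid relations \eqref{eq:YBE} so as to express the ``change-of-path'' isomorphisms in the multiplicity spaces as compositions of R-matrix actions at various positions. This is the technical core of the argument in \cite{LZ06}, and once it is in place the theorem follows from the inductive framework above.
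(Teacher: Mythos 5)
First, a remark on the comparison you asked for: the paper contains no proof of Theorem \ref{thm:sw-smf} at all — it is quoted from \cite{LZ06} — so the only thing to measure your proposal against is the argument of that reference, which you are attempting to reconstruct. Your framework (induction on $r$, multiplicity-free branching, a path/matrix-unit basis of $\End_{\U_q(\fg)}(V^{\ot r})$, separation of constituents by eigenvalues of squared $R$-matrices computed from the Casimir) is the right one, and the base case is fine. The problem is that you have deferred, rather than supplied, the only nontrivial step: your ``second sub-step'' of propagating local matrix units through the intermediate levels while staying inside $\nu_r(\CB_r)$ is essentially the assertion of the theorem, and closing with ``this is the technical core of the argument in \cite{LZ06}, and once it is in place the theorem follows'' outsources the proof to the result being proved. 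As written, the concluding dimension count has nothing to count.

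There is also a concrete error in the first sub-step. You claim that a matrix unit $E^{\mu}_{P,Q}$ with $P,Q$ differing only at the level-$(r-1)$ vertex is a polynomial in $\nu_r(b_{r-1})=\id^{\ot(r-2)}\ot\check{R}_{V,V}$ with coefficients given by level-$(r-1)$ projections. But $\nu_r(b_{r-1})$ is diagonalized by the decomposition of the \emph{last two} tensor factors, $V\ot V=\bigoplus_j L(\lambda_j)$, and so has only as many eigenvalues as $V\ot V$ has constituents (two, for the natural module of $\fgl_k$), whereas the branching of $L(\mu')\ot V$ whose summands you need to separate can have many more edges. The operator whose eigenvalues are the Casimir differences $q^{c(\mu)-c(\mu')-c(V)}$ that you quote is $\check{R}^2_{V^{\ot(r-1)},V}$, the image of the full twist $(b_{r-1}\cdots b_1)(b_1\cdots b_{r-1})$, not of $b_{r-1}$ alone; and even with that element one obtains only the \emph{diagonal} projections onto pairs $(\mu',\mu)$, never the off-diagonal units connecting two different $\mu'$ lying over the same $\mu$. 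Producing those off-diagonal units requires showing that the two decompositions of $L(\kappa)\ot V\ot V$ — by paths $\kappa\to\mu'\to\mu$ on one hand, and by the $\check{R}_{V,V}$-eigenspaces in the last two factors on the other — are in sufficiently general position; this is exactly where the strong multiplicity-freeness hypothesis of \cite{LZ06} (or a case-by-case verification for your list of modules) must enter, and it is the step that is missing.
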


\begin{remark} Note that the role of the affine Hecke algebra and related algebras in the above is very different from that  investigated by \cite{CP} in the context of the representation theory of quantum affine algebras.
\end{remark}

\section{A categorical approach to the affine Temperley-Lieb algebra}\label{sect:cats}
\subsection{A restricted category of tangle diagrams}\label{sect:cat-RT}
Certain categories of tangles, both oriented and unoriented, \cite{FY} and of coloured ribbon graphs \cite{RT}, 
have played  important roles in the construction of link and $3$-manifold invariants. 
We introduce here a category of unoriented tangles up to regular isotopy in the sense of \cite{FY}, which we denote by $\RTC$. 
Our category is a subcategory of the category $S-\mathbb{RT}\mathbb{ang}$, where $S$ is the set $\cC:=\{m,v\}$
which is defined in \cite[Def. 3.1]{FY}. We shall use the language of tangle diagrams, although an equivalent formulation could use the language of coloured ribbon graphs. 
 
The objects of $\RTC$ are sequences of elements of $\cC=\{m, v\}$, which are called ``colours'',  where the empty sequence is allowed. 
The morphisms are spanned, over a base ring $R$, by unoriented tangles up to regular isotopy
in the terminology of \cite{FY}, coloured by $\cC$. Such tangles are represented in ``tangle diagrams'' as unions of arcs; we say that an arc 
is {\em horizontal} if  its end points are either both at the top or at the bottom of the tangle. An arc is
{\em vertical} if it has end points and is not horizontal. To define (the subcategory) $\RTC$, we impose the following two conditions on morphisms.
\begin{enumerate}
\item Closed arcs (i.e. those with no end points) are all coloured by $v$. 

\item Any arc coloured by $m\in\cC$ is vertical, and no two such arcs cross.
\end{enumerate}

The composition of morphisms is explained in \cite{FY} and is essentially by concatenation of tangle diagrams. 

We call $\RTC$ the restricted coloured tangle category.

Since the two ends of any arc have the same colour, we will write $m$ or $v$ beside 
an arc to indicate its colour.   The term ``tangle diagram'' will be abbreviated to ``diagram'' below.

\begin{theorem} \label{thm:tensor-cat}
The category $\RTC$ has the following properties. 
\begin{enumerate} 
\item There is a bi-functor $\otimes: \RTC\times \RTC\longrightarrow \RTC$, called the tensor product, which is defined as follows. 
For any pair of objects $A=(a_1, \dots, a_r)$ and $B=(b_1, \dots, b_s)$, we have
$
A\otimes B=(A, B) := (a_1, \dots, a_r, b_1, \dots, b_s).
$
The tensor product is bilinear on morphisms. Given diagrams $D$ and $D'$, 
$D\otimes D'$ is their juxtaposition with $D$ on the left.

\item The morphisms are generated by the following elementary diagrams under tensor product and composition, 
\[
\begin{picture}(20, 70)(0,0)
\put(0, 0){\line(0, 1){60}}
\put(5, 0){,}
\put(-10, 40){$a$}
\end{picture}
\begin{picture}(40, 70)(-60,0)
\qbezier(-35, 60)(-35, 60)(-15, 0)
\qbezier(-15, 60)(-15, 60)(-22, 33)
\qbezier(-35, 0)(-35, 0)(-26, 25)
\put(-10, 0){, }
\put(-15, 40){$b$ }
\put(-40, 40){$c$ }
\end{picture}
\begin{picture}(40, 70)(-140,0)
\qbezier(-90, 0)(-90, 0)(-70, 60)
\qbezier(-90, 60)(-90, 60)(-83, 33)
\qbezier(-70, 0)(-70, 0)(-78, 26)
\put(-65, 0){, }
\put(-70, 40){$b$ }
\put(-95, 40){$c$ }
\end{picture}
\begin{picture}(140, 70)(-50,0)
\qbezier(20, 0)(40, 60)(60, 0)
\put(65, 0){, }
\put(38, 18){$v$}

\qbezier(95, 60)(115, 0)(135, 60)
\put(140, 0){, }
\put(112, 35){$v$}
\end{picture}
\]
where $a, b, c\in \cC=\{m, v\}$ with $b\ne c$ or $b=c=v$. 

\item The defining relations among the above generators are as follows.
\begin{enumerate}
\item Over and under crossings are inverses of each other: 
for all $a, b$ such that either $a\ne b$ or $a=b=v$, 
\[
\begin{picture}(70, 90)(65, 0)
\qbezier(60, 0)(100, 40)(60, 80)
\qbezier(70, 20)(60, 40)(70, 60)
\qbezier(75, 15)(80, 5)(90, 0)
\qbezier(75, 65)(80, 75)(90, 80)
\put(55, 45){$b$}
\put(85, 45){$a$}
\put(105, 35){$=$}
\end{picture}
\begin{picture}(50, 90)(110, 0)
\put(120, 0){\line(0, 1){80}}
\put(135, 0){\line(0, 1){80}}
\put(110, 45){$a$}
\put(140, 45){$b$}
\put(165, 35){$=$}
\end{picture}
\begin{picture}(50, 90)(140, 0)
\qbezier(200, 0)(160, 40)(200, 80)
\qbezier(190, 20)(200, 40)(190, 60)
\qbezier(185, 15)(180, 5)(170, 0)
\qbezier(185, 65)(180, 75)(170, 80)
\put(170, 45){$b$}
\put(200, 45){$a$}
\put(205, 0){;}
\end{picture}
\]

\item Braid relation:  for all $a, b, c$ at most one of which is equal to $m$,

\[
\begin{picture}(85, 100)(0, 0)
\qbezier(30, 90)(30, 90)(18, 75)
\qbezier(12, 70)(-10, 40)(30, 0)
\qbezier(0, 90)(45, 45)(60, 0)
\qbezier(60, 90)(50, 70)(40, 48)
\qbezier(36, 39)(28,25)(20, 15)
\qbezier(15, 12)(10, 5)(0,0)
\put(-5, 50){$b$}
\put(18, 50){$c$}
\put(48, 50){$a$}
\put(75, 40){$=$}
\end{picture}
\begin{picture}(85, 100)(-30, 0)
\qbezier(0, 90)(20, 45)(60, 0)
\qbezier(30, 90)(65, 55)(50, 18)
\qbezier(46, 12)(45, 8)(30, 0)

\qbezier(60, 90)(50, 80)(47, 73)
\qbezier(42, 68)(35, 50)(33, 40)
\qbezier(28, 32)(18, 10)(0, 0)
\put(4, 50){$c$}
\put(28, 50){$a$}
\put(55, 50){$b$}
\put(70, 0){;}
\end{picture}
\]

\item Straightening relations:
\[
\begin{picture}(150, 80)(0,0)

\qbezier(0, 0)(10, 80)(20, 30)
\qbezier(20, 30)(30, -30)(40, 70)
\put(50, 30){$=$}

\put(10, 55){$v$}


\put(70, 0){\line(0, 1){70}}

\put(85, 30){$=$}
\put(75, 55){$v$}


\qbezier(105, 70)(115, -30)(125, 30)
\qbezier(125, 30)(135, 80)(145, 0)
\put(135, 55){$v$}
\put(155, 0){;}
\end{picture}
\]

\item Sliding relations: for all $a\in\cC$, 

\[
\begin{picture}(80, 60)(0,0)
\qbezier(0, 45)(0, 45)(22, 0)
\qbezier(0, 0)(10, 13)(12, 15)
\qbezier(17, 21)(35, 50)(60, 0)
\put(-8, 35){$a$}
\put(32, 18){$v$}
\put(75, 15){$=$}
\end{picture}
\begin{picture}(80, 60)(-20,0)
\qbezier(0, 0)(25, 50)(45, 20)
\qbezier(49, 15)(50, 15)(60, 0)
\qbezier(60, 45)(60, 45)(38, 0)
\put(62, 35){$a$}
\put(22, 18){$v$}
\put(70, 0){;}
\end{picture}
\]
\[
\begin{picture}(80, 60)(0,0)
\qbezier(0, 0)(30, 60)(60, 0)
\qbezier(0, 45)(0, 45)(10, 22)
\qbezier(15, 15)(15, 15)(22, 0)
\put(-8, 35){$a$}
\put(32, 18){$v$}
\put(75, 15){$=$}
\end{picture}
\begin{picture}(80, 60)(-20,0)
\qbezier(0, 0)(30, 60)(60, 0)
\qbezier(60, 45)(60, 45)(50, 22)
\qbezier(45, 15)(45, 15)(38, 0)
\put(62, 35){$a$}
\put(22, 18){$v$}
\put(70, 0){;}
\end{picture}
\]
 
 \item Twists, 
 \[
 \begin{picture}(80, 130)(0,0)
 \qbezier(0, 100)(-5, 110)(-5, 120)
\qbezier(0, 100)(0, 100)(20, 70)
\qbezier(0, 70)(0, 70)(8, 82)
\qbezier(20, 100)(20, 100)(13, 88)
\qbezier(20, 100)(30, 110)(32, 85)
\qbezier(20, 70)(30, 60)(32, 85)

\qbezier(00, 70)(-5, 60)(00, 50)
\put(-15, 60){$v$}

\qbezier(20, 50)(0, 20)(0, 20)
\qbezier(0, 50)(0, 50)(8, 38)
\qbezier(12, 33)(20, 20)(20, 20)
\qbezier(20, 50)(30, 60)(32, 35)
\qbezier(20, 20)(30, 10)(32, 35)
\qbezier(-5, 0)(-5, 10)(0, 20)

\put(55, 60){$=$}
\end{picture}
\begin{picture}(20, 130)(-15,20)
\put(0, 20){\line(0, 1){120}}
\put(5, 80){$v$}
\put(10, 20){.}
\end{picture}
 \]
\end{enumerate}
\end{enumerate}
\end{theorem}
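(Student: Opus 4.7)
The theorem is essentially a presentation of $\RTC$ by generators and relations, analogous to the Freyd--Yetter presentation of the ambient category $S\text{-}\mathbb{RT}\mathbb{ang}$ of coloured tangle diagrams up to regular isotopy. The plan is therefore to verify three things: (i) that the tensor product as defined descends to regular-isotopy classes and is a bifunctor; (ii) that every morphism decomposes as a composition of tensor products of the listed elementary pieces; and (iii) that any two such decompositions of regular-isotopy equivalent diagrams are related by the listed relations.

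For (i), on objects juxtaposition of colour-sequences is well-defined, and on morphisms the horizontal juxtaposition of two diagrams preserves regular isotopy because the isotopies on each side take place in disjoint vertical strips. Bilinearity of $\otimes$ on morphisms is automatic since $\Hom$-spaces are by definition free $R$-modules on regular-isotopy classes of diagrams. Bifunctoriality (the interchange law) reduces to the analogous statement in $S\text{-}\mathbb{RT}\mathbb{ang}$; one merely checks that the restrictions (1) and (2) defining $\RTC$ are preserved by both composition and tensor product, which is immediate from the definitions.

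For (ii), I would fix a generic height function on the plane and, after a small isotopy, arrange that every diagram $D$ has only non-degenerate critical points and that the crossings and critical points of the height function lie at pairwise distinct heights. Slicing $D$ into thin horizontal bands separating these events expresses $D$ as a composition of tensor products of identity strands with single elementary pieces: a local maximum (cap), local minimum (cup), or crossing. The restrictions defining $\RTC$ now force all cups and caps to be $v$-coloured and forbid $m$-$m$ crossings (since two $m$-arcs never cross), so the elementary pieces that can appear are exactly those listed in the theorem.

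Part (iii) is the main obstacle. One must show that two such decompositions of regularly isotopic diagrams are interconvertible by the listed moves. I would invoke the standard Reidemeister-type theorem for regular isotopy of framed tangles (cf.\ \cite{FY}): regular isotopy is generated by planar isotopy, Reidemeister II (relation (a)), Reidemeister III (relation (b)), the zig-zag straightening (relation (c)), the sliding moves (relation (d)), and the writhe-compensating twist (relation (e)). The key point is that this derivation can be carried out entirely within $\RTC$: the restrictions (1) and (2) are preserved under every listed move, and cases of the Reidemeister moves involving two or more $m$-strands are automatically excluded by (2)---which is exactly why the braid relation (b) is stated only when at most one of $a,b,c$ equals $m$. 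Thus no intermediate diagram in a sequence of moves need violate the defining conditions of $\RTC$, and the classical completeness theorem specialises to give the claim.
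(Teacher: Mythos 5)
Your proposal is correct and follows essentially the same route as the paper: the paper simply cites \cite[Theorem 3.5]{FY} and \cite[Lemma 5.3]{RT} (whose proofs are exactly the Morse-decomposition into elementary slices plus the Reidemeister-type completeness theorem for regular isotopy that you sketch), noting that colours merely label components and that one restricts to the subcategory determined by conditions (1) and (2). Your additional remark that the sequence of moves can be carried out without violating those conditions is the one point the paper leaves implicit in its citation, and your treatment of it is consistent with the paper's intent.
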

\begin{proof} This was proved in \cite[Theorem 3.5]{FY} and in \cite[Lemma 5.3]{RT}.  
The result of \cite[Theorem 3.5]{FY} in fact applies to several categories of tangles; the case which covers our theorem  is that of $S-\mathbb{RT}\mathbb{ang}$ with $S=\cC$. Note that \cite[Theorem 3.5]{FY} does not involve colours, but this is not an issue 
as colours merely label components of tangles.  We can also extract our theorem from \cite[Lemma 5.3]{RT} by removing directions of ribbon graphs and forbidding coupons. There is also a direct proof along the lines of \cite[Appendix]{LZ14}.
\end{proof}

\noindent
{\bf Another type of picture}. A second way of representing the category $\RTC$ is as follows.
We depict arcs coloured by $m$ as thick arcs called poles, and arcs coloured by $v$ as thin arcs. 
This way a diagram automatically carries the information about the colours of its arcs, so that we may drop the 
letters for colours from the diagram.  For example, we have the following diagram $(m, v^2, m, v^2)\to (v, m, v, m, v^2)$.
\[
\begin{picture}(120, 100)(-90,0)
{
\linethickness{1mm}
\put(-15, 65){\line(0, 1){35}}
\put(-15, 0){\line(0, 1){60}}
}
\qbezier(-19, 38)(-35, 40)(-30, 100)
\qbezier(-15, 62)(5, 55)(-11, 42)
\qbezier(-15, 62)(-35, 75)(-19, 85)
\qbezier(-11, 86)(-5, 90)(-5, 100)
\qbezier(0, 0)(10, 30)(20, 0)
\end{picture}
\begin{picture}(150, 100)(-20,0)
{
\linethickness{1mm}
\put(-15, 65){\line(0, 1){35}}
\put(-15, 25){\line(0, 1){35}}
\put(-15, 0){\line(0, 1){18}}
}

\qbezier(-19, 38)(-35, 30)(-15, 22)
\qbezier(-15, 62)(5, 55)(-11, 42)
\qbezier(-15, 62)(-35, 75)(-19, 85)
\qbezier(20, 100)(16, 90)(-11, 86)
\qbezier(0, 100)(2, 95)(-11, 90)

\qbezier(-19, 90)(-40, 85)(-38, 50)
\qbezier(-15, 19)(-40, 25)(-38, 50)
\qbezier(-15, 19)(-5, 15)(0, 0)
\qbezier(-15, 22)(15, 15)(20, 0)
\end{picture}
\]
We recover the diagramatics for $B_r$ and $\Gamma_r$ given in Section \ref{sect:diagrams} 
by identifying thin arcs with strings and thick arcs with poles.

\begin{remark}
Given any $A\in\cC^N$, we denote by $\Gamma(A)$ the set of diagrams $A\to  A$ with vertical arcs only. 
Then $\Gamma(A)$ forms a group. In particular  $\Gamma(v^r)\cong B_r$ is the braid group of type $A$, 
and $\Gamma(m, v^r)\cong \Gamma_r$ is the braid group of type $B$.  We will call $\Gamma(A)$ a {\em multi-polar braid group} 
if $A$ has more than one $m$ entry. 
\end{remark}

\subsection{The affine Temperley-Lieb category}
%
%
%
%
%
%

We now introduce a quotient category of $\RTC$ denoted by $\ATLC(q)$, which we refer to as the affine Temperley-Lieb category. 
The objects of this category are the same as those of $\RTC$. Given any two objects $T$ and $B$ in $\ATLC(q)$, 
$\Hom_{\ATLC(q)}(B, T)$ is the quotient space of $\Hom_{\RTC}(B, T)$ obtained by imposing locally the following 
skein relations and free loop removal. 

\noindent Skein relations:
\be\label{eq:skein1}
\begin{aligned}
\begin{picture}(150, 70)(0,0)
\put(-15, 30){$q^{\frac{1}{2}}$}
\qbezier(0, 60)(0, 60)(20, 0)
\qbezier(20, 60)(20, 60)(13, 33)
\qbezier(0, 0)(0, 0)(8, 24)
\put(30, 30){$=$}
\put(50, 30){$q$}
\put(60, 0){\line(0, 1){60}}
\put(80, 0){\line(0, 1){60}}

\put(95, 30){$+$}

\qbezier(120, 0)(135, 50)(150, 0)
\qbezier(120, 60)(135, 10)(150,60)
\put(155, 0){, }
\end{picture}
\end{aligned}
\ee

\be\label{eq:skein2}
\begin{aligned}
\begin{picture}(150, 70)(0,0)
\put(-20, 30){$q^{-\frac{1}{2}}$}
\qbezier(0, 0)(0, 0)(20, 60)
\qbezier(0, 60)(0, 60)(7, 33)
\qbezier(20, 0)(20, 0)(12, 26)

\put(30, 30){$=$}
\put(40, 30){$q^{-1}$}

\put(60, 0){\line(0, 1){60}}
\put(80, 0){\line(0, 1){60}}

\put(95, 30){$+$}

\qbezier(120, 0)(135, 50)(150, 0)
\qbezier(120, 60)(135, 10)(150,60)
\put(155, 0){;}
\end{picture}
\end{aligned}
\ee

\noindent
Free loop removal:
\be\label{eq:flr}
\begin{aligned}
\begin{picture}(100, 60)(20,0)
\put(0, 30){\circle{25}}
\put(35, 25){$=$}
\put(62, 25){$- (q+q^{-1})$.}
\end{picture}
\end{aligned}
\ee

The image in $\ATLC(q)$ of a diagram in $\RTC$ will be  depicted by the same graph, but is understood to obey the above relations.  
The composition of morphisms  in $\ATLC(q)$ is that inherited from $\RTC$. 

We denote by $\TLC(q)$ the full subcategory of $\ATLC(q)$ with objects the sequences
 in which only the symbol $v$ occurs. This will be referred to as the {\em \tl category}.  

\begin{remark} \begin{enumerate}
\item The reason for taking the skein relations with $q^{\pm\frac{1}{2}}$ here, and also in the definition of the 
Temperley-Lieb algebra of type $B$  in Section \ref{sect:TLBC} below  will become clear from Lemma \ref{rem:norm-factors}.
\item The affine Temperley-Lieb category we use here is closely related to that in \cite{GL98}. It is in effect an ``extended'' version
of the category $T^a(q)$ of {\it op. cit.}.
\end{enumerate}
\end{remark}

Consider the morphisms from $m$ to $m$ depicted in \eqref{eq:z12}. 
\begin{eqnarray}\label{eq:z12}
\begin{aligned}
\begin{picture}(160, 70)(-70,0)
\put(-70, 48){$z_1:=$}
{
\linethickness{1mm}
\put(-15, 45){\line(0, 1){55}}
\put(-15, 0){\line(0, 1){40}}
}

\qbezier(-15, 42)(15, 50)(-11, 65)
\qbezier(-15, 42)(-45, 50)(-19, 65)
\put(5, 0){, }
\end{picture}
\begin{picture}(70, 100)(-40,0)
\put(-55, 48){$z_2:=$}
{
\linethickness{1mm}
\put(-15, 65){\line(0, 1){35}}
\put(-15, 25){\line(0, 1){35}}
\put(-15, 0){\line(0, 1){18}}
}

\qbezier(-19, 38)(-35, 30)(-15, 22)
\qbezier(-15, 62)(5, 55)(-11, 42)
\qbezier(-15, 62)(-35, 75)(-19, 85)

\qbezier(-11, 88)(15, 95)(10, 50)
\qbezier(-15, 22)(0, 15)(10, 50)
\put(5, 0){. }
\end{picture}
\end{aligned}
\end{eqnarray}
The following result is clear diagrammatically. 
\begin{lemma} \label{lem:central-ATL} The elements $z_1$ and $z_2$ are central in $\ATLC(q)$ in the following sense.  
Let $D$ be a diagram 
$D: (A_1, m, A_2)\to (B_1, m, B_2)$ in $\ATLC(q)$, where the two $m$'s shown are connected by a thick arc. Then
\[
D(\id_{A_1}\otimes z_i \otimes \id_{A_2}) =  (\id_{B_1}\otimes z_i \otimes \id_{B_2}) D, \quad i=1, 2.
\]
\end{lemma}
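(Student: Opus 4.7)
The plan is to recognize the equality as a consequence of regular isotopy in $\RTC$, which then descends to the quotient $\ATLC(q)$. Concretely, both $D \circ (\id_{A_1} \otimes z_i \otimes \id_{A_2})$ and $(\id_{B_1} \otimes z_i \otimes \id_{B_2}) \circ D$ represent the same underlying topological tangle, namely $D$ decorated with the small loop $z_i$ attached to the thick arc that runs from the middle input to the middle output of $D$ (which exists by the standing hypothesis that the two distinguished $m$'s are connected by a thick arc inside $D$). The only difference between the two sides is whether this decoration is drawn below $D$ or above $D$, and a continuous isotopy that slides $z_i$ along the pole converts one into the other.

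To verify this isotopy combinatorially using the relations of Theorem \ref{thm:tensor-cat}, I would first write $D$ as a composition of elementary generators from part (2) of that theorem, then push $z_i$ upward through $D$ past these generators one at a time. Generators that are disjoint from the pole (caps, cups, $(v,v)$-crossings, and identities on $v$-arcs away from the pole) commute with $z_i$ automatically by bifunctoriality of $\otimes$, since $z_i$ is spatially localized on the pole. Generators that meet the pole are the $(m,v)$- and $(v,m)$-crossings, and the sliding relations in part (3)(d) of Theorem \ref{thm:tensor-cat}, together with the inverse-crossing and braid relations in (3)(a) and (3)(b), allow the thin arcs comprising $z_i$ to be commuted past such crossings.

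The main subtlety lies with $z_2$, whose $v$-loop winds around the pole twice: pushing it past an obstructing $(m,v)$-crossing requires moving both of its pole-crossings past the obstruction in succession and then reassembling, with care that the final configuration coincides with $z_2$ itself rather than a twisted variant. The braid relation (3)(b), combined with the twist relation (3)(e) and the ambient regular isotopy built into $\RTC$, is expected to absorb any residual framing. Crucially, none of the skein or loop-removal relations \eqref{eq:skein1}--\eqref{eq:flr} of $\ATLC(q)$ is required: the equality already holds in $\RTC$ and is then inherited by the quotient $\ATLC(q)$. This matches the authors' remark that the result is diagrammatically clear.
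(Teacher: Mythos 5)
Your proof is correct and is essentially the argument the paper has in mind: the authors simply assert that the lemma "is clear diagrammatically," and your isotopy argument — sliding the loop $z_i$ along the thick arc joining the two distinguished $m$'s, using the interchange law away from the pole and the sliding/braid relations at the pole crossings — is precisely the diagrammatic content being invoked, correctly observing that the identity already holds in $\RTC$ and descends to $\ATLC(q)$.
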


\noindent{\bf Notation}.
Henceforth, for any invertible scalar $t\in R$ (the ground ring) we write $\delta_t :=-(t +t^{-1})$ , and 
for the special case $t=q$, set $\delta:=\delta_q=- (q+q^{-1})$.

\begin{lemma}\label{lem:central-skein-ATL} The following relation holds in $\ATLC(q)$.
\begin{eqnarray*}
\phantom{XXX}
\begin{picture}(110, 100)(-50,0)
\put(-40, 45){$q \delta$}
{
\linethickness{1mm}
\put(-15, 65){\line(0, 1){35}}
\put(-15, 25){\line(0, 1){35}}
\put(-15, 0){\line(0, 1){18}}
}

\qbezier(-19, 38)(-35, 30)(-15, 22)
\qbezier(-15, 62)(5, 55)(-11, 42)
\qbezier(-15, 62)(-35, 75)(-19, 85)
\qbezier(0, 100)(0, 90)(-11, 86)
\qbezier(-15, 22)(0, 15)(0, 0)
\end{picture}
\begin{picture}(120, 100)(-30,0)
\put(-70, 45){$+\ \ \delta z_1$}
{
\linethickness{1mm}
\put(-15, 0){\line(0, 1){24}}
\put(-15, 30){\line(0, 1){70}}
}
\qbezier(-18, 60)(-40, 40)(-15, 27)
\qbezier(-15, 27)(0, 20)(0, 0)
\qbezier(0, 100)(0, 67)(-12, 64)
\end{picture}
\begin{picture}(50, 100)(-15,0)
\put(-80, 45){$=\ (q z_2+z_1^2)$}
{
\linethickness{1mm}
\put(-5, 0){\line(0, 1){100}}
}
\put(15, 0){\line(0, 1){100}}
\put(20, 0){,}
\end{picture}
\end{eqnarray*}
where the right hand side should be understood as $(q z_2+z_1^2)\otimes\id_v$.
\end{lemma}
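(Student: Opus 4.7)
The identity is an equation in $\End_{\ATLC(q)}((m,v))$. Inspecting the diagrams, the first left-hand term is $q\delta$ times the double wrap $x_1^2$ (the $v$-strand wrapping twice around the pole), the second is $\delta z_1$ times the single wrap $x_1$, and the right-hand side consists of the central $m\to m$ morphism $qz_2+z_1^2$ tensored with $\id_v$. A key topological observation that motivates the proof: the first LHS diagram is literally $z_2$ with its closed right-hand loop ``opened up'' into a pair of endpoints at the top and bottom of the vertical $v$-strand.

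My plan is to apply the skein relation \eqref{eq:skein1} to a $v$-$v$ crossing created by a planar isotopy of the double-wrap diagram. A Reidemeister-II-type isotopy redraws $x_1^2$ so that the $v$-strand passes in front of itself at exactly one point, introducing a single $v$-$v$ self-crossing while preserving the four pole-crossings. Applying \eqref{eq:skein1} expands this self-crossing as $q^{1/2}\id + q^{-1/2}\cup\cap$. The identity term is then isotopic to a single wrap $x_1$ with a closed $v$-loop around the pole detached, i.e., to $z_1\cdot x_1$; the cup-cap term smooths the two wraps into a single closed loop threading through both pole-breaks (which is $z_2$ by its definition \eqref{eq:z12}) together with the vertical $v$-strand contributing $\id_v$. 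Clearing the $q^{1/2}$ factors and applying the free loop removal relation \eqref{eq:flr} once to absorb a detached loop into the scalar $\delta=-(q+q^{-1})$ produces the equation. The $z_1^2$ summand on the right arises when the smoothing separates into two independent $z_1$-loops rather than a single $z_2$-loop; the centrality of $z_1,z_2$ from \lemref{lem:central-ATL} is used throughout to commute these central morphisms past the wraps.

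The principal obstacle I anticipate is the diagrammatic bookkeeping. One must identify the isotopy that produces exactly one $v$-$v$ self-crossing (not zero, not two), and then distinguish carefully in the smoothed configurations between a single closed loop threading through both pole-breaks (contributing $z_2$) and two independent closed loops each threading through one break (contributing $z_1^2$). This topological distinction between the ``linked'' and ``unlinked'' smoothings is precisely what separates the right-hand side into its two summands $qz_2$ and $z_1^2$, and accounting for it correctly is what makes the numerical coefficients $q\delta$ and $\delta z_1$ come out.
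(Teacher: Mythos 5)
There is a genuine gap, and it lies in the central mechanism of your argument: the claim that a ``Reidemeister-II-type isotopy'' redraws $x_1^2$ so as to introduce \emph{exactly one} $v$--$v$ self-crossing. In $\RTC$ morphisms are tangles up to \emph{regular} isotopy (Theorem \ref{thm:tensor-cat}), i.e.\ only Reidemeister II and III are available, and these preserve the writhe of each component; a Reidemeister II move creates self-crossings in cancelling \emph{pairs}. The double wrap $x_1^2=L\circ L$ has no $v$--$v$ self-crossings, so it is not regularly isotopic to any diagram with a single one — that would be a Reidemeister I move, which is precisely what is excluded here (indeed the paper computes the kink/twist to be $-q^{\pm 3/2}\id_V$, not $\id_V$, in the proof of Theorem \ref{thm:RT}). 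Even setting this aside, resolving one crossing via \eqref{eq:skein1} produces only a two-term expansion, whereas the identity to be proved involves four distinct diagram types ($x_1^2$, $z_1x_1$, $z_2\ot\id_v$, $z_1^2\ot\id_v$) with the factor $\delta$ attached to \emph{both} left-hand terms and to \emph{neither} right-hand term; a single application of free loop removal, as you propose, cannot distribute $\delta$ in this asymmetric way.

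The actual source of the relation is not a self-crossing resolution of $x_1^2$ at all, but the commutation $\hat x_1\hat x_2=\hat x_2\hat x_1$ of the two winding elements in $\Hom_{\ATLC(q)}((m,v^2),(m,v^2))$ (the diagrammatic counterpart of $x_1x_2=x_2x_1$, cf.\ \eqref{eq:constraints} and Lemma \ref{lem:extra-inv}). The paper multiplies this two-strand identity by $q$, resolves the genuine $v$--$v$ crossings between the \emph{two different} strands using \eqref{eq:skein1}--\eqref{eq:skein2} (the two sides of the commutator are resolved with the two opposite skein relations, which is where the four terms come from), and only then closes the two strands into one by composing with a cap and bending an endpoint back to the top as in Lemma \ref{lem:hom-iso}. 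It is this final capping step that produces the $\delta$'s on the left (via $e^2=\delta e$-type contractions) and converts the closed components into $z_1$ and $z_2$. Your intuition that the $qz_2$ versus $z_1^2$ dichotomy reflects ``linked'' versus ``unlinked'' closed loops is correct and does appear after the capping, but without the two-strand relation as the starting point the argument has no engine. To repair the proof, start from $\hat x_1\hat x_2=\hat x_2\hat x_1$ rather than from $x_1^2$.
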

\begin{proof} We prove the relation diagrammatically.   Let 
\[
\begin{picture}(120, 100)(-30,0)
\put(-55, 45){$\hat{x}_1=$}
{
\linethickness{1mm}
\put(-15, 0){\line(0, 1){25}}
\put(-15, 30){\line(0, 1){60}}
}
\qbezier(-18, 60)(-35, 45)(-15, 28)
\qbezier(-15, 28)(0, 16)(0, 0)
\qbezier(0, 90)(0, 67)(-12, 64)
\put(10, 0){\line(0, 1){90}}
\put(15, 0){, }
\end{picture}
\begin{picture}(80, 100)(-30,0)
\put(-55, 45){$\hat{x}_2=$}
{
\linethickness{1mm}
\put(-15, 0){\line(0, 1){25}}
\put(-15, 30){\line(0, 1){60}}
}
\put(-5, 0){\line(0, 1){17}}
\put(-5, 22){\line(0, 1){68}}
\put(-5, 29){\line(0, 1){61}}
\qbezier(-18, 60)(-35, 30)(0, 18)
\qbezier(0, 18)(8, 15)(10, 0)
\qbezier(10, 90)(10, 72)(-3, 70)
\qbezier(-7, 68)(-8, 67)(-12, 64)
\put(15, 0){.}
\end{picture}
\]
Then $\hat{x}_1 \hat{x}_2= \hat{x}_2 \hat{x}_1$.  Multiplying this by $q$ and using the skein relation, we obtain  
\[
\begin{picture}(120, 100)(-90,0)
\put(-40, 48){$q$}
{
\linethickness{1mm}
\put(-15, 65){\line(0, 1){35}}
\put(-15, 25){\line(0, 1){35}}
\put(-15, 0){\line(0, 1){18}}
}

\qbezier(-19, 38)(-35, 30)(-15, 22)
\qbezier(-15, 62)(5, 55)(-11, 42)
\qbezier(-15, 62)(-35, 75)(-19, 85)
\qbezier(-11, 86)(-5, 90)(-5, 100)
\qbezier(-15, 22)(10, 15)(10, 100)
\qbezier(-5, 0)(3, 35)(10, 0)
\end{picture}
\begin{picture}(120, 100)(-45,0)
\put(-50, 48){$+$}
{
\linethickness{1mm}
\put(-15, 65){\line(0, 1){35}}
\put(-15, 25){\line(0, 1){35}}
\put(-15, 0){\line(0, 1){18}}
}

\qbezier(-19, 48)(-50, 30)(-15, 20)
\qbezier(-15, 20)(25, 30)(-11, 48)
\qbezier(-15, 62)(5, 55)(10, 100)
\qbezier(-15, 62)(-35, 75)(-19, 85)

\qbezier(-5, 0)(3, 35)(10, 0)
\qbezier(-11, 86)(-5, 90)(-5, 100)
\end{picture}
\begin{picture}(50, 110)(-10,0)
\put(-60, 48){$=\  \ q $}
{
\linethickness{1mm}
\put(-15, 65){\line(0, 1){45}}
\put(-15, 25){\line(0, 1){35}}
\put(-15, 0){\line(0, 1){18}}
}

\qbezier(-19, 38)(-35, 30)(-15, 22)
\qbezier(-15, 62)(5, 55)(-11, 42)
\qbezier(-15, 62)(-35, 75)(-19, 85)
\qbezier(-5, 110)(3, 80)(10, 110)
\qbezier(-11, 88)(15, 95)(10, 50)
\qbezier(10, 0)(10, 15)(10, 50)

\qbezier(-15, 22)(-5, 15)(-5, 0)
\end{picture}
\begin{picture}(150, 70)(-50,0)
\put(-60, 45){$+$}
{
\linethickness{1mm}
\put(-15, 65){\line(0, 1){35}}
\put(-15, 25){\line(0, 1){35}}
\put(-15, 0){\line(0, 1){18}}
}

\qbezier(-19, 45)(-45, 30)(-15, 20)

\qbezier(-11, 47)(8, 60)(10, 0)

\qbezier(-15, 20)(-5, 20)(-5, 0)
\qbezier(-15, 62)(15, 70)(-11, 85)
\qbezier(-15, 62)(-45, 70)(-19, 85)
\qbezier(-5, 100)(3, 70)(10, 100)
\put(15, 0){.}
\end{picture}
\]
Composing this equation with 
$
\begin{picture}(50, 50)(-10,20)
{
\linethickness{1mm}
\put(0, 10){\line(0, 1){30}}
}

\qbezier(10, 10)(15, 40)(20, 10)
\end{picture}
$
on the top,  and then pulling  the right end point \\

\vspace{1mm}
\noindent 
of the string in the resulting diagram to the top, we obtain the stated relation.
%
%
%
\end{proof}

\begin{definition}\label{def:ext-ATL}
For any integer $r\geq 0$, the space $\Hom_{\ATLC(q)}((m, v^r), (m, v^r))$ is an associative algebra with 
multiplication given by the composition of morphisms.  We denote this algebra by $\HTL^{ext}_r(q)$, and 
call it the extended affine Temperley-Lieb algebra. 
\end{definition}

Observe that the algebra $\HTL^{ext}_r(q)$ defined above
contains the elements $z_1\otimes\id_{v^r},  z_2\otimes\id_{v^r}$ depicted in \eqref{eq:central-zs}, 
which are central by Lemma \ref{lem:central-ATL}. 
\begin{eqnarray}\label{eq:central-zs}
\begin{aligned}
\begin{picture}(200, 70)(-70,0)
\put(-100, 48){$z_1\otimes\id_{v^r}:=$}
{
\linethickness{1mm}
\put(-15, 45){\line(0, 1){55}}
\put(-15, 0){\line(0, 1){40}}
}

\qbezier(-15, 42)(15, 50)(-11, 65)
\qbezier(-15, 42)(-45, 50)(-19, 65)


\put(15, 0){\line(0, 1){100}}
\put(35, 0){\line(0, 1){100}}
\put(20, 45){...}
\put(20, 10){$r$}
\put(40, 0){, }
\end{picture}
\begin{picture}(70, 110)(-40,0)
\put(-85, 48){$z_2\otimes\id_{v^r}:=$}
{
\linethickness{1mm}
\put(-15, 65){\line(0, 1){45}}
\put(-15, 25){\line(0, 1){35}}
\put(-15, 0){\line(0, 1){18}}
}

\qbezier(-19, 38)(-35, 30)(-15, 22)
\qbezier(-15, 62)(5, 55)(-11, 42)
\qbezier(-15, 62)(-35, 75)(-19, 85)
\put(20, 0){\line(0, 1){110}}

\qbezier(-11, 88)(15, 95)(10, 50)
\qbezier(-15, 22)(0, 15)(10, 50)
\put(25, 45){...}
\put(25, 10){$r$}
\put(40, 0){\line(0, 1){110}}
\put(45, 0){. }
\end{picture}
\end{aligned}
\end{eqnarray}

The following fact is an obvious consequence of Lemma \ref{lem:central-skein-ATL}. 
\begin{lemma} \label{lem:central-skein}
The following relation holds in $\HTL^{ext}_r(q)$.
\begin{eqnarray*}
\phantom{XXX}
\begin{picture}(150, 100)(-50,0)
\put(-40, 45){$q \delta$}
{
\linethickness{1mm}
\put(-15, 65){\line(0, 1){35}}
\put(-15, 25){\line(0, 1){35}}
\put(-15, 0){\line(0, 1){18}}
}

\qbezier(-19, 38)(-35, 30)(-15, 22)
\qbezier(-15, 62)(5, 55)(-11, 42)
\qbezier(-15, 62)(-35, 75)(-19, 85)
\qbezier(0, 100)(0, 90)(-11, 86)
\qbezier(-15, 22)(0, 15)(0, 0)
\put(15, 0){\line(0, 1){100}}
\put(20, 45){...}
\put(35, 0){\line(0, 1){100}}
\put(55, 45){$+$}
\end{picture}
\begin{picture}(150, 100)(-30,0)
\put(-50, 45){$\delta z_1$}
{
\linethickness{1mm}
\put(-15, 0){\line(0, 1){24}}
\put(-15, 30){\line(0, 1){70}}
}
\qbezier(-18, 60)(-40, 40)(-15, 27)
\qbezier(-15, 27)(0, 20)(0, 0)
\qbezier(0, 100)(0, 67)(-12, 64)
\put(15, 0){\line(0, 1){100}}
\put(35, 0){\line(0, 1){100}}
\put(20, 45){...}
\end{picture}
\begin{picture}(150, 100)(-15,0)
\put(-80, 45){$=\ (q z_2+z_1^2)$}
{
\linethickness{1mm}
\put(-5, 0){\line(0, 1){100}}
}
\put(15, 0){\line(0, 1){100}}
\put(35, 0){\line(0, 1){100}}
\put(20, 45){...}
\put(40, 0){,}
\end{picture}
\end{eqnarray*}
where the right hand side should be understood as $(q z_2+z_1^2)\otimes\id_{v^r}$. 
\end{lemma}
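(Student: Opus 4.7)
The plan is to observe that this lemma is literally the image of the identity proved in \lemref{lem:central-skein-ATL} under the tensor product functor $-\otimes\id_{v^{r-1}}$, interpreted inside the endomorphism algebra $\HTL^{ext}_r(q) = \Hom_{\ATLC(q)}((m,v^r),(m,v^r))$. So the strategy is to lift the two-object identity of \lemref{lem:central-skein-ATL} (which lives in $\Hom_{\ATLC(q)}((m,v),(m,v))$) to $\HTL^{ext}_r(q)$ by juxtaposition with $r-1$ trivial vertical strands.

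More precisely, first I would recall from \thmref{thm:tensor-cat}(1) that $\RTC$ (and hence its quotient $\ATLC(q)$) carries a bifunctorial tensor product which is bilinear on morphisms. Given any relation $f=g$ among morphisms in $\Hom_{\ATLC(q)}(A,B)$, tensoring on the right with $\id_{v^{r-1}}$ produces an identity $f\otimes \id_{v^{r-1}} = g\otimes \id_{v^{r-1}}$ in $\Hom_{\ATLC(q)}((A,v^{r-1}),(B,v^{r-1}))$. I would then apply this with $A=B=(m,v)$ and $f,g$ the two sides of the identity in \lemref{lem:central-skein-ATL}. Each diagram in that identity, when tensored with $\id_{v^{r-1}}$, becomes precisely the diagram appearing in \lemref{lem:central-skein}, because the additional $r-1$ vertical strands drawn on the right of each picture in the statement are exactly $\id_{v^{r-1}}$.

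The only point to verify is that the central elements appearing on the right hand side of the identity in $\HTL^{ext}_r(q)$ are indeed $(q z_2+z_1^2)\otimes \id_{v^r}$ rather than some modification. This follows from the definition \eqref{eq:central-zs}: the elements $z_1\otimes\id_{v^r}$ and $z_2\otimes\id_{v^r}$ are, by construction, the tensor products with $\id_{v^r}$ of the morphisms $z_1,z_2:m\to m$ defined in \eqref{eq:z12}, and the product $z_1^2$ in $\End_{\ATLC(q)}(m)$ tensors compatibly into $\HTL^{ext}_r(q)$. Combined with the centrality statement of \lemref{lem:central-ATL}, which ensures these elements do commute with the other generators of $\HTL^{ext}_r(q)$ and so can be written as scalars times the identity up to a vertical strand insertion, the result follows immediately.

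There is no real obstacle here; the content of the lemma is entirely in \lemref{lem:central-skein-ATL}, and the present statement merely records its stabilised version in the endomorphism algebra $\HTL^{ext}_r(q)$. The mildly non-trivial bookkeeping is matching the pictures in the two statements, but this is a direct visual check: compare the left--hand $q\delta$ term in \lemref{lem:central-skein-ATL} with the $q\delta$ term here and observe that the only difference is the $r-1$ extra vertical $v$-strands on the right; similarly for the $\delta z_1$ term and the right-hand side.
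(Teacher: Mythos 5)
Your proposal is correct and matches the paper's own treatment: the paper states this lemma as "an obvious consequence of Lemma \ref{lem:central-skein-ATL}," i.e.\ precisely the tensoring of that identity with the extra identity strands, which is the argument you spell out. No issues.
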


Recall the affine Temperley-Lieb algebra $\HTL_r(q)$ which was defined in \eqref{eq:defhtl}. The next statement is easily verified
by checking that the relations  \eqref{eq:extra-inv} which hold in $\HTL_r(q)$ are preserved by $\wp$ using  Lemma \ref{lem:central-skein}.

\begin{lemma} There is an injection $\wp: \HTL_r(q)\to\HTL^{ext}_r(q)$ of algebras  given by 
\[
\begin{picture}(150, 90)(-15,-5)
\put(-40, 40){$e_i\mapsto$}
{
\linethickness{1mm}
\put(-5, 0){\line(0, 1){80}}
}
\put(15, 0){\line(0, 1){80}}
\put(35, 0){\line(0, 1){80}}
\qbezier(55, 0)(65, 70)(75, 0)
\qbezier(55, 0)(65, 70)(75, 0)
\qbezier(55, 80)(65, 10)(75, 80)
\put(55, -8){$i$}
\put(95, 0){\line(0, 1){80}}
\put(115, 0){\line(0, 1){80}}
\put(20, 45){...}
\put(100, 45){...}
\put(120, 0){,}
\end{picture}
\begin{picture}(150, 90)(-80,-5)
\put(-65, 40){$x_1\mapsto$}
{
\linethickness{1mm}
\put(-15, 0){\line(0, 1){24}}
\put(-15, 30){\line(0, 1){50}}
}
\qbezier(-18, 60)(-40, 40)(-15, 27)
\qbezier(-15, 27)(0, 20)(0, 0)
\qbezier(0, 80)(0, 67)(-12, 64)
\put(15, 0){\line(0, 1){80}}
\put(75, 0){\line(0, 1){80}}
\put(25, 45){............}
\put(80, 0){,}
\end{picture}
\]
for $i=1, 2, \dots, r-1$.
\end{lemma}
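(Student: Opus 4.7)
The plan is to define $\wp$ on the generators $e_i$ and $x_1^{\pm 1}$ of $\HTL_r(q)$ as displayed, verify that it respects the defining relations \eqref{eq:atlrel} when its images are regarded as morphisms in $\ATLC(q)$, and then confirm injectivity by a basis argument.

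First I would dispatch the relations involving only the $e_i$'s. The commutation $e_i e_j = e_j e_i$ for $|i-j|>1$ is immediate, since the corresponding cap-cup diagrams have disjoint support and commute via the sliding and regular isotopy relations of Theorem \ref{thm:tensor-cat}. The relation $e_i^2 = -(q+q^{-1})e_i$ is a direct application of the free loop removal \eqref{eq:flr}, and $e_ie_{i\pm 1}e_i = e_i$ is the standard ``pin-untangling'' under regular isotopy. The recursion $x_{i+1} = (q+e_i)x_i(q+e_i)$ serves as the definition of $\wp(x_{i+1})$; using the fact that $q + \wp(e_i)$ equals $q^{1/2}$ times the positive crossing on strands $i,i+1$ (a rewriting of \eqref{eq:skein1}), a short induction identifies $\wp(x_i)$, up to a scalar factor of $q^{i-1}$, with the diagram in which the $i$-th strand wraps once around the pole.

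The main step is the commutation $x_i x_j = x_j x_i$. By the recursion this reduces to $\wp(x_1)\wp(x_2) = \wp(x_2)\wp(x_1)$, which (given the other relations of \eqref{eq:atlrel}) is equivalent to \eqref{eq:constraints} and entails in particular \eqref{eq:extra-inv}. Here I would invoke Lemma \ref{lem:central-skein}: composing its identity with $\wp(e_1)$ on top, and using the centrality of $z_1, z_2$ (Lemma \ref{lem:central-ATL}), the loop relation $\wp(e_1)^2 = \delta \wp(e_1)$, and standard diagrammatic simplifications, I would show that the central terms on the right collapse into multiples of $\wp(e_1 x_1^2 e_1)$ and $\wp(e_1 x_1 e_1 x_1 e_1)$, reproducing exactly the image of the first equality of \eqref{eq:extra-inv}. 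Composition with $\wp(e_1)$ on the bottom yields the symmetric half, completing the verification of the defining relations.

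For injectivity, a $\CK_0$-basis of $\HTL_r(q)$ (for instance the cellular basis of \cite{GL03}, or a normal form in the $e_i$'s and $x_1^{\pm k}$'s) maps under $\wp$ to a family of regularly non-isotopic diagrams in $\HTL^{ext}_r(q)$, hence to a $\CK_0$-linearly independent set. The principal obstacle throughout is the bookkeeping in the main step: the central generators $z_1$ and $z_2$ appear in several configurations after composition with the cap-cup $\wp(e_1)$, and one must verify by careful diagrammatic calculation that the $z$-contributions reassemble precisely into the pattern dictated by \eqref{eq:extra-inv}, with no residual central terms left over.
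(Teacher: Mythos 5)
Your overall strategy coincides with the paper's: the authors dispose of this lemma in a single sentence, saying it is "easily verified by checking that the relations \eqref{eq:extra-inv} ... are preserved by $\wp$ using Lemma \ref{lem:central-skein}", and your verification of the $e_i$-relations, your identification of $\wp(x_{i+1})$ via the recursion, and your injectivity sketch are all at the level of detail the paper intends.

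There is, however, one logical slippage in your main step. The defining relation you must preserve is $x_1x_2=x_2x_1$, which is equivalent to \eqref{eq:constraints}; the relation \eqref{eq:extra-inv} is strictly weaker, being \eqref{eq:constraints} multiplied on the left (resp.\ right) by the non-invertible idempotent-like element $e_1$. Composing the identity of Lemma \ref{lem:central-skein} with $\wp(e_1)$, as you propose, can therefore only ever recover the image of \eqref{eq:extra-inv}, and your concluding claim that this "completes the verification of the defining relations" does not follow. Fortunately the needed relation is immediate by a different and much simpler route, which the paper uses implicitly at the start of the proof of Lemma \ref{lem:central-skein-ATL}: the diagrams $\hat x_1$ and $\hat x_2$ (the first and second strands each wrapping once around the pole) are related by a regular isotopy when composed in either order, so $\wp(x_1)\wp(x_2)=\wp(x_2)\wp(x_1)$ holds on the nose in $\RTC$, before any skein relations are imposed; the general commutations $\wp(x_i)\wp(x_j)=\wp(x_j)\wp(x_i)$ follow in the same way. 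With that substitution your argument is complete on well-definedness. On injectivity, the paper offers no argument at all; your plan of mapping a normal-form basis of $\HTL_r(q)$ to pairwise non-isotopic diagrams is reasonable, but note that non-isotopic diagrams need not be linearly independent in the quotient by the skein and loop relations, so you would still need to exhibit a spanning set of reduced diagrams for $\Hom_{\ATLC(q)}((m,v^r),(m,v^r))$ against which independence can be read off.
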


\begin{remark}
The extended affine Temperley-Lieb algebra $\HTL^{ext}_r(q)$ is strictly bigger than the affine 
Temperley-Lieb algebra $\HTL_r(q)$, as  $z_1\otimes\id_{v^r}$ and $z_2\otimes\id_{v^r}$ do not belong to $\wp(\HTL_r(q))$. It is easy to see 
that $\HTL^{ext}_r(q)$ is generated by  $z_1\otimes\id_{v^r},   z_2\otimes\id_{v^r}$ and $\wp(\HTL_r(q))$.
\end{remark}

\begin{remark} \label{rem:skein-universal} Lemma \ref{lem:central-skein} may be thought as a ``skein relation'' over the centre of $\HTL_r^{ext}(q)$.  
In any irreducible representation of $\HTL^{ext}_r(q)$, 
the central elements $z_1\otimes\id_{v^r}$ and $z_2\otimes\id_{v^r}$ act as scalars, and the algebra therefore acts through a quotient in which
a skein relation in the usual sense is satisfied. 
\end{remark}

\subsection{A quotient category of the affine Temperley-Lieb category}

We next introduce a two-parameter family of quotients of the affine Temperley-Lieb category $\ATLC(q)$, 
which are obtained by imposing conditions on the removal of tangled loops. 
The morphisms $z_1$ and $z_2$ from $m$ to $m$ in the category $\ATLC(q)$ are as defined in \eqref{eq:z12}.

\begin{definition} 
Given scalars $a_1, a_2$,  let $\ATLC(q, a_1, a_2)$ be the category whose objects are the same as those of $\ATLC(q)$ 
and whose modules of morphisms are obtained from those in $\ATLC(q)$ by 
imposing all relations which are consequences of the following two equalities.
\[
\begin{picture}(160, 70)(0,0)
{
\linethickness{1mm}
\put(-15, 45){\line(0, 1){55}}
\put(-15, 0){\line(0, 1){40}}
}

\qbezier(-15, 42)(15, 50)(-11, 65)
\qbezier(-15, 42)(-45, 50)(-19, 65)
\put(15, 45){$=\ \  a_1$ }
{
\linethickness{1mm}
\put(55, 0){\line(0, 1){100}}
\put(60, 0){,}
}
\end{picture}
\begin{picture}(70, 100)(0,0)
{
\linethickness{1mm}
\put(-15, 65){\line(0, 1){35}}
\put(-15, 25){\line(0, 1){35}}
\put(-15, 0){\line(0, 1){18}}
}

\qbezier(-19, 38)(-35, 30)(-15, 22)
\qbezier(-15, 62)(5, 55)(-11, 42)
\qbezier(-15, 62)(-35, 75)(-19, 85)

\qbezier(-11, 88)(15, 95)(10, 50)
\qbezier(-15, 22)(0, 15)(10, 50)

\put(15, 45){$=\ \  a_2$ }
{
\linethickness{1mm}
\put(55, 0){\line(0, 1){100}}
}
\put(60, 0){. }
\end{picture}
\] 
We refer to $\ATLC(q, a_1, a_2)$ as the {\em multi-polar \tl category with two parameters}.
\end{definition}

\begin{definition}\label{def:tlbc-2p}
Denote by $\TLBC(q, a_1, a_2)$ the full subcategory of $\ATLC(q, a_1, a_2)$ 
with objects of the form $(m, v^r):=(m, \underbrace{v, \dots, v}_r)$ for all $r\in \Z_{\ge 0}$.
This is the {\em two-parameter \tl category of type $B$ }.
 \end{definition}

It is then clear that in the quotient, we have the following relation. 
\begin{lemma}\label{lem:skein-a-a} The following skein relation holds in $\ATLC(q, a_1, a_2)$. 
\begin{eqnarray*}
\phantom{XXX}
\begin{picture}(110, 100)(-50,0)
\put(-40, 45){$q \delta$}
{
\linethickness{1mm}
\put(-15, 65){\line(0, 1){35}}
\put(-15, 25){\line(0, 1){35}}
\put(-15, 0){\line(0, 1){18}}
}

\qbezier(-19, 38)(-35, 30)(-15, 22)
\qbezier(-15, 62)(5, 55)(-11, 42)
\qbezier(-15, 62)(-35, 75)(-19, 85)
\qbezier(0, 100)(0, 90)(-11, 86)
\qbezier(-15, 22)(0, 15)(0, 0)
\end{picture}
\begin{picture}(120, 100)(-30,0)
\put(-70, 45){$+\ \ \delta a_1$}
{
\linethickness{1mm}
\put(-15, 0){\line(0, 1){24}}
\put(-15, 30){\line(0, 1){70}}
}
\qbezier(-18, 60)(-40, 40)(-15, 27)
\qbezier(-15, 27)(0, 20)(0, 0)
\qbezier(0, 100)(0, 67)(-12, 64)
\end{picture}
\begin{picture}(50, 100)(-15,0)
\put(-80, 45){$=\ (q a_2+a_1^2)$}
{
\linethickness{1mm}
\put(-5, 0){\line(0, 1){100}}
}
\put(15, 0){\line(0, 1){100}}
\put(20, 0){,}
\end{picture}
\end{eqnarray*}
\end{lemma}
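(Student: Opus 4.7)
The plan is to observe that Lemma \ref{lem:skein-a-a} is essentially the image of Lemma \ref{lem:central-skein-ATL} under the canonical quotient functor
\[
\Pi: \ATLC(q) \longrightarrow \ATLC(q, a_1, a_2).
\]
By the very definition of $\ATLC(q, a_1, a_2)$, the functor $\Pi$ sends the morphisms $z_1, z_2 \in \Hom_{\ATLC(q)}(m,m)$ to $a_1 \id_m$ and $a_2 \id_m$ respectively. Hence the relation established in Lemma \ref{lem:central-skein-ATL}, which lives in $\Hom_{\ATLC(q)}((m,v),(m,v))$, descends directly.

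Concretely, I would proceed as follows. First, note that the two diagrams on the left-hand side of the asserted identity are literally the images under $\Pi$ of the corresponding diagrams on the left-hand side of Lemma \ref{lem:central-skein-ATL}, since both use only the tangle generators (and $\Pi$ is the identity on objects and on diagrammatic generators of morphisms). Second, the right-hand side of Lemma \ref{lem:central-skein-ATL} is $(qz_2 + z_1^2)\otimes \id_v$, where $z_1^2 = z_1 \circ z_1 \in \End_{\ATLC(q)}(m)$. Applying $\Pi$ and using that $\Pi(z_i) = a_i \id_m$ together with functoriality (so that $\Pi(z_1^2) = a_1^2 \id_m$), this right-hand side maps to $(qa_2 + a_1^2)\id_m \otimes \id_v = (qa_2 + a_1^2)\id_{(m,v)}$, which is exactly the right-hand side of the claimed relation.

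There is no substantive obstacle: the content is entirely carried by Lemma \ref{lem:central-skein-ATL}, and the present statement is a formal consequence of the definition of the quotient category. The only point that warrants explicit mention is the functoriality used to conclude $\Pi(z_1^2) = a_1^2 \id_m$, i.e.\ that the scalar $a_1$ is central in the ground ring $\CK_0$ so that it commutes past $\Pi(z_1)$. Thus the proof reduces to a single sentence appealing to Lemma \ref{lem:central-skein-ATL} and the defining identifications $z_1 = a_1 \id_m$, $z_2 = a_2 \id_m$ in $\ATLC(q,a_1,a_2)$.
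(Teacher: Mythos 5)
Your proposal is correct and matches the paper's own reasoning: the paper introduces this lemma with the phrase ``It is then clear that in the quotient\dots'', i.e.\ it likewise treats the statement as the image of Lemma \ref{lem:central-skein-ATL} under the quotient functor that sends $z_1\mapsto a_1\id_m$ and $z_2\mapsto a_2\id_m$. Your extra remark about functoriality giving $z_1^2\mapsto a_1^2\id_m$ is a harmless elaboration of the same argument.
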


\begin{definition} For any positive integer $r$, let $\TLB_r(q, a_1, a_2)$ be the associative algebra with underlying 
vector space $\Hom_{\ATLC(q, a_1, a_2)}((m, v^r), (m, v^r))$ and multiplication given by composition of 
morphisms. The algebra $\TLB_r(q, a_1, a_2)$ will be referred to as a two-parameter Temperley-Lieb algebra of type $B$.  
\end{definition}

The following statements are easy consequences of Lemma \ref{lem:central-skein}.  

\begin{lemma} 
\begin{enumerate}
\item  Let $J_{a_1, a_2}$ be the two-sided ideal of $\HTL^{ext}_r(q)$ generated by the elements
 $z_1\otimes\id_{v^r}-a_1$ and $z_2\otimes\id_{v^r}-a_2$ (see \eqref{eq:central-zs}).  Then 
\[
\TLB_r(q, a_1, a_2) = \HTL^{ext}_r(q)/J_{a_1, a_2}.
\]
Write $\rho^{ext}: \HTL^{ext}_r(q) \longrightarrow \TLB_r(q, a_1, a_2) $ for the canonical surjection. 

\item There exists a surjective homomorphism $\rho: \HTL_r(q)\longrightarrow \TLB_r(q, a_1, a_2)$  of algebras defined 
as the the following composition: 
\[ 
\HTL_r(q)\stackrel{\wp}\longrightarrow\HTL^{ext}_r(q)\stackrel{\rho^{ext}}\longrightarrow \TLB_r(q, a_1, a_2). 
\]
\item Each simple finite dimensional $\HTL_r(q)$-module $N$ is a pull-back by $\rho$ of  a simple $\TLB_r(q, a_1, a_2)$-module 
for unique values of $a_1$ and $a_2$ which depend on $N$. 
\end{enumerate}
\end{lemma}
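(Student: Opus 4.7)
The plan is to handle the three parts in order: (1) and (2) are formal consequences of the categorical definition of $\TLB_r(q,a_1,a_2)$ together with the generation of $\HTL^{ext}_r(q)$ recorded in the remark preceding the lemma, while (3) will follow from the skein relation of Lemma \ref{lem:central-skein} via Schur's lemma.

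For (1), by Definition \ref{def:tlbc-2p} one has $\TLB_r(q,a_1,a_2) = \Hom_{\ATLC(q,a_1,a_2)}((m,v^r),(m,v^r))$, and $\ATLC(q,a_1,a_2)$ is the quotient of $\ATLC(q)$ by the local morphism-level equalities $z_i = a_i\,\id_m$. Tensoring with $\id_{v^r}$ converts these into the relations $z_i \otimes \id_{v^r} = a_i$ inside $\HTL^{ext}_r(q)$. By the centrality of $z_i \otimes \id_{v^r}$ established in Lemma \ref{lem:central-ATL}, any occurrence of $z_i$ inside a diagrammatic endomorphism of $(m,v^r)$ may be slid into tensor-factor position and factored as $D_+ \circ (z_i \otimes \id_{v^r}) \circ D_-$, so the induced relations are precisely those in the two-sided ideal $J_{a_1,a_2}$. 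This yields $\TLB_r(q,a_1,a_2) \cong \HTL^{ext}_r(q)/J_{a_1,a_2}$. For (2), $\rho = \rho^{ext}\circ\wp$ is a composition of algebra maps, hence an algebra homomorphism. Surjectivity uses the remark that $\HTL^{ext}_r(q)$ is generated by $\wp(\HTL_r(q))$ together with the two central elements $z_1\otimes\id_{v^r}, z_2\otimes\id_{v^r}$: under $\rho^{ext}$ these become the scalars $a_1, a_2$, which already lie in $\rho^{ext}(\wp(\HTL_r(q)))$.

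For (3), let $N$ be a simple finite-dimensional $\HTL_r(q)$-module. The strategy is to extend the $\HTL_r(q)$-action on $N$ to an $\HTL^{ext}_r(q)$-action by decreeing that the central elements $z_i\otimes\id_{v^r}$ act as scalars $a_i$; such an extension preserves simplicity, and Schur's lemma then forces the pair $(a_1,a_2)$ to be uniquely determined. To produce the extension, I read Lemma \ref{lem:central-skein} as an identity in $\HTL^{ext}_r(q)$ of the form $q\delta\,\wp(d_1) + \delta(z_1\otimes\id_{v^r})\,\wp(d_2) = q(z_2\otimes\id_{v^r}) + (z_1\otimes\id_{v^r})^2$, where $d_1, d_2 \in \HTL_r(q)$ are specific elements extractable from the two left-hand diagrams; in particular $d_2$ is the image of the translation generator $x_1$. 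Setting $z_i\otimes\id_{v^r} = a_i$ on $N$ imposes the equation $q\delta\,d_1|_N + \delta a_1\,d_2|_N = (qa_2 + a_1^2)\id_N$, which requires the pencil in $a_1$ on the left-hand side to act as a scalar on $N$. The fact from \cite{GL03} (explained conceptually in Remark \ref{rem:skein-universal}) that the translation generator satisfies a degree-$2$ polynomial on every simple $\HTL_r(q)$-module singles out a unique $a_1$ making this pencil scalar, and then $a_2$ is read off from the scalar value. The resulting extension descends through $\rho^{ext}$ by construction, realising $N$ as a pullback via $\rho$ of a simple $\TLB_r(q,a_1,a_2)$-module (simplicity being preserved because $\rho$ is surjective). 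The main obstacle will be the precise diagrammatic identification of $d_1$ and $d_2$ as elements of $\HTL_r(q)$ and verifying that the scalar-on-pencil condition is equivalent to the known quadratic relation on $x_1$; both should follow from the calculation already carried out in the proof of Lemma \ref{lem:central-skein-ATL} together with the identities of Lemma \ref{lem:extra-inv}.
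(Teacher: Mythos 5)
The paper offers no proof of this lemma at all --- it is introduced only with the sentence that the statements ``are easy consequences of Lemma \ref{lem:central-skein}'' --- so there is no argument to compare yours against line by line. Your treatment of parts (1) and (2) is sound: the sliding/centrality argument via Lemma \ref{lem:central-ATL} correctly identifies the relations induced on $\End(m,v^r)$ with the two-sided ideal $J_{a_1,a_2}$ (the one point worth making explicit is that any morphism $(m,v^r)\to (A,m,B)$ must join the unique bottom $m$ to the unique top $m$ by a vertical pole, so the sliding always applies), and the surjectivity of $\rho$ is exactly as you say.

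Part (3), however, has a genuine gap on the existence side. You verify that the single relation of Lemma \ref{lem:central-skein} can be made consistent with the action on $N$ by a suitable choice of $(a_1,a_2)$, and then assert that ``decreeing'' $z_i\otimes\id_{v^r}\mapsto a_i$ yields a well-defined $\HTL^{ext}_r(q)$-action. But $\HTL^{ext}_r(q)$ is not the extension of $\wp(\HTL_r(q))$ by two free central generators subject only to that skein relation: there are further identities mixing the centre with the subalgebra, the most visible being $e_1\,\wp(x_1)\,e_1=(z_1\otimes\id_{v^r})\,e_1$ (composing $e_1$ with the winding generator closes a loop around the pole, producing precisely the tangle of \eqref{eq:central-zs}). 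This forces the additional, and by no means automatic, condition that $e_1x_1e_1$ act on $N$ as $a_1e_1$; checking only the skein relation shows that one element of $\ker\rho$ kills $N$, not that all of $\ker\rho$ does. Filling this requires either identifying $\ker\rho$ with the ideal $J_Q$ of Definition \ref{def:TLB-alg} and then invoking the known behaviour of $x_1$ and $e_1x_1e_1$ on the cell modules $W_{t,z}(r)$ from \cite{GL98,GL03} (cf.\ Theorem \ref{thm:pullback}), or some substitute for that structural input. Two smaller points: the assertion that ``Schur's lemma forces the pair $(a_1,a_2)$ to be uniquely determined'' is not correct as stated --- Schur's lemma only says the central elements act by scalars in any \emph{given} extension, and uniqueness across extensions must come from the relations, with the degenerate case where $x_1$ acts as a scalar (so that its quadratic is not unique) needing separate treatment via the invariance of the relations under $Q\mapsto -Q^{-1}$; and the element $d_1$ you leave unidentified is $\wp(x_1)^2$, which is what makes the ``pencil'' a genuine quadratic in $x_1$ and lets you import the degree-two characteristic polynomial from \cite{GL03}.
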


\subsection{The Templey-Lieb category of type $B$}\label{sect:TLBC}
\subsubsection{Definition}
We now introduce another quotient category of $\RTC$, 
denoted  by $\ATLC(q, \Omega)$, which depends on an invertible scalar $\Omega$. 
 Its objects are the same as those of $\RTC$. Given any two objects $T$ and $B$ in $\ATLC(q, \Omega)$, 
the space $\Hom_{\ATLC(q, \Omega)}(B, T)$ of morphisms is the quotient space of $\Hom_{\RTC}(B, T)$ obtained by imposing 
locally the relations listed below and referred to respectively as skein relations, free loop removal and tangled loop removal. 

\noindent Skein relations:
\[
\begin{picture}(150, 70)(0,0)
\put(-15, 30){$q^{\frac{1}{2}}$}
\qbezier(0, 60)(0, 60)(20, 0)
\qbezier(20, 60)(20, 60)(13, 33)
\qbezier(0, 0)(0, 0)(8, 24)
\put(30, 30){$=$}
\put(50, 30){$q$}

\put(60, 0){\line(0, 1){60}}
\put(80, 0){\line(0, 1){60}}

\put(95, 30){$+$}

\qbezier(120, 0)(135, 50)(150, 0)
\qbezier(120, 60)(135, 10)(150,60)
\put(155, 0){, }
\end{picture}
\]

\[
\begin{picture}(150, 70)(0,0)
\put(-20, 30){$q^{-\frac{1}{2}}$}
\qbezier(0, 0)(0, 0)(20, 60)
\qbezier(0, 60)(0, 60)(7, 33)
\qbezier(20, 0)(20, 0)(12, 26)

\put(25, 30){$=q^{-1}$}

\put(60, 0){\line(0, 1){60}}
\put(80, 0){\line(0, 1){60}}

\put(95, 30){$+$}

\qbezier(120, 0)(135, 50)(150, 0)
\qbezier(120, 60)(135, 10)(150,60)
\put(155, 0){, }
\end{picture}
\]

\[
\phantom{XXXXXX}
\begin{picture}(80, 100)(-30,0)
\put(-40, 48){$q$}
{
\linethickness{1mm}
\put(-15, 65){\line(0, 1){35}}
\put(-15, 25){\line(0, 1){35}}
\put(-15, 0){\line(0, 1){18}}
}

\qbezier(-19, 38)(-35, 30)(-15, 22)
\qbezier(-15, 62)(5, 55)(-11, 42)
\qbezier(-15, 62)(-35, 75)(-19, 85)

\qbezier(0, 100)(-6, 90)(-11, 87)
\qbezier(-15, 22)(-5, 15)(0, 0)
\put(10, 45) {$=$}
\end{picture}
\begin{picture}(150, 100)(-80,0)
\put(-95, 48){$(\Omega + \Omega^{-1})$}
{
\linethickness{1mm}
\put(-15, 40){\line(0, 1){60}}
\put(-15, 0){\line(0, 1){35}}
}

\qbezier(-15, 37)(-45, 53)(-19, 70)
\qbezier(0, 100)(0, 80)(-11, 72)
\qbezier(-15, 37)(0, 30)(0, 0)
\end{picture}
\begin{picture}(50, 100)(20,0)
\put(-40, 48){$-\  q^{-1}$}
{
\linethickness{1mm}
\put(5, 0){\line(0, 1){100}}
}

\put(20, 0){\line(0, 1){100}}
\put(25, 0){;}
\end{picture}
\]
\\

\noindent
Free loop removal:
\[
\begin{picture}(100, 60)(20,0)
\put(0, 30){\circle{25}}
\put(35, 25){$=$}
\put(65, 25){$- (q+q^{-1})$;}
\end{picture}
\]

\noindent Tangled loop removal:
\[
\begin{picture}(150, 100)(-90,0)
{
\linethickness{1mm}
\put(-15, 65){\line(0, 1){35}}
\put(-15, 42){\line(0, 1){58}}
\put(-15, 0){\line(0, 1){36}}
}

\qbezier(-19, 68)(-50, 50)(-15, 38)

\qbezier(-11, 68)(-3, 70)(0, 85)

\qbezier(-15, 38)(-5, 38)(0, 20)

\put(20, 20){\line(0, 1){65}}

\qbezier(0, 20)(10, -10 ) (20, 20)
\qbezier(0, 85)(10, 105) (20, 85)

\put(35, 45){$=$}
\end{picture}
\begin{picture}(150, 110)(-50,0)
\put(-50, 45){$-(\Omega+\Omega^{-1})$}
{
\linethickness{1mm}
\put(20, 0){\line(0, 1){100}}
}

\put(25, 0){.}
\end{picture}
\]

The image in $\ATLC(q, \Omega)$ of a tangle diagram from $\RTC$ will be  depicted by the same diagram, but is understood to represent a coset, and therefore obeys the above relations. Composition of morphisms  in $\ATLC(q, \Omega)$ is inherited from that in $\RTC$. 

\begin{definition}\label{def:ATLC-q}
The category $\ATLC(q, \Omega)$ is  the {\em multi-polar \tl category with one parameter}.
\end{definition}

The following lemma is obtained by specialising the parameters of the two parameter multi-polar \tl category. 
\begin{lemma}\label{lem:iso-tlcb-2}
There is an equivalence $\ATLC(q, \Omega)\cong\ATLC(q, a_1,  a_2)$ of categories, where the parameters $a_1, a_2$ are respectively given by
\[
a_1=-(\Omega+\Omega^{-1}), \quad a_2=-q^{-1}((\Omega+\Omega^{-1})^2+\delta q^{-1}). 
\]
\end{lemma}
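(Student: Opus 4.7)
The plan is to exhibit both $\ATLC(q, \Omega)$ and $\ATLC(q, a_1, a_2)$ as quotients of $\ATLC(q)$ by equivalent sets of additional relations, so that the equivalence of categories will be realised by the identity functor on objects and on underlying tangle diagrams. It then suffices to show that the two sets of defining relations are mutually derivable in $\ATLC(q)$. Half the dictionary is immediate: tangled loop removal in $\ATLC(q,\Omega)$, read as an equality in $\Hom_{\ATLC(q)}(m,m)$, is precisely the relation $z_1 = -(\Omega+\Omega^{-1})\,\id_m$, which matches $z_1 = a_1$ under the choice $a_1 = -(\Omega+\Omega^{-1})$.

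The nontrivial step identifies the third skein relation of $\ATLC(q,\Omega)$ with the condition $z_2 = a_2$, and the central tool is Lemma \ref{lem:central-skein-ATL}, an identity which already holds in $\ATLC(q)$. Schematically it reads
\[
q\delta\, D_1 + \delta z_1\, D_2 = (qz_2 + z_1^2)\,\id_{(m,v)},
\]
where $D_1$ is the doubly-linked diagram and $D_2$ is the ``pole-with-loop'' diagram appearing as the first two terms of the third skein. Working in $\ATLC(q,\Omega)$, one substitutes the third skein relation $qD_1 = (\Omega+\Omega^{-1})D_2 - q^{-1}\id$ into the above and uses $z_1 = a_1$: the $D_2$-terms cancel, leaving $(qz_2 + a_1^2 + \delta q^{-1})\otimes\id_v = 0$, which rearranges to $z_2 = -q^{-1}((\Omega+\Omega^{-1})^2 + \delta q^{-1})\,\id_m = a_2\,\id_m$. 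Conversely, working in $\ATLC(q,a_1,a_2)$ with the given values, the identity $qa_2 + a_1^2 = -\delta q^{-1}$ is a direct algebraic check; substituting $z_1 = a_1$, $z_2 = a_2$ into Lemma \ref{lem:central-skein-ATL} and dividing by $\delta$ recovers the third skein relation, while tangled loop removal is just $z_1 = a_1$.

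The main subtlety is passing from $(qz_2 + a_1^2 + \delta q^{-1})\otimes\id_v = 0$ in $\End_{\ATLC(q)}((m,v))$ to the scalar identity in $\End_{\ATLC(q)}(m)$. This uses the injectivity of the map $f \mapsto f\otimes\id_v$, which follows from a partial trace via the cup-and-cap morphisms on $v$, producing a factor of the loop value $\delta\ne 0$. Modulo this standard point, the two derivations above close the loop of implications and yield the stated equivalence with the prescribed parameter dictionary.
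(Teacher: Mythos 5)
Your proof is correct and follows exactly the route the paper intends: the paper's own proof is just ``This is clear,'' but the surrounding machinery (Lemma \ref{lem:central-skein-ATL} and Lemma \ref{lem:skein-a-a}, plus the tangled loop removal being literally $z_1=a_1$) encodes precisely your translation between the third skein relation and the specialisation $z_2=a_2$. Your handling of the cancellation of $\otimes\,\id_v$ via a partial trace contributing the invertible factor $\delta$ is a legitimate and worthwhile point that the paper leaves implicit.
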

\begin{proof} 
This is clear. 
\end{proof}

%
%
%
%
It is further clear that the \tl category $\TLC(q)$ is contained in $\ATLC(q, \Omega)$ as a full subcategory. 
There is another full subcategory of $\ATLC(q, \Omega)$, which is of particular interest to us.
 \begin{definition}\label{def:tl(b)}
The Temperley-Lieb category of type $B$ is the full subcategory of $\ATLC(q, \Omega)$ with objects of the 
form $(m, v^r):=(m, \underbrace{v, \dots, v}_r)$ for all $r\in \Z_{\ge 0}$. This category will be denoted by  $\TLBC(q, \Omega)$.  
\end{definition}

\begin{remark}\label{rem:tlsub} Note that the finite \tl category $\TLC(q)$ may be thought of as a subcategory of $\TLBC(q,\Omega)$,
since diagrams from $(v^r)$ to $(v^s)$ are evidently in bijection with diagrams $(m,v^r)$ to $(m,v^s)$ which have no entanglement with the pole.
Thus $\TLC(q)$ is the subcategory of $\TLBC(q,\Omega)$ with precisely such morphisms.
Note also that $\TLC(q)$ may be regarded  as a subcategory of $\TLBC(q, a_1, a_2)$ in the same way.
\end{remark}

\begin{remark}
If $a_1, a_2$ are as in Lemma \ref{lem:iso-tlcb-2},  then $\TLBC(q, \Omega)\cong\TLBC(q, a_1,  a_2)$.
\end{remark}

\subsubsection{Structure of the Templey-Lieb category of type $B$}\label{sect:TLBC-struct}
It is the category $\TLBC(q, \Omega)$ which is relevant to the study of quantum Schur-Weyl duality in Section \ref{sect:Schur-Weyl}.  
We therefore investigate  its structure in more depth. 

\begin{remark}
All results obtained in this section are also valid for $\TLBC(q, a_1, a_2)$,  the two-parameter Templey-Lieb category of type $B$. 
\end{remark}

Morphisms of $\TLBC(q, \Omega)$ are linear combinations of diagrams with only one vertical thick arc placed 
at the left end, which can be described explicitly as follows.  
\begin{definition}
Call a tangle diagram in $\TLBC(q, \Omega)$ a Temperley-Lieb diagram if it satisfies the following conditions: 
\begin{enumerate}
\item the diagram has only one vertical thick arc placed at the left end, which will be called the pole;
\item there are no loops in the diagram;
\item arcs do not self-tangle, and thin arcs do not tangle with thin arcs; 
\item if a thin arc tangles  with the thick arc, it crosses the thick arc just twice, and crosses behind the pole
in the upper crossing. 
\end{enumerate}
\end{definition}

For example, the diagrams in Figure \ref{fig:seven-five} and Figure \ref{fig:3-5} are morphisms in $\TLBC(q, \Omega)$. 
The one in Figure \ref{fig:3-5} is not in $\TLC(q)$ while that in Figure \ref{fig:seven-five} is in $\TLC(q)$ (see Remark \ref{rem:tlsub}).

\begin{figure}[h]
\begin{picture}(160, 60)(-30,0)

{
\linethickness{1mm}
\put(-15, 0){\line(0, 1){60}}
}

\qbezier(20, 60)(50, 10)(80, 60)
\qbezier(30, 60)(50, 25)(70, 60)

\qbezier(0, 60)(20, 40)(80, 0)

\qbezier(0, 0)(30, 40)(60, 0)

\qbezier(15, 0)(30, 20)(45, 0)

\qbezier(90, 0)(105, 40)(120, 0)
\end{picture}
\caption{A diagram $(m, v^7)\to(m, v^5)$}
\label{fig:seven-five}
\end{figure}

\begin{figure}[h]
\begin{picture}(150, 100)(-50,0)
{
\linethickness{1mm}
\put(-15, 25){\line(0, 1){75}}
\put(-15, 0){\line(0, 1){16}}
}

\qbezier(10, 100)(6, 80)(-11, 75)
\qbezier(-5, 100)(-5, 80)(-11, 80)


\qbezier(-19, 74)(-30, 70)(-30, 50)
\qbezier(-15, 22)(-30, 26)(-30, 50)

\qbezier(-19, 80)(-40, 75)(-38, 40)
\qbezier(-15, 19)(-40, 25)(-38, 50)
\qbezier(-15, 19)(25, 15)(25, 0)
\qbezier(-5, 0)(3, 20)(10, 0)
\qbezier(-15, 22)(40, 10)(50, 100)
\qbezier(20, 100)(30, 20)(40, 100)
\qbezier(35, 0)(50, 45)(60, 100)
\end{picture}
\caption{A diagram $(m, v^4)\to(m, v^6)$}
\label{fig:3-5}
\end{figure}

\noindent 
Figure \ref{fig:seven-five} is a morphism $(m, v^7) \to (m, v^5)$,   where arcs do not cross, and  
Figure \ref{fig:3-5} is a morphism $(m, v^4) \to (m, v^6)$,  where $2$ thin arcs over cross  the thick arc twice each.  
Note that there is a unique Templey-Lieb diagram $m\to m$ consisting of a thick arc only.

The spaces of morphisms of $\TLBC(q, \Omega)$ are easily seen to be spanned by Temperley-Lieb diagrams, since the relations
in Section \ref{sect:TLBC} may be used to reduce any diagram to a linear combination of Temperley-Lieb diagrams.  

Composition of morphisms may be described as follows. Given morphisms 
$D: B\to T$ and $D': T\to U$ represented by \tl diagrams, their composition is defined by the following steps 
\begin{enumerate}
\item Concatenation of diagrams. Concatenate the diagrams $D$ and $D'$ by joining the points on the top of $D$ with those on the bottom of $D'$. 
\item Reduction to Temperley-Lieb diagrams.  Apply locally the skein relation, free loop removal and 
tangled loop removal to turn the resulting diagram into a linear combination of  Temperley-Lieb diagrams $B\to U$.

\item The result of step (2) is the composition $D\circ D'$ of the morphisms $D$ and $D'$. 
\end{enumerate}

The following result is obtained by repeatedly applying the straightening relations.
\begin{lemma}\label{lem:hom-iso}
Let $N$ be any non-negative integer. Then for $r=0, 1, \dots, 2N$, the vector spaces  
$
\Hom_{\TLBC(q, \Omega)}((m, v^r), (m, v^{2N-r}))
$
are all isomorphic. 
\end{lemma}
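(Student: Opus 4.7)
The plan is to exhibit, for each $r$ with $1 \le r \le 2N$, an explicit linear isomorphism
\[
\Phi_r \colon \Hom_{\TLBC(q,\Omega)}\bigl((m, v^r), (m, v^{2N-r})\bigr) \longrightarrow \Hom_{\TLBC(q,\Omega)}\bigl((m, v^{r-1}), (m, v^{2N-r+1})\bigr),
\]
and then to compose these to identify all $2N+1$ of the Hom spaces.

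Given a morphism $f \colon (m, v^r) \to (m, v^{2N-r})$, I set
\[
\Phi_r(f) := (f \otimes \id_v) \circ \bigl(\id_{(m, v^{r-1})} \otimes \cup\bigr),
\]
where $\cup \colon \emptyset \to (v, v)$ is the cup appearing as an elementary generator in Theorem~\ref{thm:tensor-cat}(2). Diagrammatically, $\Phi_r(f)$ is obtained by bending the rightmost input strand of $f$ upward so that it becomes a new rightmost output strand. In the reverse direction, I define
\[
\Psi_r(g) := \bigl(\id_{(m, v^{2N-r})} \otimes \cap\bigr) \circ (g \otimes \id_v),
\]
using the cap $\cap \colon (v, v) \to \emptyset$. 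Because the attached cup and cap lie strictly to the right of every other strand (and in particular to the right of the pole), no new crossings with the thick arc and no tangled loops are created, so both maps land in spaces spanned by honest \tl diagrams.

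To verify that $\Phi_r$ and $\Psi_r$ are mutually inverse, I apply the interchange law for tensor and composition to slide $f$ past the identity tensor factors; the composite $\Psi_r \circ \Phi_r$ acting on $f$ then becomes $f$ postcomposed (on its former rightmost input strand) with the zigzag $(\id_v \otimes \cap) \circ (\cup \otimes \id_v)$. By the straightening relations of Theorem~\ref{thm:tensor-cat}(3)(c), this zigzag equals $\id_v$, so $\Psi_r \circ \Phi_r = \id$. The identity $\Phi_r \circ \Psi_r = \id$ follows identically using the mirror straightening relation. Composing $\Phi_{2N} \circ \Phi_{2N-1} \circ \cdots \circ \Phi_{r+1}$ then yields, for each $r$, an isomorphism from the $r = 2N$ Hom space onto the $r$-th, so that all $2N+1$ Hom spaces are mutually isomorphic.

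The only point demanding real care is bookkeeping: the cup and cap must be attached to the outermost (rightmost) strand so that the zigzag is genuinely local and the straightening relation can be applied without any interaction with the pole or with other strands. Beyond this combinatorial point there is no substantive obstacle, which is why the hint in the statement of the lemma describes the argument as ``repeatedly applying the straightening relations.''
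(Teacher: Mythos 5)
Your proof is correct and is exactly the argument the paper intends: the paper itself only says the proof is "the same as in [LZ14]", namely the standard bending of a boundary strand via a cup or cap, with the two zigzag (straightening) relations of Theorem \ref{thm:tensor-cat}(3)(c) showing that the two bending maps are mutually inverse. The only blemishes are cosmetic (the zigzag is attached by precomposition on the input side, not postcomposition, and the displayed composite $\Phi_{2N}\circ\cdots\circ\Phi_{r+1}$ should be written in the opposite order), neither of which affects the argument.
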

\begin{proof} The proof is the same as in \cite{LZ14}. 
\end{proof}

Consider in particular $W(2N):=\Hom_{\TLBC(q, \Omega)}(m, (m, v^{2N}))$.  Let $F_tW(2N)$ be the subspace 
of $W(2N)$ spanned by diagrams satisfying that at most $t$ thin arcs are entangled with the pole.  
For example, Figure \ref{fig:0-12} is a diagram $m\to(m, v^{12})$,  
\begin{figure}[h]
\begin{picture}(150, 140)(-50,-40)
{
\linethickness{1mm}
\put(-15, 25){\line(0, 1){75}}
\put(-15, 0){\line(0, 1){16}}
\put(-15, -18){\line(0, 1){18}}
\put(-15, -40){\line(0, 1){16}}
}

\qbezier(10, 100)(6, 80)(-11, 75)
\qbezier(0, 100)(-5, 80)(-11, 80)

\qbezier(-19, 74)(-30, 70)(-30, 50)
\qbezier(-15, 22)(-30, 26)(-30, 50)

\qbezier(-19, 80)(-40, 75)(-38, 40)
\qbezier(-15, 19)(-40, 25)(-38, 50)

\qbezier(-15, 19)(70, 5)(110, 100)

\qbezier(60, 100)(55, 25)(100, 100)

\qbezier(70, 100)(65, 45)(90, 100)
\qbezier(-15, 22)(40, 10)(50, 100)
\qbezier(20, 100)(30, 20)(40, 100)

\qbezier(-15, -22)(-40, -12)(-19, 0)
\qbezier(-11, 1)(100, 3)(120, 100)
\qbezier(-15, -22)(100, -20)(140, 100)
\end{picture}
\caption{A diagram $m\to(m, v^{12})$}
\label{fig:0-12}
\end{figure}

\noindent
where $3$ thin arcs are entangled with the pole.  Note that two of those three (which cross the pole at its upper part)  
arcs are ``parallel'' when they cross the pole.  Let us make this notion more precise.

\begin{definition} Any thin arc which is entangled with the pole has two polar crossings, and in this way defines an interval on the pole,
which we call its ``polar interval''.
We say that two thin arcs in a Temperley-Lieb diagram are parallel if they are both entangled with the pole and
the polar interval defined by one is contained in the polar interval defined by the other. 
\end{definition}

\begin{remark}\label{rem:int}
There is a partial order on the thin arcs in a \tl diagram which are entangled with the pole, which is defined by containment
of their corresponding polar intervals. Two such arcs are parallel if they are comparable in this partial order.
\end{remark}

\begin{definition}\label{def:stand-disting}
Call a Temperley-Lieb diagram $m\to (m, v^{2N})$ {\em distinguished} if any pair of thin arcs which entangle the pole 
are parallel, and {\em standard} if no two thin arcs tangling with the pole are parallel.  In view of Remark \ref{rem:int},
a diagram is distinguished (resp. standard) if those among its arcs which entangle the pole are totally ordered (resp. have no 
two arcs which are comparable) in the partial order on pole-entangled arcs.
\end{definition}

Examples of distinguished and standard diagrams are given respectively in Figure \ref{fig:0-10-d} and in Figure \ref{fig:0-12-s}. 

It is easily seen that there is a unique distinguished (resp. standard) Temperley-Lieb diagram $m\to (m, v^{2N})$ 
with $N$ thin arcs tangling with the pole. 

\begin{figure}[h]
\begin{picture}(150, 120)(-50,-20)
{
\linethickness{1mm}
\put(-15, 25){\line(0, 1){75}}
\put(-15, 0){\line(0, 1){16}}
\put(-15, -18){\line(0, 1){18}}
}

\qbezier(10, 100)(6, 80)(-11, 75)
\qbezier(0, 100)(-5, 80)(-11, 80)

\qbezier(-19, 74)(-30, 70)(-30, 50)
\qbezier(-15, 22)(-30, 26)(-30, 50)

\qbezier(-19, 80)(-40, 75)(-38, 40)
\qbezier(-15, 19)(-40, 25)(-38, 50)

\qbezier(-15, 19)(70, 5)(110, 100)

\qbezier(60, 100)(55, 25)(100, 100)

\qbezier(70, 100)(65, 45)(90, 100)
\qbezier(-15, 22)(40, 10)(50, 100)
\qbezier(20, 100)(30, 20)(40, 100)

\end{picture}
\caption{A distinguished diagram $m\to(m, v^{10})$}
\label{fig:0-10-d}
\end{figure}

\begin{figure}[h]
\begin{picture}(150, 140)(-50,-40)
{
\linethickness{1mm}
\put(-15, 65){\line(0, 1){35}}
\put(-15, 25){\line(0, 1){35}}
\put(-15, 0){\line(0, 1){16}}
\put(-15, -18){\line(0, 1){18}}
\put(-15, -40){\line(0, 1){16}}
}

\qbezier(50, 100)(46, 50)(-11, 45)

\qbezier(-5, 100)(-2, 90)(-11, 82)
\qbezier(10, 100)(8, 70)(-15, 63)

\qbezier(-19, 79)(-40, 63)(-15, 63)

\qbezier(-15, 19)(-35, 30)(-19, 42)


\qbezier(-15, 19)(70, 5)(110, 100)

\qbezier(60, 100)(55, 25)(100, 100)

\qbezier(70, 100)(65, 45)(90, 100)
\qbezier(20, 100)(30, 35)(40, 100)

\qbezier(-15, -22)(-40, -12)(-19, 0)
\qbezier(-11, 1)(100, 3)(120, 100)
\qbezier(-15, -22)(100, -20)(140, 100)
\end{picture}
\caption{A standard diagram $m\to(m, v^{12})$}
\label{fig:0-12-s}
\end{figure}

Recall that $F_tW(2N)$ is the subspace of $W(2N)=\Hom_{\TLBC(q, \Omega)}(m, (m, v^{2N}))$ spanned by diagrams with at most $t$
thin arcs which are entangled with the pole.

\begin{lemma} \label{lem:reduct}
\begin{enumerate}
\item Any Temperley-Lieb diagram in $F_tW(2N)$, which does not belong to $F_{t-1}W(2N)$,  can be expressed as a linear 
combination of  elements of $F_{t-1}W(2N)$ and a distinguished (resp. standard) diagram  with $t$ thin arcs tangling with the pole, 
where the coefficient of the distinguished (resp. standard) diagram is a (half-integer) power of $q$. 
\item Any Temperley-Lieb diagram in $F_tW(2N)$ can be expressed as a linear combination of  distinguished (resp. standard) diagrams. 
\end{enumerate}
\end{lemma}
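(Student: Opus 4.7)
The plan is to prove part (1) by induction on a complexity measure capturing the discrepancy between $D$ and being distinguished (resp. standard), with the third skein relation of Section \ref{sect:TLBC} serving as the key local move. Part (2) then follows quickly by induction on $t$.

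For part (1) in the distinguished case, let $D \in F_t W(2N) \setminus F_{t-1} W(2N)$. By the non-tangling condition on thin arcs, the polar intervals of the $t$ pole-entangled arcs of $D$ are pairwise either nested or disjoint, and so form a forest under the containment partial order of Remark \ref{rem:int}. Define $c(D)$ to be the number of pairs of pole-entangled arcs whose polar intervals are disjoint. If $c(D)=0$, the pole-entangled arcs are totally ordered by containment, so $D$ is already distinguished. Otherwise, pick two arcs $\alpha,\beta$ with disjoint polar intervals that are vertically consecutive along the pole (so that the lower crossing of one is immediately adjacent to the upper crossing of the other). In a neighbourhood of these two adjacent polar crossings, apply the third skein relation of Section \ref{sect:TLBC}: after rearrangement this expresses the local configuration as a power-of-$q$ multiple of the corresponding nested configuration (strictly reducing $c$), plus error terms in which either tangled loop removal or free loop removal \eqref{eq:flr} has been triggered, each removing at least one pole-entangled arc and so placing those terms in $F_{t-1}W(2N)$. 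Induction on $c(D)$ then reduces the leading summand to the distinguished form.

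For the standard case, the same argument runs in reverse: solve the third skein relation for the nested local configuration in terms of the disjoint one, and use the complexity $c'(D)$ that counts pairs of comparable polar intervals, so that the local move un-nests adjacent comparable pairs. Part (2) follows from part (1) by induction on $t$, the base case $t=0$ being trivial: apply part (1) to extract the top distinguished (resp. standard) summand, then invoke the inductive hypothesis on the error terms, which lie in $F_{t-1}W(2N)$.

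The main obstacle is verifying that the local move has exactly the stated form: applied to two vertically consecutive, non-parallel polar crossings, the third skein relation should yield a single principal term whose configuration strictly decreases $c$ (resp.\ $c'$) with coefficient a power of $q$, while all other terms produced lie in $F_{t-1}W(2N)$. This requires tracking the fate of the two adjacent polar crossings under the relation and using tangled loop removal (together with straightening if necessary) to collapse the loop that appears in the error terms into a scalar multiple of a diagram with one fewer entanglement. The geometric argument is in the spirit of the reduction underlying Lemma \ref{lem:hom-iso} and the classical analysis of Temperley-Lieb diagrams in \cite{LZ14}, but with the additional bookkeeping needed because the local move interacts with the pole.
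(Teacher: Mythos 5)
There is a genuine gap: the key local move is misidentified. The third skein relation of Section \ref{sect:TLBC} (equivalently \eqref{eq:skein}, $qL^2=(\Omega+\Omega^{-1})L-q^{-1}I$) is a relation on a \emph{single} thin strand that winds twice around the pole: its left-hand side is a local tangle with two thin endpoints on the boundary of the disc in which the relation is applied. Two distinct pole-entangled arcs with vertically adjacent polar crossings form a local tangle with \emph{four} thin endpoints, so the relation does not apply to that configuration; it trades winding number of one strand for lower winding numbers, and cannot convert a disjoint pair of polar intervals into a nested pair (or vice versa). Even if some variant did apply, its coefficients involve $\Omega+\Omega^{-1}$, which is not a power of $q$; this would destroy the ``(half-integer) power of $q$'' assertion of part (1), which is precisely what is needed later in the proof of Theorem \ref{thm:main}, and which must also survive in the two-parameter category $\TLBC(q,a_1,a_2)$, where no quadratic polar relation with $q$-power coefficients is available.

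The move the paper actually uses is different: after splitting the diagram as $D'\circ D$ with all entanglement concentrated in $D: m\to(m,v^{2t})$, one isotopes the entangled arcs past one another so that all $t$ of them become parallel (resp.\ pairwise non-parallel); this isotopy introduces crossings \emph{between thin arcs}, which after sliding all lie to the right of the pole. These thin–thin crossings are then resolved by the first two skein relations \eqref{eq:skein1}, \eqref{eq:skein2}. The term in which every crossing is given the identity smoothing is the distinguished (resp.\ standard) diagram, with coefficient a product of factors $q^{\pm 1}$ against the normalisation $q^{\pm 1/2}$, hence a half-integer power of $q$; every term containing a cup–cap smoothing reconnects two pole-entangled arcs and (after tangled or free loop removal where needed) lies in $F_{t-1}W(2N)$. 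Your induction on the count of disjoint (resp.\ comparable) pairs, and your deduction of part (2) from part (1) by descending through the filtration, are fine as organisational devices, but the engine driving the induction has to be the thin–thin skein relations, not the polar one.
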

\begin{proof}
By stretching the entangled arcs, one sees that
any Temperley-Lieb diagram diagram  $m\to (m, v^{2N})$  with $t$ thin arcs entangling the pole can be expressed as the 
composition $D'\circ D$ of two diagrams $D:  m\to (m, v^{2t})$  and $D':  (m, v^{2t}) \to (m, v^{2N})$, where  $D'$ has
no thin arcs which entangle the pole.

Now consider a diagram $D:  m\to (m, v^{2t})$ with $t$ thin arcs entangling the pole. If $D$ is distinguished, then the original 
diagram is distinguished. If $D$ is not distinguished, we may pull the thin arcs one by one so that all $t$ arcs are parallel,
at the expense of introducing crossings, which after sliding, will all be to the right of the pole.  
We now use skein relations to remove the crossings among the thin arcs. This leads to a 
linear combination of diagrams, where the only diagram with $t$ thin arcs entangling the pole is the distinguished one, and its coefficient is a power of $q$. 

In a similar way, we can express any Temperley-Lieb diagram in $F_t(2N)$   as a linear combination of a standard diagram and diagrams in $F_{t-1}(2N)$. 
This proves part (1) of the Lemma.

To see (2), observe that
given any Temperley-Lieb diagram diagram  $m\to (m, v^{2N})$  with $t$ thin arcs entangling the pole, we may use 
part (1) to express it as a linear combination of a distinguished (resp. standard) diagram and diagrams in $F_{t-1}(2N)$. 
We then repeat the reduction process for diagrams in $F_{t-1}(2N)$, then $F_{t-2}(2N)$, etc., and after $t$ iterations,
arrive at the statement (2).  
\end{proof}
The $r=N$ case of the following result is stated in \cite{GL98, GL03}. The general case can be deduced from this using Lemma \ref{lem:hom-iso}. 

\begin{lemma} \label{lem:dim-TLB} For all $r=0, 1, \dots, 2N$, 
\[
\dim\Hom_{\TLBC(q, Q)}((m, v^r), (m, v^{2N-r}))=\begin{pmatrix}2N\\ N\end{pmatrix}.
\]
\end{lemma}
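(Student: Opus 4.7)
The strategy is to reduce the dimension count for all values of $r$ to a single case by invoking Lemma \ref{lem:hom-iso}, and then to identify the resulting Hom space with a known algebra whose dimension is established.

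By Lemma \ref{lem:hom-iso}, the spaces $\Hom_{\TLBC(q,Q)}((m,v^r),(m,v^{2N-r}))$ are mutually isomorphic as $r$ ranges over $\{0,1,\dots,2N\}$, so it suffices to compute the dimension for any single convenient value of $r$. I would take $r=N$, where the space becomes the endomorphism algebra $\End_{\TLBC(q,Q)}((m,v^N))$. By Definition \ref{def:tl(b)} (and Lemma \ref{lem:iso-tlcb-2} relating the one- and two-parameter versions), this endomorphism algebra is the Temperley-Lieb algebra $\TLB_N(q,Q)$ of type $B$. The formula $\dim\TLB_N(q,Q)=\binom{2N}{N}$ is then the classical result of Graham-Lehrer \cite{GL98, GL03}, proved there via the cellular structure of the algebra.

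As a more self-contained alternative, I would take $r=0$ and study $W(2N)=\Hom_{\TLBC(q,\Omega)}(m,(m,v^{2N}))$ directly. By Lemma \ref{lem:reduct}(2), $W(2N)$ is spanned by the standard diagrams in the sense of Definition \ref{def:stand-disting}. A standard diagram is determined by specifying, for some $0\le t\le N$, which $2t$ of the $2N$ top endpoints belong to arcs entangling the pole (together with their uniquely forced non-parallel pairing) and a non-crossing matching of the remaining $2(N-t)$ endpoints; a careful bookkeeping of the compatibility constraints between the entangling arcs and the non-entangling non-crossing matching yields exactly $\binom{2N}{N}$ standard diagrams (this agrees with the cellular decomposition $\sum_{t=0}^N \dim\Delta_t$ of \cite{GL03}).

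\textbf{Main obstacle.} The spanning statement is essentially graphical and straightforward once Lemma \ref{lem:reduct}(2) is in place; the real difficulty is linear independence, since standard diagrams are produced as images in a quotient by several skein and loop-removal relations. In the reduction approach this is handled by directly citing the cellular analysis of \cite{GL98, GL03}; in the self-contained version it requires either developing the cellular structure in the present framework or exhibiting a sufficiently faithful family of representations, for example through the tensor functor $\CF'$ to $\U_q(\fsl_2)$-modules constructed later in the paper (Theorem \ref{thm:main}), which separates the standard diagrams. For the purposes of this lemma, the economical route is the reduction to $r=N$ combined with the citation, since the subsequent sections of the paper will independently reprove faithfulness through $\CF'$.
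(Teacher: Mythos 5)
Your proposal is essentially correct, and your second (``self-contained'') route is close in spirit to what the paper actually does, so let me compare the two routes against the paper's argument. The paper also invokes Lemma~\ref{lem:hom-iso} to reduce to a single value of $r$, but it chooses $r=0$ rather than $r=N$, and it resolves the linear-independence issue you rightly flag not by counting standard diagrams directly but by a module-theoretic device: it filters $W(2N)=\Hom_{\TLBC(q,Q)}(m,(m,v^{2N}))$ by the subspaces $F_tW(2N)$, observes that the subalgebra $\End^0(2N)\cong\TL_{2N}(q)$ of untangled diagrams acts on $W(2N)$ preserving the filtration, and identifies each subquotient $F_tW(2N)/F_{t-1}W(2N)$ with the cell module $W_{2t}(2N)$ of $\TL_{2N}(q)$, which is simple at generic $q$. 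The known dimensions $\binom{2N}{N-t}-\binom{2N}{N-t-1}$ then telescope to $\binom{2N}{N}$. This is exactly the ``cellular decomposition'' you allude to at the end of your second route, and the filtration is deliberately set up so that it can be reused in the proof of Theorem~\ref{thm:main} --- the paper explicitly says it gives this proof rather than the citation route for that reason.

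Your first route (reduce to $r=N$ and cite \cite{GL98,GL03}) is acknowledged by the paper as available, but be careful about one point: transferring the Graham--Lehrer dimension of the abstractly presented algebra $\TLB_N(q,Q)$ to the diagrammatically defined $\End_{\TLBC(q,Q)}((m,v^N))$ requires the isomorphism of Lemma~\ref{lem:TLB-alg-iso}, whose proof in the paper only verifies that the relations are preserved, i.e.\ it produces a surjection onto the subalgebra generated by the images. A surjection from an algebra of dimension $\binom{2N}{N}$ gives only the upper bound $\dim\End_{\TLBC(q,Q)}((m,v^N))\le\binom{2N}{N}$; the lower bound is precisely the linear independence of diagrams in the quotient category, which is the crux you correctly identified. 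So the citation route does not fully discharge the obligation on its own, and the paper's filtration argument (or your standard-diagram count, once independence is secured via the cell modules or via faithfulness of $\CF'$) is genuinely needed.
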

\begin{proof}  We give a proof here which will be useful for proving Theorem \ref{thm:main}. 
By Lemma \ref{lem:hom-iso}, we only need to prove the dimension formula for $r=0$. 
Recall that in the proof of Lemma \ref{lem:hom-iso} we introduced the following filtration for $W(2N)=\Hom_{\TLBC(q, Q)}(m, (m, v^{2N}))$.
\begin{eqnarray}\label{eq:filtr}
F_NW(2N)\supset F_{N-1}W(2N)\supset \dots\supset F_1W(2N)\supset F_0W(2N)\supset\emptyset,
\end{eqnarray}
where $F_tW(2N)$ is the subspace spanned by \tl diagrams with at most $t$ thin arcs entangled with the pole.
Let $\End(2N)= \Hom_{\TLBC(q, Q)}((m, v^{2N}), (m, v^{2N}))$ and denote by $\End^0(2N)$ the subspace of 
$\End(2N)$ spanned by diagrams without tanglement.  Then $\End^0(2N)$ is a subalgebra of  
$\End(2N)$ isomorphic to the Temperle-Lieb algebra $\TL_{2N}(q)$ of degree $2N$. Now $\End^0(2N)$  
 acts naturally on $W(2N)$, and the $F_tW(2N)$ are 
$\End^0(2N)$-submodules. Clearly $\frac{F_tW(2N)}{F_{t-1}W(2N)}$ is isomorphic to a cell module 
$W_{2t}(2N)$ \cite[Def.(2.2)] {GL98} for $\End^0(2N))$, which is a simple module in the present generic context. 
 Hence as vector space, 
\begin{eqnarray}
\Hom_{\TLBC(q, Q)}(m, (m, v^{2N})) \cong \bigoplus_{t=0}^N W_{2t}(2N).
\end{eqnarray}
Using  the well-known fact \cite{ILZ1} that 
\[
\dim W_{2t}(2N)=\begin{pmatrix}2N\\ N-t\end{pmatrix} - \begin{pmatrix}2N\\ N-t-1\end{pmatrix}  
\]
with $\dim W_{2N}(2N)=\begin{pmatrix}2N\\ 0\end{pmatrix} =1$, 
we obtain the result. 
\end{proof}

\subsubsection{The Temperley-Lieb algebra of type $B$}

Given any object $A$ in $\TLBC(q, \Omega)$, we let $\End(A):=\Hom_{\TLBC(q, \Omega)}(A, A)$. 
This forms an associative algebra with multiplication given by composition of morphisms.  

\begin{definition} \label{def:algs} For any $r$, let $\TLBC_r(q, \Omega)= \End(m, v^r)$. This is referred to as
a Temperley-Lieb algebra  of type $B$.
\end{definition}

The following lemma justifies the terminology.

\begin{lemma}\label{lem:TLB-alg-iso}
There is an algebra isomorphism 
\[
F: \TLB_r(q, Q)\stackrel{\sim}\longrightarrow \TLBC_r\left(q, \frac{Q}{\sqrt{-1}}\right)
\]
such that, for all $i=1, 2, \dots, r-1$, 
\[
\begin{aligned} 
&\begin{picture}(150, 70)(-20,0)
\put(-85, 30){$y_1\mapsto  q\sqrt{-1}$}
{
\linethickness{1mm}
\put(-15, 0){\line(0, 1){18}}
\put(-15, 22){\line(0, 1){38}}
}
\qbezier(0, 60)(-7, 50)(-12, 42)
\qbezier(-18, 38)(-25, 30)(-15, 20)
\qbezier(-15, 20)(-15, 20)(0, 0)
\put(20, 0){\line(0, 1){60}}
\put(40, 0){\line(0, 1){60}}
\put(60, 30){......}
\put(100, 0){\line(0, 1){60}}
\put(105, 0){, }
\end{picture}\\
&\begin{picture}(150, 70)(-20,0)
\put(-50, 28){$e_i\mapsto$}
{
\linethickness{1mm}
\put(-15, 0){\line(0, 1){60}}
}
\put(0, 0){\line(0, 1){60}}
\put(20, 0){\line(0, 1){60}}
\put(5, 30){...}
\qbezier(40, 0)(50, 50)(60, 0)
\qbezier(40, 60)(50, 10)(60, 60)
\put(80, 0){\line(0, 1){60}}
\put(100, 0){\line(0, 1){60}}
\put(85, 30){...}
\put(36, -10){\small$i$}
\put(52, -10){\small{$i$+1}}
\put(105, 0){. }
\end{picture}
\end{aligned}
\]
\end{lemma}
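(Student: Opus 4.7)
The plan is to verify that $F$ is a well-defined algebra homomorphism, then show surjectivity, and finally invoke a dimension count to conclude injectivity. Throughout I shall write $\xi_1$ for the polar-wrap diagram in the category, so that $F(x_1) = q\sqrt{-1}\,\xi_1$, and set $\Omega := Q/\sqrt{-1}$, so that $\TLBC_r(q, Q/\sqrt{-1}) = \TLBC_r(q, \Omega)$. The central computational identity I will repeatedly use is
\[
\sqrt{-1}\,(\Omega + \Omega^{-1}) \;=\; Q - Q^{-1},
\]
which is an immediate substitution.

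To establish well-definedness, the defining relations of $\HTL_r(q)$ that involve only the $e_i$'s are immediate from the usual Temperley-Lieb diagrammatics (free-loop removal and the straightening relations), so it suffices to check the two quotient relations of $J_Q$ from Definition \ref{def:TLB-alg}. For the quadratic relation $(x_1 - Q)(x_1 + Q^{-1}) = 0$, the third skein relation defining $\ATLC(q, \Omega)$ reads $q\,\xi_1^2 = (\Omega + \Omega^{-1})\,\xi_1 - q^{-1}\,\id$; substituting $\xi_1 = F(x_1)/(q\sqrt{-1})$ and applying the identity above yields $F(x_1)^2 = (Q - Q^{-1})F(x_1) + 1$. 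For the second relation, observe that $F(e_1)\,\xi_1\,F(e_1)$ is, diagrammatically, the cup--cap of $F(e_1)$ decorated by a single closed thin loop that tangles the pole in precisely the configuration prescribed by the tangled-loop-removal relation. That relation yields $F(e_1)\,\xi_1\,F(e_1) = -(\Omega+\Omega^{-1})\,F(e_1)$, and multiplication by $q\sqrt{-1}$ produces $F(e_1 x_1 e_1) = -q(Q-Q^{-1})F(e_1)$, as required.

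For surjectivity, Lemma \ref{lem:reduct} (and the analogous reduction argument applied to the endomorphism case) shows that every morphism in $\TLBC_r(q, \Omega)$ is a linear combination of Temperley-Lieb diagrams whose pole-entangled thin arcs are in standard (or distinguished) position. Any such diagram can be written as a composition of the elementary cup/cap tangles (the images of the $e_i$) together with pole-wraps (the image, up to the scalar $q\sqrt{-1}$, of $x_1$), so the images of the generators already span all of $\TLBC_r(q, \Omega)$. Injectivity then comes by a dimension count: setting $r = N$ in Lemma \ref{lem:dim-TLB} gives $\dim_{\CK_0}\TLBC_r(q, \Omega) = \binom{2r}{r}$, while the cellular structure of \cite{GL03} gives $\dim_{\CK_0}\TLB_r(q, Q) = \binom{2r}{r}$ as well; a surjection between finite-dimensional vector spaces of equal dimension is an isomorphism.

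The main obstacle will be the diagrammatic identification of $F(e_1 x_1 e_1)$: one must recognise that, after stacking and contracting the cups and caps, the $x_1$-strand closes into a single thin loop tangling the pole in exactly the configuration to which the tangled-loop-removal axiom of Section \ref{sect:TLBC} applies. Once this picture is clear, both quotient relations collapse to the one-line substitution $\sqrt{-1}(\Omega + \Omega^{-1}) = Q - Q^{-1}$. A subsidiary technical point, easily verified by pole-sliding, is that the derived generators $F(x_i)$ for $i \geq 2$, determined by $F(x_1)$ and the $F(e_j)$ via $x_{i+1} = (q+e_i)x_i(q+e_i)$, genuinely commute with $F(x_1)$ in the category, so that the relation $x_1 x_j = x_j x_1$ in $\HTL_r(q)$ is preserved.
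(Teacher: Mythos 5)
Your proposal is correct and its core — verifying the two $J_Q$-relations via the third skein relation (giving $F(x_1)^2=(Q-Q^{-1})F(x_1)+1$ after the substitution $Q=\sqrt{-1}\,\Omega$) and the tangled-loop-removal axiom (giving $F(e_1x_1e_1)=-q(Q-Q^{-1})F(e_1)$) — is exactly the paper's argument. You additionally spell out surjectivity (via the reduction to standard diagrams and the generators of \S\ref{sss:tp}) and injectivity (by matching the dimension $\binom{2r}{r}$ from Lemma \ref{lem:dim-TLB} against the known dimension of $\TLB_r(q,Q)$ from \cite{GL03}), steps the paper leaves implicit; this is a harmless and welcome completion rather than a different route.
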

\begin{proof} From the definitions of $\TLB_r(q, Q)$ and $\TLBC_r\left(q, \frac{Q}{\sqrt{-1}}\right)$, 
it is clear that the map $F$ respects the relations among the $e_i$, and also $F(y_1) F(e_i) = F(e_i) F(y_1)$ for all $i\ge 2$. 
Thus in order to prove the lemma, one only needs to show that $F$ also preserves the relations 
between $y_1$ and $e_1$ in Definition \ref{def:TLB-alg}. 

Note that the third skein relation in the definition of $\ATLC(q, \Omega)$ leads to
\[
F(y_1)^2 = \left(\sqrt{-1}\Omega- (\sqrt{-1}\Omega)^{-1}\right) F(y_1)+ \id, 
\]
where $\id$ is the identity of $\TLBC_r\left(q, \Omega\right)$.  By setting $Q={\sqrt{-1}}\Omega$ in this equation, we obtain 
\begin{align}
&(F(y_1)-Q)(F(y_1)+Q^{-1})=0.  \label{eq:F-skein}
 \end{align}
Also, tangled loop removal in $\ATLC(q, \Omega)$ leads to 
\begin{align}
F(e_1) F(y_1) F(e_1) = -q\sqrt{-1} (\Omega+\Omega^{-1}) F(e_1) = -q(Q-Q^{-1}) F(e_1).
\label{eq:F-inv}
 \end{align}
 Therefore the defining relations in Definition \ref{def:TLB-alg}
are indeed preserved by $F$, completing the proof of the lemma. 
\end{proof}


%
%
\subsection{Interrelationships among various diagram categories} \label{sect:summary}

We summarise the interrelationships among various categories which have arisen in the process of 
developing a new formulation of the \tl category of type $B$ introduced in \cite{GL03}. The relevant categories are:
\begin{itemize}
\item $\TLBB(q,Q)$,  the \tl category of type $B$ introduced in \cite{GL03};
\item $\RTC$, a category of coloured un-oriented tangle diagrams; 
\item $\ATLC(q)$,  the affine \tl category; 
\item $\ATLC(q, a_1, a_2)$,  the multi-polar \tl category with two parameters;
\item $\TLBC(q, a_1, a_2)$,  the \tl category of type $B$ with two parameters;
\item $\ATLC(q, \Omega)$,  the multi-polar \tl category with one parameter;
\item $\TLBC(q, \Omega)$,  the \tl category of type $B$; and
\item $\TLC(q)$,  the \tl category.
\end{itemize}

Their interrelationships are depicted in the commutative diagram below.

\begin{figure}[h]
\begin{tikzpicture}[scale=1.2]
\draw(0, 4) node {$\RTC$};
\draw(3,4) node {$\ATLC(q)$};
\draw(8,4) node {$\TLC(q)$};
\draw(3,2) node {$\ATLC(q, a_1, a_2)$};
\draw(8,2) node {$\ATLC(q,\Omega)$};
\draw(8,0) node {$\TLBC(q,\Omega)$};
\draw(3,0) node {$\TLBC(q, a_1, a_2)$};
\draw(0,-2) node {$\TLC(q)$};
\draw(8, -2) node {$\mathbb{TLB}(q, \sqrt{-1}\Omega)$};
\draw [->] (0.3,4) -- (2.4, 4);
%
\draw [->] (3, 3.7) -- (3, 2.3);
\draw [->] (3.2, 3.7) -- (7.8, 2.3);
\draw [->] (4.1, 2) -- (7.1, 2);
\draw [>->] (3, 0.3) -- (3, 1.7);
\draw [>->] (8, 0.3) -- (8, 1.7);
\draw [->] (4.2, 0) -- (7.1, 0);
\draw[>->](0.4,-1.7)--(3, -0.2);
\draw[>->](0.6,-1.7)--(7.8, -0.2);
\draw[>->](0.2,-1.7)--(2.8, 1.7);
\draw[>->](0,-1.7)--(2.8, 3.8);
\draw[>->](7.4, 4)--(3.6, 4);
\draw[>->](8, 3.7)--(8, 2.3);
\draw[->](8, -1.7)--(8, -0.3);
\draw[>->](0.7, -2)--(6.7, -2);

\draw(5.5, 2.2) node {$\text{specialisation}$};
\draw(5.5, 0.2) node {$\text{specialisation}$};
\draw(7.8, -1) node {$\simeq$};
\end{tikzpicture}
\caption{Relationships among categories}
\label{fig:cats}
\end{figure}
We note that the affine \tl category and the multi-polar \tl categories all contain two different copies of the \tl category $\TLC(q)$ as subcategories. The one at the top left corner of the diagram is always a full subcategory, while the other (at the bottom right corner) is a subcategory as described in Remark \ref{rem:tlsub}.

\subsection{Standard diagrams, tensor product and generators}\label{sss:tp}
The diagrams in the subcategory $\TLC(q)$ of $\TLBC(q,\Omega)$ are those which involve no entanglement  with the pole.
As such, they look like a disentangled pole adjacent to a `usual' finite Temperley-Lieb diagram (cf. \cite{GL98,GL03}).
Now there is an obvious functor 
\be\label{eq:tp2}
\TLBC(q,\Omega)\times\TLC(q)\lr \TLBC(q,\Omega)
\ee
which is defined by juxtaposition of diagrams, where the disentangled pole is omitted from  the second factor.
Moreover it is clear that the diagrams depicted as $I,A,U$ below generate the subcategory 
$\TLC(q)$ under composition and the tensor product defined by \eqref{eq:tp2}.
\begin{figure}[h]
\begin{picture}(130, 90)(-70,-5)
\put(-40, 40){$I=$}
{
\linethickness{1mm}
\put(-5, 0){\line(0, 1){80}}
}
\put(15, 0){\line(0, 1){80}}
\put(20, 0){,}
\end{picture}
\begin{picture}(120, 90)(-35,-5)
\put(-40, 40){$A=$}
{
\linethickness{1mm}
\put(-5, 0){\line(0, 1){80}}
}
\qbezier(15, 0)(25, 150)(35, 0)
\put(40, 0){,}
\end{picture}
\begin{picture}(100, 90)(-35,-5)
\put(-40, 40){$U=$}
{
\linethickness{1mm}
\put(-5, 0){\line(0, 1){80}}
}
\qbezier(15, 80)(25, -70)(35, 80)
\put(40, 0){.}
\end{picture}
\caption{Generators of a $\TLC(q)$ subcategory}
\label{fig:generators-pole}
\end{figure}


We wish to determine similar generators of the whole of $\TLBC(q,\Omega)$. For this we begin by observing that any 
Temperley-Lieb diagram may be expressed as a linear combination of diagrams which are {\em standard} 
in the following sense. 

\begin{definition}\label{def:stand-general}
A Temperley-Lieb diagram $D: (m, v^r)\to (m, v^s)$ is called {\em standard} if 
$(D\otimes \id_{v^r})(\id_m\otimes {\mathbb U}_r): m\to (m, v^{r+s})$ is a standard diagram  in the sense of Definition
\ref{def:stand-disting}, 
where ${\mathbb U}_r: \emptyset\to v^{2r}$ is given by the following diagram. 
\[
\begin{picture}(120, 70)(-35,30)
\put(-35, 60){${\mathbb U}_r=$}
\qbezier(-10, 80)(50, 0)(110, 80)
\qbezier(10, 80)(50, 20)(90, 80)
\put(25, 70){...}
\qbezier(40, 80)(50, 40)(60, 80)
\put(65, 70){...}
\end{picture}
\]
\end{definition}

It immediately follows from part (2) of Lemma \ref{lem:reduct} that 
\begin{corollary}
Any Temperley-Lieb diagram can be expressed as a linear combination of standard diagrams.
\end{corollary}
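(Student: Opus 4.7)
The plan is to transfer the problem to the case $r=0$ already handled by part (2) of Lemma \ref{lem:reduct}, using the ``cap/cup'' straightening isomorphism implicit in Lemma \ref{lem:hom-iso}.
First I would fix $r,s\ge 0$ with $r+s=2N$ and consider the linear map
\[
\Phi_{r,s}:\Hom_{\TLBC(q,\Omega)}((m,v^r),(m,v^s))\longrightarrow W(r+s)=\Hom_{\TLBC(q,\Omega)}(m,(m,v^{2N})),
\]
defined by $D\mapsto (D\otimes\id_{v^r})\circ(\id_m\otimes\mathbb{U}_r)$, i.e.\ by capping off the $r$ bottom vertices of $D$ against the $r$ rightmost endpoints of $\mathbb{U}_r$. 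By the straightening relations used in the proof of Lemma \ref{lem:hom-iso}, $\Phi_{r,s}$ is a linear isomorphism, whose inverse is given by composing below with the adjoint ``cap'' $\mathbb{A}_r:v^{2r}\to\emptyset$ and straightening.

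Next, I would observe that $\Phi_{r,s}$ carries Temperley-Lieb diagrams to Temperley-Lieb diagrams: juxtaposing a nested system of caps $\mathbb{U}_r$ on the bottom introduces no crossings, no loops and no additional entanglement with the pole, and afterwards the resulting diagram is itself a Temperley-Lieb diagram in $W(2N)$. Moreover, Definition \ref{def:stand-general} is made precisely so that $D:(m,v^r)\to(m,v^s)$ is standard if and only if $\Phi_{r,s}(D)$ is standard in the sense of Definition \ref{def:stand-disting}. Hence $\Phi_{r,s}$ restricts to a bijection between standard Temperley-Lieb diagrams in $\Hom((m,v^r),(m,v^s))$ and standard Temperley-Lieb diagrams in $W(2N)$.

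Now let $D:(m,v^r)\to(m,v^s)$ be an arbitrary Temperley-Lieb diagram. Apply Lemma \ref{lem:reduct}(2) to $\Phi_{r,s}(D)\in W(2N)$ to obtain an expression
\[
\Phi_{r,s}(D)\;=\;\sum_i c_i\,S_i,
\]
where the $S_i$ are standard diagrams in the sense of Definition \ref{def:stand-disting} and $c_i\in\CK_0$. Since $\Phi_{r,s}$ is a linear isomorphism, applying $\Phi_{r,s}^{-1}$ yields
\[
D=\sum_i c_i\,\Phi_{r,s}^{-1}(S_i),
\]
and by the previous paragraph each $\Phi_{r,s}^{-1}(S_i)$ is a standard Temperley-Lieb diagram in $\Hom((m,v^r),(m,v^s))$. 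This is the required expression.

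The main obstacle I anticipate is verifying rigorously that $\Phi_{r,s}$ and $\Phi_{r,s}^{-1}$ preserve the class of Temperley-Lieb diagrams and match the two notions of ``standard'', i.e.\ that the bookkeeping of polar intervals and parallelism behaves correctly under capping and uncapping. This is essentially the same argument as in \cite{LZ14} used in Lemma \ref{lem:hom-iso}; once this bijection of standard diagrams is established, the corollary reduces cleanly to Lemma \ref{lem:reduct}(2).
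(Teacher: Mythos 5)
Your proposal is correct and is essentially the paper's own argument: the paper derives the corollary "immediately" from Lemma \ref{lem:reduct}(2) precisely because Definition \ref{def:stand-general} defines standardness of a general diagram $(m,v^r)\to(m,v^s)$ via the capping map $D\mapsto (D\otimes\id_{v^r})\circ(\id_m\otimes\mathbb{U}_r)$, which (as in Lemma \ref{lem:hom-iso}) is a linear isomorphism sending Temperley-Lieb diagrams bijectively to Temperley-Lieb diagrams in $W(r+s)$. You have simply made explicit the transfer that the paper leaves implicit.
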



It is now clear that to obtain $\TLBC(q,\Omega)$ just one extra generator, depicted as $L$ in the diagram below, needs to be added to the set of generators of $\TLC(q)$ given in Figure \ref{fig:generators-pole}. 
\[
\begin{picture}(150, 90)(-80,-5)
\put(-65, 40){$L=$}
{
\linethickness{1mm}
\put(-15, 0){\line(0, 1){24}}
\put(-15, 30){\line(0, 1){50}}
}
\qbezier(-18, 60)(-40, 40)(-15, 27)
\qbezier(-15, 27)(0, 20)(0, 0)
\qbezier(0, 80)(0, 67)(-12, 64)
\put(15, 0){.}
\end{picture}
\]
We shall use these generators in the proof of Theorem \ref{thm:tlbequ} below.

Note that among the relations in $\TLBC(q,\Omega)$ we have (see ``skein relations'' and ``tangled loop removal'' above)
\be\label{eq:skein}
qL^2=(\Omega+\Omega\inv)L-q\inv I
\ee
and
\be\label{eq:loop}
A(L\ot I)U=-(\Omega+\Omega\inv).
\ee


\section{Quantum Schur-Weyl duality}\label{sect:Schur-Weyl}
\subsection{More on the Quantum group $\U_q(\fsl_2)$}

We give a more detailed account of the representations and the universal $R$-matrix of $\U_q(\fsl_2)$ in this section. 
We write $\Uq$ for the $\CK_0$-algebra $\U_q(\fsl_2)$. This has generators $E,F$ and $K^{\pm 1}$, with relations
\[
KEK\inv=q^2E,\quad  KFK\inv=q^{-2}F, \quad EF-FE=\frac{K-K\inv}{q-q\inv}.\] 

The comultiplication is given by
\[
\Delta(K)=K\ot K,\quad \Delta(E)=E\ot K+1\ot E,\quad \Delta(F)=F\ot 1+K\inv\ot F,
\]
and the antipode by
\[
S(E)=EK\inv,\quad S(F)=-KF,\quad S(K)=K\inv.
\]

\subsubsection{Representations}{\  }\\
{\em Projective modules}.
An integral weight $\U_q(\fsl_2)$-module $M$ of type-${\bf 1}$ is one such that $M=\oplus_{k\in \Z}M_k$ where 
$M_k=\{v\in M\ K v= q^k v \}$. 
Let $\U_q(\fb)$ be the Borel subalgebra of $\U_q(\fsl_2)$ generated by $E$ and $K^{\pm 1}$.  
The category $\CO_{int}$ is defined as the category of  $\U_q(\fsl_2)$-modules $M$, which satisfy:
\begin{itemize}
\item $M$ is finitely generated as a $\U_q(\fsl_2)$-module.
\item $M$ is locally $\U_q(\fb)$ finite.
\item $M$ is an integral weight module of type ${\bf 1}$. 
\end{itemize}

For any integer $\ell\in\Z$, denote by $(\CK_0)_\ell = \CK_0v^+$ the $1$-dimensional $\U_q(\fb)$-module such that 
$E v^+=0$ and $K v^+ = q^{\ell} v^+$, and let $M(\ell)=\U_q(\fsl_2)\otimes_{\U_q(\fb)}(\CK_0)_\ell$. This is the Verma module with highest weight $\ell$, which has a unique 
simple quotient $V(\ell)$. Then $M(\ell)$ and $V(\ell)$ are the standard and simple objects in $\CO_{int}$. 
The simple module $V(\ell)$ is finite dimensional if and only if $\ell\ge 0$, and in this case it is $(\ell+1)$-dimensional. 

The quantum group $\U_q(\fsl_2)$ has a central element $z:=FE+\frac{qK+q^{-1}K^{-1}}{(q-q^{-1})^2}$. 
It acts as the scalar $\chi(\ell):=\frac{q^{\ell+1}+q^{-\ell-1}}{(q-q^{-1})^2}$ on a highest weight vector of weight $\ell$, and since 
it is central, therefore acts as the scalar $\chi(\ell)$ on the whole of any highest weight module in $\CO_{int}$ with highest weight $\ell$. 

For any module $M\in\CO_{int}$, if we 
define $M^{\chi_\ell}:=\{m\in M\mid (z-\chi_\ell)^im=0\text{ for some }i\geq 0\}$,
then $M^{\chi_\ell}$ is a direct summand of $M$. We denote by $\CO_{int}^{\chi_\ell}$ the full subcategory of $\CO_{int}$ whose objects are modules $M$ such that $M=M^{\chi_\ell}$, and call it the block (which is indeed a block)
of $\CO_{int}$ corresponding to $\chi_\ell$.
It follows from the definition of $\CO_{int}$ that any $M\in\CO_{int}$ is an object in a direct sum of finitely many blocks.

Clearly $\chi(\ell) = \chi(\ell')$ if and only if  $\ell=\ell'$  or $\ell+\ell'+2=0$.  

\begin{definition}\label{def:link}
The weights $\ell,\ell'\in\Z$ are said to be linked if $\ell=\ell'$ or $\ell+\ell'+2=0$. The linkage principal asserts here merely
that $\chi(\ell)=\chi(\ell')\iff$ $\ell,\ell'$ are linked.
\end{definition}

Observe that if $\ell\ge -1$, then there exists no $\ell'>\ell$ linked to $\ell$. 
This leads to the following result, which is well known, but we provide a proof for the convenience of the reader.

\begin{lemma} \label{lem:proj} Fix an integer $\ell\ge -1$.
\begin{enumerate}
\item The Verma module $M(\ell)$ is projective in $\CO_{int}$. 
\item If $M$ is a finite dimensional module in $\CO_{int}$, then
\[
\Hom_{\U_q(\fsl_2)}(M(\ell), M(\ell)\otimes M) \cong M_0, 
\]
where $M_0$ is the zero weight space of $M$. 
\end{enumerate}
\end{lemma}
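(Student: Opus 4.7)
The proof splits into two parts, with part (1) supplying the exactness used in part (2).

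For part (1), fix $\ell \ge -1$ and observe that every integer $\ell'$ linked to $\ell$, i.e.\ satisfying $\chi(\ell') = \chi(\ell)$, lies in $\{\ell,\,-\ell-2\}$ and hence satisfies $\ell' \le \ell$. Any $N \in \CO_{int}$ is finitely generated and locally $\U_q(\fb)$-finite, so $N$ has a maximum weight, achieved by a highest weight vector $v$. When $N \in \CO_{int}^{\chi_\ell}$, the linkage principle (Definition \ref{def:link}) forces the weight of $v$ into $\{\ell,\,-\ell-2\}$, so all weights of $N$ are at most $\ell$ and in particular $N_{\ell+2} = 0$. By Frobenius reciprocity applied to $M(\ell) = \U_q(\fsl_2) \otimes_{\U_q(\fb)} (\CK_0)_\ell$,
\[
\Hom_{\U_q(\fsl_2)}(M(\ell), N) \;\cong\; \{n \in N_\ell : En = 0\},
\]
and since $M(\ell) \in \CO_{int}^{\chi_\ell}$ this equals $\{n \in N_\ell^{\chi_\ell} : En = 0\} = N_\ell^{\chi_\ell}$, where the last equality uses that $E$ maps $N_\ell^{\chi_\ell}$ into $N_{\ell+2}^{\chi_\ell} = 0$. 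Since the functor $N \mapsto N_\ell^{\chi_\ell}$ is the composition of the block projection with a weight-space functor, both of which are exact, $M(\ell)$ is projective.

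For part (2), I would combine Frobenius reciprocity with the tensor (projection) identity
\[
M(\ell) \otimes M \;\cong\; \U_q(\fsl_2) \otimes_{\U_q(\fb)} \bigl((\CK_0)_\ell \otimes \mathrm{Res}\, M\bigr).
\]
The finite-dimensional $\U_q(\fb)$-module on the right admits a composition series by one-dimensional weight modules $(\CK_0)_{\ell+\mu_i}$, where $\mu_i$ runs over the weights of $M$ with multiplicity. Inducing produces a filtration of $M(\ell) \otimes M$ by $\U_q(\fsl_2)$-submodules with Verma quotients $M(\ell + \mu_i)$. By part (1) the functor $\Hom_{\U_q(\fsl_2)}(M(\ell),-)$ is exact, so summing over the filtration yields
\[
\dim \Hom_{\U_q(\fsl_2)}(M(\ell),\, M(\ell) \otimes M) \;=\; \sum_i \dim \Hom_{\U_q(\fsl_2)}(M(\ell),\, M(\ell + \mu_i)).
\]

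The argument concludes by checking that $\dim \Hom_{\U_q(\fsl_2)}(M(\ell), M(n)) = \delta_{\ell,n}$ for every integer $n$ when $\ell \ge -1$. By Frobenius reciprocity this dimension counts weight-$\ell$ vectors in $M(n)$ killed by $E$; the weight-$\ell$ space of $M(n)$ vanishes unless $n \ge \ell$ and $n \equiv \ell \pmod 2$, and when nonzero it is spanned by $F^{(n-\ell)/2}v_n^+$. The standard identity $EF^k v_n^+ = [k]_q [n-k+1]_q F^{k-1} v_n^+$ then makes this vector $E$-closed only if $(n-\ell)/2 = 0$ or $(n-\ell)/2 = n+1$; the second possibility gives $n = -\ell-2 < \ell$ (using $\ell \ge -1$, excluding only the coincident case $\ell = -1 = n$) and is ruled out. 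Collecting the surviving terms in the filtration sum leaves exactly those $i$ with $\mu_i = 0$, yielding $\dim M_0$. I expect the principal technical point to be the Verma filtration of $M(\ell) \otimes M$: although the underlying projection formula is classical, its quantum incarnation demands attentive bookkeeping with the coproduct when identifying induction with tensor product.
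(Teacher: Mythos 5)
Your proof is correct. Part (2) follows essentially the same route as the paper: the tensor identity $M(\ell)\otimes M\cong \U_q(\fsl_2)\otimes_{\U_q(\fb)}((\CK_0)_\ell\otimes M)$, the induced Verma filtration, exactness of $\Hom_{\U_q(\fsl_2)}(M(\ell),-)$ from part (1), and the vanishing of $\Hom_{\U_q(\fsl_2)}(M(\ell),M(n))$ for $n\neq\ell$ — though you verify this last point by the explicit computation $EF^kv_n^+=[k]_q[n-k+1]_qF^{k-1}v_n^+$, where the paper simply invokes the linkage principle; both are valid at generic $q$. Part (1) is where you genuinely diverge. The paper verifies the lifting property directly: it lifts $\phi(m_+)$ through the surjection, isolates its weight-$\ell$ component by a van der Monde argument, and uses linkage together with $\ell\ge -1$ to conclude that this component is already a highest weight vector. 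You instead compute the functor outright, identifying $\Hom_{\U_q(\fsl_2)}(M(\ell),N)$ via Frobenius reciprocity with $N\mapsto N_\ell^{\chi_\ell}$ and observing that this composite of block projection and weight-space extraction is exact. Both arguments rest on the same structural fact — that in the block $\CO_{int}^{\chi_\ell}$ with $\ell\ge -1$ no weight exceeds $\ell$ — but your version avoids the van der Monde step and yields the reusable natural isomorphism $\Hom_{\U_q(\fsl_2)}(M(\ell),N)\cong N_\ell^{\chi_\ell}$ (naturality in $N$ is what transfers exactness, and is worth stating explicitly). The only cosmetic gap is at the very end: you establish equality of dimensions with $\dim M_0$ rather than exhibiting the isomorphism; over the field $\CK_0$ this suffices for the statement as written, and the paper's remark that the weight-$\ell$ space of $M(\ell)\otimes M$ contributing to the surviving filtration steps is $v_+\otimes M_0$ supplies the canonical map if one wants it.
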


\begin{proof}
Given the linkage principle in Definition \ref{def:link}, the usual arguments from the context of Lie algebras may be adapted to prove the lemma. 

We first consider part (1). 
Let $\psi: M\twoheadrightarrow   N$ be any surjection in $\CO_{int}$. Then $\psi(M)= N$, so that $\psi(M^{\chi_\ell})=N^{\chi_\ell}$.  
If $\phi: M(\ell)\longrightarrow N$, then clearly $\phi(M(\ell))\subseteq N^{\chi_\ell}$ and hence to prove (1), we may suppose that $M$ and $N$ are
in the block $\CO_{int}^{\chi_\ell}$ of $\CO_{int}$. 
Since the image $\phi(m_+)$ of the highest weight vector $m_+$ of $M(\ell)$ is in $N=\psi(M)$, 
 $\phi(m_+)=\psi(v)$ for some
$v\in M$. Writing $v=v_0+v_1+\dots+v_k$, where $v_0$ has weight $\ell$ and for $i\geq 1$, $v_i$ has weight $\ell_i\neq \ell$
(the $\ell_i$ being pairwise distinct), we see that for $j=1,2,3,\dots, $ we have 
$q^{j\ell}(\psi(v)-\psi(v_0))+q^{j\ell_1}\psi(v_1)+\dots + q^{j\ell_k}\psi(v_k)=0$. A van der Monde type argument shows that
$\psi(v_i)=0$ for $i>0$ and that $\psi(v)=\psi(v_0)$. Replacing $v$ by $v_0$, we may therefore assume that $v$ is a weight vector 
of weight $\ell$.

Now the subspace $\U_q(\fb) v\subseteq M$ contains a highest weight vector $v'$ of weight (say) $\ell'$. Thus 
$v'$ is an eigenvector of $z$, with eigenvalue $\chi_{\ell'}$. Since $M$
is in the block $\CO_{int}^{\chi_\ell}$, we must have $\chi_\ell=\chi_{\ell'}$, i.e. $\ell'$ is linked to $\ell$. 
But $\ell'\ge \ell\ge -1$,  so $\ell'=\ell$, and hence $v$ is a highest weight vector in $M$. 

It follows that the unique
homomorphism $\phi': M(\ell)\longrightarrow M$ with $\phi'(m_+)=v$,  renders the following diagram commutative. 
\[
\xymatrix{ 
&M(\ell)\ar[d]^\phi\ar@{-->}[ld]_{\phi'}\\
M\ar@{->>}[r]_\psi &N.
}
\]
This proves that $M(\ell)$ is projective in $\CO_{int}$. 

We now prove part (2). 
Since the module $M$ given in part (2) is finite dimensional, $M(\ell)\otimes M= \U_q(\fsl_2)\otimes_{\U_q(\fb)}((\CK_0)_\ell\otimes M)$. 
Applying the induction functor to the composition series of $(\CK_0)_\ell\otimes M$ as $\U_q(\fb)$-module leads to a filtration 
\[
M(\ell)\otimes M:=W_0\supset W_1\supset W_2 \supset \dots \supset W_{D-1}\supset W_D=0, \quad D=\dim M, 
\]
where the $W_i$ are in $\CO_{int}$ and $W_i/W_{i+1}=M(\ell_i)$ for some integer $\ell_i$. By the projectivity of $M(\ell)$, we have 
\[
\Hom_{\U_q(\fsl_2)}(M(\ell), W_i) =\Hom_{\U_q(\fsl_2)}(M(\ell), M(\ell_i))\oplus \Hom_{\U_q(\fsl_2)}(M(\ell), W_{i+1})
\]
 as vector space,  and hence 
\[
\Hom_{\U_q(\fsl_2)}(M(\ell), M(\ell)\otimes M) = \bigoplus_i \Hom_{\U_q(\fsl_2)}(M(\ell), M(\ell_i)). 
\]
But $\Hom_{\U_q(\fsl_2)}(M(\ell), M(\ell_i))\ne 0$ only when $\ell$ and $\ell_i$ are linked. 
The condition $\ell\ge -1$ requires $\ell_i=\ell$, and in this case,  $\Hom_{\U_q(\fsl_2)}(M(\ell), M(\ell_i))$
 is one dimensional.  Since the weight space of weight  $\ell$ in $M(\ell)\ot M$ is $v_+\ot M_0$, we have
$
\Hom_{\U_q(\fsl_2)}(M(\ell), M(\ell)\otimes M) \cong M_0. 
$
\end{proof}

\noindent{\em Some properties of $V=V(1)$}. There exists a basis $\{v_1, v_{-1}\}$ of $V$ such that the corresponding representation is given by 
\[
K \mapsto \begin{pmatrix} q & 0\\ 0 & q^{-1}\end{pmatrix}, \quad E\mapsto \begin{pmatrix} 0 & 1\\ 0 & 0 \end{pmatrix}, 
\quad F\mapsto \begin{pmatrix} 0 & 0\\ 1& 0 \end{pmatrix}. 
\]
We have $V\otimes V=V(2)\oplus V(0)$, with the $1$-dimensional submodule spanned by
\[
c_0:= -q v_1\otimes v_{-1} + v_{-1}\otimes v_1. 
\]
Since $V$ is self-dual, there exists a unique (up to scalar multiple)  non-degenerate  invariant bilinear form 
$(\ , \ ): V\times V\longrightarrow \CK_0$ given by 
\[
(v_1, v_{-1})=-q (v_{-1}, v_1)=1, \quad (v_1, v_1)=(v_{-1}, v_{-1})=0.
\]
Here invariance of the form means that  
$(X v, w) = (v, S(X) w)$ for all $v, w\in V$ and $X\in \U_q(\fsl_2)$, where $S$ is the antipode of $\U_q(\fsl_2)$.

The following maps are clearly $\U_q(\fsl_2)$-morphisms
\begin{eqnarray}\label{eq:curls}
&\check{C}: \CK_0\longrightarrow V\otimes V, \quad 1\mapsto c_0,  \label{eq:cup}\\
&\hat{C}:  V\otimes V \longrightarrow \CK_0, \quad v\otimes w\mapsto (v, w), \label{eq:cap}
\end{eqnarray}
as are the maps $\eta, \zeta: V\longrightarrow V$ respectively defined by the compositions 
\[
\begin{aligned}
\eta: V\stackrel{\sim}\longrightarrow \CK_0\otimes V \stackrel{\check{C}\otimes\id}\longrightarrow V
 \otimes V \otimes V \stackrel{\id\otimes \hat{C}}\longrightarrow V, 
\\
\zeta: V\stackrel{\sim}\longrightarrow V\otimes\CK_0 \stackrel{\id\otimes \check{C}}\longrightarrow V 
\otimes V \otimes V \stackrel{\hat{C}\otimes \id}\longrightarrow V. 
\end{aligned}
\]
The statements in the lemma below are all either well-known  or easily checked.
\begin{lemma} \label{eq:cup-cap} Let $e= \check{C}\circ\hat{C}$. The following relations hold. 
\begin{eqnarray}
 \hat{C}(\check{C})=-(q+q^{-1}), \label{eq:loop-c}\\
\eta=\zeta=\id_V,  \label{eq:straighten}\\
e^2=-(q+q^{-1}) e.  \label{eq:e-op}
\end{eqnarray}
\end{lemma}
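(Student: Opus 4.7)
The plan is to verify each of the three identities by direct computation in the basis $\{v_1, v_{-1}\}$ of $V$, using the explicit formulas for the bilinear form and for $c_0 = -q v_1\otimes v_{-1} + v_{-1}\otimes v_1$. These are all essentially bookkeeping checks, so the only thing to get right is the signs and powers of $q$; there is no real obstacle.

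First I would dispatch \eqref{eq:loop-c}. By definition $\hat{C}(\check{C}(1)) = \hat{C}(c_0) = -q\,(v_1,v_{-1}) + (v_{-1},v_1)$. From the given form, $(v_1,v_{-1}) = 1$ and $(v_{-1},v_1) = -q^{-1}$, so this equals $-q - q^{-1} = -(q+q^{-1})$.

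Next, relation \eqref{eq:e-op} is immediate from \eqref{eq:loop-c}: since $\hat{C}\circ\check{C}$ is multiplication by the scalar $-(q+q^{-1})$, we get
\[
e^2 = \check{C}\circ\hat{C}\circ\check{C}\circ\hat{C} = \check{C}\circ\bigl(-(q+q^{-1})\id_{\CK_0}\bigr)\circ\hat{C} = -(q+q^{-1})\,e.
\]

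For the straightening relation \eqref{eq:straighten}, I would evaluate $\eta$ and $\zeta$ on each basis vector. For instance,
\[
\eta(v_1) = (\id\otimes\hat{C})(c_0\otimes v_1) = -q\,(v_{-1},v_1)\,v_1 + (v_1,v_1)\,v_{-1} = -q\cdot(-q^{-1})\,v_1 = v_1,
\]
and similarly $\eta(v_{-1}) = -q\,(v_{-1},v_{-1})\,v_1 + (v_1,v_{-1})\,v_{-1} = v_{-1}$. For $\zeta$, the analogous computation gives $\zeta(v_1) = -q\,(v_1,v_1)\,v_{-1} + (v_1,v_{-1})\,v_1 = v_1$ and $\zeta(v_{-1}) = -q\,(v_{-1},v_1)\,v_{-1} + (v_{-1},v_{-1})\,v_1 = v_{-1}$. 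Hence $\eta = \zeta = \id_V$.

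No step is genuinely hard; the only mildly delicate point is keeping track of the convention that the form is not symmetric but satisfies $(v_1,v_{-1}) = -q\,(v_{-1},v_1)$, which is exactly what makes $c_0$ invariant and what makes the $-q$ in the definition of $c_0$ cancel correctly in the straightening computation.
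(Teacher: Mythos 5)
Your computations are correct and complete; the paper itself offers no proof (it states only that these relations are "either well-known or easily checked"), and your direct verification in the basis $\{v_1,v_{-1}\}$, using $(v_1,v_{-1})=1$, $(v_{-1},v_1)=-q^{-1}$ and the formula for $c_0$, is exactly the intended check. The deduction of \eqref{eq:e-op} from \eqref{eq:loop-c} by associativity is also the right shortcut.
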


\subsubsection{The universal $R$-matrix}
As the universal $R$-matrix of $\U_q(\fsl_2)$ will play an important role in our development, we give some explicit
information concerning it.  Following  \cite{LZ06}, we 
 define a functorial linear operator $\Xi$ as follows.  For any pair of modules $M_1, M_2$ in $\CO_{int}$,  and weight vectors 
$w_1\in M_1$ and $w_2\in M_2$ with weights $k_1, k_2$ respectively,
\begin{eqnarray}
\Xi_{M_1, M_2}: M_1\otimes M_2 \longrightarrow M_1\otimes M_2, \quad w_1\otimes w_2 \mapsto q^{\frac{k_1 k_2}{2}} w_1\otimes w_2.
\end{eqnarray}
The universal $R$-matrix is the functorial linear isomorphism
\begin{eqnarray}\label{eq:univ-R}
R = \Xi \left(\sum_{j=0}^\infty \frac{(q-q^{-1})^j}{\qint{j}!} E^j\otimes F^j\right),
\end{eqnarray}
where $\qint{j}!=\prod_{k=0}^j\qint{k}$ with $\qint{k}=\frac{1-q^{-2k}}{1-q^{-2}}$. [Warning: this is not the usual definition of $q$-numbers.]
For $i=1,2$, denote the representation of $\U_q(\fsl_2)$ on $M_i$ by $\pi_i$. 
Then the universal $R$-matrix acts on $M_1\otimes M_2$ by 
\[
R_{M_1, M_2} = \Xi \left(\sum_{j=0}^\infty \frac{(q-q^{-1})^j}{\qint{j}!} \pi_1(E^j)\otimes \pi_2(F^j)\right).
\]
This is well defined, since $E$ and $F$ act locally nilpotently.  The universal $R$-matrix has the following properties. 
\begin{eqnarray}
&R_{M_1, M_2}  (\pi_1\otimes \pi_2)\Delta(x) =  (\pi_1\otimes \pi_2)\Delta'(x)R_{M_1, M_2}, \quad \forall x\in \U_q(\fsl_2);  \\
&R_{M_1\otimes M_2, M_3} = R_{M_1, M_3} R_{M_2, M_3}, \quad 
R_{M_1, M_2\otimes M_3} = R_{M_1, M_3} R_{M_1, M_2}, \\
&R_{M_1, M_2} R_{M_1, M_3} R_{M_2, M_3} =R_{M_2, M_3}R_{M_1, M_3} R_{M_1, M_2}, 
\end{eqnarray}
where the last two equations are equalities of automorphisms of $M_1\otimes M_2\otimes M_3$. 
The last equation is the celebrated Yang-Baxter equation. 

Let $
P_{M_1, M_2}: M_1\otimes M_2 \longrightarrow M_2\otimes M_1
$
be the permutation $w\otimes w' \mapsto w'\otimes w$,  and denote
\[
\begin{aligned}
&\check{R}_{M_1, M_2}= P_{M_1, M_2}R_{M_1, M_2}: M_1\otimes M_2 \longrightarrow M_2\otimes M_1. \end{aligned}
\]
Then 
\begin{eqnarray}\label{eq:commute}
\check{R}_{M_1, M_2}  (\pi_1\otimes \pi_2)\Delta(x) - (\pi_2\otimes \pi_1)\Delta(x)\check{R}_{M_1, M_2}=0, \quad \forall x\in \U_q(\fsl_2), 
\end{eqnarray}
and the Yang-Baxter equation becomes the  following ``braid relation''  among isomorphisms
$M_1\otimes M_2\otimes M_3\longrightarrow M_3\otimes M_2\otimes M_1$ in $\CO_{int}$.
\[
\begin{aligned}
(\check{R}_{M_2, M_3}\otimes\id_{M_1})  (\id_{M_2}\otimes\check{R}_{M_1, M_3})  
(\check{R}_{M_1, M_2}\otimes\id_{M_3})\\
=
(\id_{M_3}\otimes\check{R}_{M_1, M_2})  (\check{R}_{M_1, M_3}\otimes\id_{M_2})(\id_{M_1}\otimes\check{R}_{M_2, M_3}).   
\end{aligned}
\]

For $M_1=M_2=V(1)$, by looking at the action of  $q^{\frac{1}{2}}\check{R}_{V, V}$ on the respective highest weight 
vectors of the simple submodules of $V\otimes V=V(2)\oplus V(0)$, it becomes evident that $q^{\frac{1}{2}}\check{R}_{V, V}$ 
 has eigenvalues $q$ and $-q^{-1}$ on $V(2)$ and $V(0)$ respectively. Bearing in mind that $\check{C}\circ\hat{C}=-(q+q\inv)$
times the projection to $V(0)$ this may be restated as follows.
\begin{lemma}\label{eq:normal-R} The $R$-matrix $\check{R}_{V, V}$ satisfies the following relation. 
\begin{eqnarray}\label{lem:normal-R}
q^{\frac{1}{2}}\check{R}_{V, V}=q+\check{C}\circ\hat{C},  
\end{eqnarray}
where $\check{C}$ and $\hat{C},$ are as defined in \eqref{eq:curls}. 
\end{lemma}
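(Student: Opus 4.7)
The plan is to exploit the decomposition $V\ot V=V(2)\oplus V(0)$ together with Schur's lemma to reduce the claimed identity to an equality of scalars on each simple summand. Both sides are $\U_q(\fsl_2)$-endomorphisms of $V\ot V$: the left-hand side by \eqref{eq:commute}, and the right-hand side because $\check{C}$ and $\hat{C}$ are $\U_q(\fsl_2)$-module morphisms as observed in the text preceding Lemma~\ref{eq:cup-cap}. By Schur's lemma it therefore suffices to evaluate each side on a highest weight vector of each summand, for which I will take $v_1\ot v_1\in V(2)$ and $c_0=-qv_1\ot v_{-1}+v_{-1}\ot v_1\in V(0)$.

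For the right-hand side, $\hat{C}$ factors through the projection onto the trivial summand $V(0)$, since an invariant bilinear form must vanish on any simple summand not isomorphic to the trivial module. Hence $\check{C}\circ\hat{C}$ annihilates $V(2)$, while on $V(0)$ the relation \eqref{eq:loop-c} gives $\check{C}\circ\hat{C}(c_0)=\hat{C}(c_0)\,c_0=-(q+q\inv)c_0$. Consequently $q+\check{C}\circ\hat{C}$ acts as the scalar $q$ on $V(2)$ and as $-q\inv$ on $V(0)$.

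For the left-hand side, I will apply the explicit formula \eqref{eq:univ-R}. On $v_1\ot v_1$ only the $j=0$ term contributes, since $Ev_1=0$, and the diagonal factor $\Xi$ produces $q^{1/2}$; hence $q^{1/2}\check{R}_{V,V}(v_1\ot v_1)=q\,v_1\ot v_1$. On $c_0$ only the terms $j=0,1$ contribute, and a short bookkeeping using $Ev_{-1}=v_1$, $Fv_1=v_{-1}$ together with the permutation $P$ is expected to yield $q^{1/2}\check{R}_{V,V}(c_0)=-q\inv c_0$. Matching the two scalars on each summand then gives the identity on $V\ot V$ by Schur's lemma.

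The argument is essentially routine; the only point requiring care is the bookkeeping of the normalisation conventions in \eqref{eq:univ-R}, namely the half-integer exponents $q^{k_1k_2/2}$ in $\Xi$ and the particular $q$-integer $[j]_q=\frac{1-q^{-2j}}{1-q^{-2}}$, since a misplaced power of $q$ would destroy the eigenvalue identification.
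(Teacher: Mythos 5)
Your proposal is correct and follows essentially the same route as the paper, which likewise observes that $q^{\frac{1}{2}}\check{R}_{V,V}$ has eigenvalues $q$ and $-q^{-1}$ on the summands $V(2)$ and $V(0)$ of $V\otimes V$, while $\check{C}\circ\hat{C}$ is $-(q+q^{-1})$ times the projection onto $V(0)$. The one computation you leave as ``expected'', namely $q^{\frac{1}{2}}\check{R}_{V,V}(c_0)=-q^{-1}c_0$, does check out with the stated normalisations of $\Xi$ and $\qint{j}$ in \eqref{eq:univ-R}.
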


Now let 
\begin{eqnarray}
R^T = \Xi \left(\sum_{j=0}^\infty \frac{(q-q^{-1})^j}{\qint{j}!} F^j\otimes E^j\right),  \label{eq:univ-RT}
\end{eqnarray}
and denote $R^T_{M_1, M_2}= R^T:  M_1\otimes M_2 \longrightarrow M_1\otimes M_2$. Then 
\[
\begin{aligned}
R^T_{M_1, M_2}= P_{M_2, M_1}R_{M_2, M_1} P_{M_1, M_2}: M_1\otimes M_2 \longrightarrow M_1\otimes M_2.
\end{aligned}
\]
Furthermore, $\check{R}_{M_2, M_1}\check{R}_{M_1, M_2}=R^T_{M_1, M_2}R_{M_1, M_2}$, and 
\[
\begin{aligned}
R^T_{M_1, M_2}R_{M_1, M_2}
&= \Delta(v^{-1}) (v\otimes v): M_1\otimes M_2 \longrightarrow M_1\otimes M_2,
\end{aligned}
\]
where $v$ is Drinfeld's central element of $\U_q(\fsl_2)$ (see \cite{LZ06}).  The element $v$ acts on any highest 
weight module with highest weight $\ell$ (i.e., cyclically generated by a highest weight vector of
weight $\ell$) in $\CO_{int}$ as multiplication by the scalar  $q^{ -\frac{1}{2}\ell (\ell+2)}$.

Assume that $M_1$ and $M_2$ are both highest weight modules with highest weights 
$\ell_1$ and $\ell_2$ respectively. Then $R^T_{M_1, M_2}R_{M_1, M_2}$ acts on a highest weight submodule 
$M'$ of $M_1\otimes M_2$  with highest weight $\ell$ as  
\begin{eqnarray}\label{eq:eigen}
\begin{aligned}
&R^T_{M_1, M_2}R_{M_1, M_2}|_{M'} =  q^{\chi(\ell, \ell_1, \ell_2)} \id_{M'}, \quad \text{where}\\
&\chi(\ell, \ell_1, \ell_2) = \frac{\ell(\ell+2)}{2} -\frac{\ell_1(\ell_1+2)}{2} - \frac{\ell_2(\ell_2+2)}{2}.
\end{aligned}
\end{eqnarray}
If $m_+$ is the highest weight vector of $M_1$, and $v\in M_2$ is a vector of weight $j$, we have
$Km_+=q^{\ell_1}m_+$ and $K v= q^j v$.  Then 
\begin{eqnarray}\label{eq:RtR}
R^T_{M_1, M_2}R_{M_1, M_2}(m_+\otimes v) =\sum_{k=0}^\infty\frac{(q-q^{-1})^k q^{ j \ell_1 
+k(\ell_1-j-2k)}}{\qint{k}!}F^k m_+\otimes E^k v.
\end{eqnarray}

\subsection{A tensor functor}

We again fix $V=V(1)$, and for any integer $\ell$, let $M$ be either $M(\ell)$ or $V(\ell)$. 
We shall adopt the following notation. Recall that $\cC=\{m, v\}$; for any sequence $A=(a_1, a_2, \dots, a_r)$ 
with $a_j\in\cC$, write
\[
U^A = U^{a_1}\otimes U^{a_2}\otimes \dots \otimes U^{a_r},
\]
where $U^m=M$ and $U^v=V$.
We set $U^\emptyset=\CK_0$ for the empty sequence $\emptyset$.

Recall that there exists a canonical tensor functor from the category of directed coloured ribbon 
graphs to the category of finite dimensional representations of any quantum group, see  \cite[Theorem 5.1]{RT}.  
Adapting that functor to our context yields the following result. 

\begin{theorem} \label{thm:RT} Let $V=V(1)$, and for any integer $\ell$ let $M$ be either $M(\ell)$ or $V(\ell)$.
There exists a covariant linear functor   
$
\widehat\CF:  \RTC\longrightarrow \CO_{int},
$
which is defined by the following properties.  
\begin{enumerate}
\item The functor respects the tensor products of $\RTC$ and $\CO_{int}$;  
\item $\widehat\CF$ sends 
the object $A$ of  $\RTC$ to $\widehat\CF(A)=U^A$; and 

\item
$\widehat\CF$ maps the generators of the morphism spaces as indicated below.

\[
\begin{aligned}
\begin{picture}(60, 70)(0,0)
\put(0, 0){\line(0, 1){60}}
\put(-10, 40){$a$}
\put(15, 30){$\mapsto \  \id_{U^a}$, }
\end{picture}
&\quad\quad&
\begin{picture}(80, 70)(-30,0)
\qbezier(-35, 60)(-35, 60)(-15, 0)
\qbezier(-15, 60)(-15, 60)(-22, 33)
\qbezier(-35, 0)(-35, 0)(-26, 25)
\put(-15, 40){$b$ }
\put(-40, 40){$c$ }
\put(0, 30){$\mapsto \  \check{R}_{U^b, U^c}$, }
\end{picture}
&\qquad\qquad&
\begin{picture}(80, 70)(-70,0)
\qbezier(-90, 0)(-90, 0)(-70, 60)
\qbezier(-90, 60)(-90, 60)(-83, 33)
\qbezier(-70, 0)(-70, 0)(-78, 26)
\put(-70, 40){$b$ }
\put(-95, 40){$c$ }
\put(-55, 30){$\mapsto \  (\check{R}_{U^b, U^c})^{-1}$, }
\end{picture}\\
\end{aligned}
\]
for all $a, b,  c \in \cC$ with $b,  c$ not both equal to $m$,  and
\[
\begin{aligned}
\begin{picture}(100, 50)(0, 10)
\qbezier(0, 60)(15, -10)(30, 60)
\put(13, 12){$v$}
\put(45, 35){$\mapsto \ \check{C}$,}
\end{picture}\quad  
\begin{picture}(130, 60)(-10, -10)
\qbezier(0, 0)(15, 70)(30, 0)
\put(12, 40){$v$}
\put(45, 15){$\mapsto \hat{C}$,}
\end{picture}
\end{aligned}
\]
where the maps $\check{C}: \CK_0\longrightarrow V\otimes V$ and $\hat{C}:   V\otimes V \longrightarrow \CK_0$ 
are defined by \eqref{eq:cup} and \eqref{eq:cap} respectively. 
\end{enumerate}
\end{theorem}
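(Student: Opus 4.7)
The plan is to use the presentation of $\RTC$ given in \thmref{thm:tensor-cat}: I would define $\widehat\CF$ on objects by $A \mapsto U^A$ (extending multiplicatively for sequences) and on the generating morphisms as specified in (3) of the theorem, then extend by $\CK_0$-linearity, tensor product, and composition. To show that this produces a well-defined covariant tensor functor, I need only check that the images of the generators satisfy the defining relations (a)--(e) of \thmref{thm:tensor-cat}(3). The crucial structural observation is that in $\RTC$, arcs coloured by $m$ are required to be vertical, never cross one another, and never participate in cups or caps. Consequently, no duality or evaluation morphism for $M$ is ever needed, which is what allows the target category to contain infinite-dimensional objects such as Verma modules.

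The verification of the relations proceeds as follows. Relation (a) is automatic because $(\check{R}_{U^b,U^c})^{-1}$ is by definition inverse to $\check{R}_{U^b,U^c}$. The braid relation (b) is precisely the Yang--Baxter equation, established in \propref{prop:braid}(1), which applies in $\CO_{int}$ whenever at least one of the three factors is finite dimensional; since any triple of labels $(a,b,c)$ appearing in (b) has at most one $m$, this is satisfied. Relation (c) (straightening) reduces to $\eta=\zeta=\id_V$, which is the content of \eqnref{eq:straighten} in \lemref{eq:cup-cap}. Relation (e) (twist) follows because the two nested loops in the picture translate, after applying the ribbon structure on $V$, to the composition of the Drinfeld ribbon endomorphism with its inverse on $V$; equivalently, it follows from the basic identity $\check{R}_{V,V}\circ \check{R}_{V,V}^{-1}=\id$ combined with the straightening identity already handled in (c).

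The main obstacle is the sliding relations (d), particularly when the sliding strand carries the colour $a=m$. Algebraically, each sliding relation translates to a compatibility of the form
\[
(\id_V\ot \check{R}_{U^a,V})\circ(\check{C}\ot\id_{U^a})=(\check{R}_{V,U^a}\ot \id_V)\circ(\id_{U^a}\ot\check{C})
\]
and the analogous identities with $\check{C}$ replaced by $\hat{C}$ or with $\check{R}$ replaced by $\check{R}^{-1}$. For $a=v$ these are the standard rigidity-plus-braiding axioms in the ribbon category of finite-dimensional type-$\mathbf{1}$ modules. For $a=m$, I would derive them from two ingredients: first, the naturality of the universal $R$-matrix, which gives $\check{R}_{W,N}\circ (f\ot\id_N)=(\id_N\ot f)\circ \check{R}_{W',N}$ for any $\U_q(\fsl_2)$-morphism $f\colon W\to W'$, applied to $f=\check{C}$ or $\hat{C}$; second, the hexagon identities for $\check{R}$ applied to the triple $(V,V,M)$, together with the fact that $\check{R}_{\CK_0,M}=\id_M$ since $\CK_0$ is the unit object. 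These purely formal manipulations are valid in $\CO_{int}$ regardless of whether $M$ is finite-dimensional, because the relevant operators are defined via the explicit formula \eqref{eq:univ-R} using the local nilpotency of $E,F$ on integral weight modules; no trace or evaluation on $M$ is ever required. Once (d) is established in this way, all relations of $\RTC$ are preserved and $\widehat\CF$ is the desired tensor functor; in essence, this is a restricted form of the Reshetikhin--Turaev functor of \cite[Thm.~5.1]{RT}, adapted so that the $m$-coloured strands carry a module for which only braiding, not duality, is required.
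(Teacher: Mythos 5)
Your proposal is correct in outline and reaches the same conclusion, but it handles the two nontrivial relations by a genuinely different route. For the sliding relations (d) — the crux of the proof — the paper argues by brute force: it writes out the four operators $\Psi_\pm,\Psi'_\pm$ explicitly on weight vectors $v\otimes w\otimes v'$ using the formula \eqref{eq:univ-R} for the universal $R$-matrix (only the $j=0,1$ terms survive on $V=V(1)$) and verifies equality term by term using the identities $(v,Ev')=-q^{\ell'}(Ev,v')$ and $(Fv,v')=-q^{-1}(v,Fv')$ for the invariant form. You instead derive them from naturality of $\check R$ applied to $f=\check C,\hat C$, the hexagon identities $R_{M_1\otimes M_2,M_3}=R_{M_1,M_3}R_{M_2,M_3}$ (which the paper records), and $\check R_{\CK_0,M}=\id_M$ — the standard braided-category argument, valid here because all these properties of the universal $R$-matrix hold on $\CO_{int}$ via local nilpotency, exactly as you say. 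Your route is more conceptual and makes transparent why no duality on $M$ is ever invoked; the paper's computation is more self-contained and, as a by-product, produces the explicit formulas that are reused later (e.g.\ in the proof of Theorem \ref{thm:functor-quot} and Theorem \ref{thm:main}). Your treatment of the twist relation (e) via the Drinfeld/ribbon element is also the paper's stated alternative to its explicit computation $\varphi=-q^{3/2}\id_V$, $\varphi'=-q^{-3/2}\id_V$.

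One small caution on (e): your parenthetical claim that the twist relation follows ``equivalently'' from $\check R_{V,V}\circ\check R_{V,V}^{-1}=\id$ together with straightening does not stand on its own. The two curls are not adjacent inverse crossings; each is a composite of a cup, a crossing and a cap, and cancelling them requires either the sliding relations, the ribbon-element argument, or the direct scalar computation. Since your primary justification (ribbon structure on the simple module $V$, so each curl is a scalar and the two scalars are mutually inverse) is sound, this is a blemish in the exposition rather than a gap, but the ``equivalently'' should be dropped or replaced by an appeal to (d).
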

\begin{proof} 
Let us prove part (1) of the theorem first. 
The functor clearly respects the tensor products for objects, since for any objects $A$ and $B$ in $ \RTC$, we have 
\[
\widehat\CF(A\otimes B)= U^{(A, B)}= U^A\otimes U^B.
\]
Now we require that $\widehat\CF$ respect tensor products for morphisms, that is, for any two morphisms $D, D'$ in $ \RTC$, 
\be\label{eq:rtp}
\widehat\CF(D\otimes D')=\widehat\CF(D)\otimes \widehat\CF(D').
\ee
Equation \eqref{eq:rtp} and property (3) of the statement, which defines the images of the generators of morphisms under 
$\widehat\CF$,  define the functor uniquely. Thus what remains to be shown is that the functor is well 
defined on morphisms, that is, we need to show that  the relations in part (3) of Theorem \ref{thm:tensor-cat} 
are preserved by $\widehat\CF$. 

Clearly $\widehat\CF$ preserves relations (a). It also preserves relations (b) since the $R$-matrices 
satisfy the Yang-Baxter equation \eqref{eq:YBE}.  It follows from \eqref{eq:straighten} 
that the straightening relations (c) are also preserved. 

To prove the sliding relations, let us denote 
\[
\Psi_+:=\widehat{\CF}\left(
\begin{picture}(80, 30)(-12,15)
\qbezier(0, 45)(0, 45)(22, 0)
\qbezier(0, 0)(10, 13)(12, 15)
\qbezier(17, 21)(35, 50)(60, 0)
\put(-8, 35){$a$}
\put(32, 18){$v$}
\end{picture}
\right),
\qquad \quad
\Psi_+':=\widehat{\CF}\left(
\begin{picture}(80, 30)(-12,15)
\qbezier(0, 0)(25, 50)(45, 20)
\qbezier(49, 15)(50, 15)(60, 0)
\qbezier(60, 45)(60, 45)(38, 0)
\put(62, 35){$a$}
\put(22, 18){$v$}
\end{picture}
\right), 
\]
\[
\Psi_-:=\widehat{\CF}\left(
\begin{picture}(80, 30)(-12,15)
\qbezier(0, 0)(30, 60)(60, 0)
\qbezier(0, 45)(0, 45)(10, 22)
\qbezier(15, 15)(15, 15)(22, 0)
\put(-8, 35){$a$}
\put(32, 18){$v$}
\end{picture}\right), 
\qquad \quad
\Psi'_-:=\widehat{\CF}\left(
\begin{picture}(80, 30)(-8,15)
\qbezier(0, 0)(30, 60)(60, 0)
\qbezier(60, 45)(60, 45)(50, 22)
\qbezier(45, 15)(45, 15)(38, 0)
\put(62, 35){$a$}
\put(22, 18){$v$}
\end{picture}\right).
\]
Let $v, v'\in V$ be vectors with weights $\ell$ and $\ell'$ respectively, and let $w\in U^a$ be a vector with weight $k$.  Then we have 
\[
\begin{aligned}
\Psi_+(v\otimes w\otimes v') &= q^{\frac{1}{2} k \ell} (v, v') w + (q-q^{-1}) q^{\frac{1}{2} (k-2)(\ell+2)}(E v, v') F w, \\
\Psi'_+(v\otimes w\otimes v') &= q^{-\frac{1}{2} k \ell'} (v, v') w - (q-q^{-1}) q^{-\frac{1}{2} k\ell'}(v, E v') F w.
\end{aligned}
\]
The invariance of the bilinear form implies that   $(v, E v')= -(E v, K v')  = -q^{\ell'}(E v, v')$.  Also note that 
$(v, v')=0$ unless $\ell+\ell'=0$,  and  $(E v, v')= (v, E v')=0$ unless $\ell=\ell'=-1$. Hence  
\[
\Psi_+(v\otimes w\otimes v') = q^{\frac{1}{2} k \ell} (v, v') w + (q-q^{-1}) q^{\frac{1}{2} (k-2)}(E v, v') F w=
\Psi'_+(v\otimes w\otimes v'). 
\]
Similarly we have 
\[
\begin{aligned}
\Psi_-(v\otimes w\otimes v') &
=q^{-\frac{1}{2}k \ell}(v, v') w-(q-q^{-1})q^{-\frac{1}{2}k\ell}(Fv,v')Ew,\\
\Psi'_-(v\otimes w\otimes v') &
=q^{\frac{1}{2}k \ell'}(v, v') w+(q-q^{-1})q^{\frac{1}{2}(k +2)(\ell' -2)}(v,Fv')Ew.
\end{aligned}
\]
In this case,  $(Fv,v')=-(v,KFv')=-q^{-1}(v,Fv')$, and  $(Fv,v')=(v,Fv')=0$ unless  $\ell=\ell'=1$.
We still have $(v, v')=0$ unless $\ell+\ell'=0$.  Hence  
\[
\Psi_-(v\otimes w\otimes v') = q^{-\frac{1}{2} k \ell} (v, v') w + (q-q^{-1}) q^{-\frac{1}{2} (k+2)}(v, F v') E w=
\Psi'_-(v\otimes w\otimes v'). 
\]

Now consider the twists.  Let
 \[
 \varphi:=\widehat{\CF}\left(\begin{picture}(50, 40)(-10,80)
 \qbezier(0, 100)(-5, 110)(-5, 120)
\qbezier(0, 100)(0, 100)(20, 70)
\qbezier(0, 70)(0, 70)(8, 82)
\qbezier(0, 70)(-5, 60)(-5, 50)
\qbezier(20, 100)(20, 100)(13, 88)
\qbezier(20, 100)(30, 110)(32, 85)
\qbezier(20, 70)(30, 60)(32, 85)
\put(22, 108){$v$}
\end{picture}
\right),
\qquad\quad 
\varphi':=\widehat{\CF}\left(\begin{picture}(50, 40)(-12,30)
\qbezier(20, 50)(0, 20)(0, 20)
\qbezier(0, 50)(0, 50)(8, 38)
\qbezier(12, 33)(20, 20)(20, 20)
\qbezier(20, 50)(30, 60)(32, 35)
\qbezier(20, 20)(30, 10)(32, 35)
\qbezier(-5, 0)(-5, 10)(0, 20)
\qbezier(0, 50)(-5, 60)(-5, 70)
\put(23, 57){$v$}
\end{picture}\right), 
 \]
which are scalar multiples of $\id_V$.  By direct computations, one can verify that 
\[
 \varphi v_1 = - q^{\frac{3}{2}} v_1, \quad \varphi'v_{-1}=-q^{-\frac{3}{2}} v_{-1}. 
 \]
 Hence $\varphi= - q^{\frac{3}{2}}\id_V$, $\varphi'=-q^{-\frac{3}{2}} \id_V$, and $\varphi\circ\varphi'=\id_V$. 
 Another way to compute this is to use the well known relationship between the $R$-matrix and  Drinfeld's central element. 
By taking into account the $q$-skew nature of the bilinear form,  we immediately obtain 
\[
\varphi= - q^{\frac{3}{2}} \id_V, \quad  \varphi'= - q^{-\frac{3}{2}} \id_V.
\]

This completes the proof of Theorem \ref{thm:RT}.
\end{proof}

\begin{remark} \label{rem:norm-factors} 
Note that one has the freedom of multiplying the $R$-matrices by an invertible 
scalar in the definition of the representation of $\CG_r$ in Lemma \ref{lem:braid-rep}.  
However, this is disallowed by the sliding relations in the definition of  the functor $\widehat{\CF}$.
\end{remark}

\begin{theorem}\label{thm:functor-quot}
The functor $\widehat\CF:  \RTC\longrightarrow \CO_{int}$ of Theorem \ref{thm:RT} 
factors through $\ATLC(q, q^{\ell+1})$.  
\end{theorem}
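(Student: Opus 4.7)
The plan is to verify that each of the additional defining relations of $\ATLC(q,\Omega)$ as a quotient of $\RTC$ (with $\Omega=q^{\ell+1}$) is mapped by $\widehat\CF$ to an identity of morphisms in $\CO_{int}$. By Theorem~\ref{thm:RT}, $\widehat\CF$ is already a well-defined functor on $\RTC$, so only the relations imposed in Definition~\ref{def:ATLC-q} need checking: the two thin--thin skein relations, the pole--thin skein relation involving $\Omega$, the free loop removal, and the tangled loop removal with scalar $-(\Omega+\Omega^{-1})$.

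First I would dispense with the purely thin relations: \eqref{eq:skein1} is exactly the content of Lemma~\ref{eq:normal-R}, namely $q^{1/2}\check R_{V,V}=q\,\id+\check C\circ\hat C$, its inverse version \eqref{eq:skein2} follows formally using \eqref{eq:e-op}, and the free loop removal \eqref{eq:flr} is precisely \eqref{eq:loop-c}. Next, for the pole--thin skein relation I would identify the three diagrams appearing in it with $\check R_{V,M}^4$, $\check R_{V,M}^2$, and $\id$ respectively. Since $M\in\{M(\ell),V(\ell)\}$ has highest weight $\ell$, the tensor product $M\otimes V$ has a two-step composition series (a direct sum when $M=V(\ell)$ with $\ell\ge 0$, and the short exact sequence $0\to M(\ell+1)\to M(\ell)\otimes V\to M(\ell-1)\to 0$ for the Verma case) whose factors have highest weights $\ell+1$ and $\ell-1$. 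By \eqref{eq:eigen} with $\ell_1=\ell$, $\ell_2=1$, the operator $\check R_{V,M}^2$ acts on these as $q^{\ell}$ and $q^{-\ell-2}$, so it satisfies the minimal polynomial
\[
\bigl(\check R_{V,M}^2-q^{\ell}\bigr)\bigl(\check R_{V,M}^2-q^{-\ell-2}\bigr)=0,
\]
which rearranges to $q\check R_{V,M}^4=(q^{\ell+1}+q^{-\ell-1})\check R_{V,M}^2-q^{-1}\id$; for $\Omega=q^{\ell+1}$ this is exactly the pole--thin skein relation.

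The remaining relation --- tangled loop removal --- requires a partial-trace computation. Its image is the morphism $\Phi:=(\id_M\otimes\hat C)\circ(\check R_{V,M}^2\otimes\id_V)\circ(\id_M\otimes\check C)\in\End_{\Uq}(M)$, where $\check R_{V,M}^2$ acts on the first two tensor factors. Since $M$ is indecomposable with one-dimensional $\Uq$-endomorphism ring --- via Schur's lemma for simple $V(\ell)$ with $\ell\ge 0$, via Lemma~\ref{lem:proj}(2) applied with the trivial module for the projective Verma $M(\ell)$ with $\ell\ge-1$, and via analogous block arguments in the remaining cases --- $\Phi$ is a scalar. To identify it I would evaluate on the highest weight vector $m_+$ using $\check C(1)=-qv_1\otimes v_{-1}+v_{-1}\otimes v_1$ together with the explicit formula \eqref{eq:RtR} for $R^TR$. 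The $Fm_+\otimes v_1\otimes v_1$ contribution is annihilated by $\hat C$ since $(v_1,v_1)=0$, and the two surviving terms produce
\[
\Phi(m_+)=-q^{\ell+1}(v_1,v_{-1})\,m_++q^{-\ell}(v_{-1},v_1)\,m_+=-\bigl(q^{\ell+1}+q^{-\ell-1}\bigr)m_+=-(\Omega+\Omega^{-1})m_+.
\]
The main obstacle is precisely this last piece of bookkeeping: verifying that $\Phi$ really is scalar and then evaluating it cleanly. Once this is done, all four additional relations are satisfied, and $\widehat\CF$ descends to the claimed quotient $\ATLC(q,q^{\ell+1})$.
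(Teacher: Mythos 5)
Your proposal is correct and follows the paper's overall strategy: dispose of the thin--thin skein relations and free loop removal via Lemma \ref{eq:normal-R} and \eqref{eq:loop-c}, establish the quadratic relation for $\xi=\check{R}_{V,M}\check{R}_{M,V}$ from the eigenvalue formula \eqref{eq:eigen}, and evaluate the tangled loop as a partial trace on the highest weight vector using \eqref{eq:RtR}. The one place you genuinely diverge is the pole--thin quadratic relation. The paper splits into cases: for $\ell\neq -1$ it uses the direct sum $M(\ell)\otimes V\cong M(\ell+1)\oplus M(\ell-1)$ (the factors lie in distinct blocks) and reads off the two eigenvalues, while for $\ell=-1$, where the extension need not split and the two scalars coincide, it verifies $(q\xi-1)^2=0$ by a direct computation on $m_+\otimes v_{\pm 1}$. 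Your composition-series argument --- an operator acting as $\lambda$ on a submodule and as $\mu$ on the quotient satisfies $(T-\lambda)(T-\mu)=0$ whether or not the extension splits --- treats all cases uniformly, including $\ell=-1$ and the degenerate case $V(0)\otimes V\cong V(1)$, and is arguably cleaner. The only point you should make explicit is that \eqref{eq:eigen} is stated for highest weight \emph{submodules}; to apply it to the quotient $M(\ell-1)$ you need to observe that $R^T_{M,V}R_{M,V}=\Delta(v^{-1})(v\otimes v)$ with $v$ central, so it acts by the weight-determined scalar on any highest weight subquotient. For the tangled loop removal you replace the paper's citation of \cite{ZGB} by the observation that $\End_{\Uq}(M)$ is one-dimensional (Schur's lemma, or the weight-space argument for non-simple Vermas); the evaluation on $m_+$ is then the same computation as the paper's and yields $-(\Omega+\Omega^{-1})$ as required.
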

\begin{proof}
It follows from the property of $\check{R}$ given in Lemma \ref{eq:normal-R} that the functor 
$\widehat\CF$ respects the first two skein relations of $\ATLC(q, \Omega)$.  
It also respects the free loop removal relation by \eqref{eq:loop-c}.  
We now set  
\begin{eqnarray}\label{eq:omega-value}
\Omega=q^{\ell+1}. 
\end{eqnarray}

We want to show that $\widehat\CF$ preserves the third skein relation of $\ATLC(q, \Omega)$. 
We have 
\[
\xi:=\widehat\CF\left(
\begin{picture}(40, 50)(-30,50)
{
\linethickness{1mm}
\put(-15, 65){\line(0, 1){35}}
\put(-15, 25){\line(0, 1){35}}
\put(-15, 0){\line(0, 1){18}}
}
\qbezier(-19, 38)(-35, 30)(-15, 22)
\qbezier(-15, 62)(5, 55)(-11, 42)
\qbezier(-15, 62)(-35, 75)(-19, 85)
\qbezier(0, 100)(-6, 90)(-11, 87)
\qbezier(-15, 22)(-5, 15)(0, 0)
\end{picture}
\right) = \check{R}_{V, M}\check{R}_{M, V}. 
\]
\vspace{.5mm}

In the case $M=M(\ell)$, if $\ell\ne -1$, we have $M\otimes V= M(\ell+1)\oplus M(\ell-1)$.  Hence  
$\xi=\check{R}_{V, M}\check{R}_{M, V}$ has two eigenvalues $q^{\chi_\pm}$, 
which we can compute by using \eqref{eq:eigen} to obtain 
\[
\chi_\pm = \frac{1}{2}(\ell \pm1)(\ell+2\pm 1) - \frac{1}{2}\ell (\ell+2) - \frac{3}{2} =\pm (\ell +1)-1.
\]
Hence $\xi$ satisfies the quadratic relation
\begin{eqnarray}\label{eq:skein-3}
(q\xi-\Omega)(q\xi-\Omega^{-1})=0. 
\end{eqnarray}

If $\ell=-1$, we do not have such a decomposition for $M\otimes V$.  However, we can directly verify the 
skein relation. Since $\check{R}_{V, M}\check{R}_{M, V}$ is a $\U_q(\fsl_2)$-morphism, we only need
 to verify this for the two vectors $m_+\otimes v_1$ and $m_+\otimes v_{-1}$, as they generate $M\otimes V$. 
It is clear that $m_+\otimes v_1$ is an eigenvector of $R^T_{M, V}R_{M, V}$ with eigenvalue $q^{-1}$. 
For the vector $m_+\otimes v_{-1}$, we use \eqref{eq:univ-R} and \eqref{eq:univ-RT} to obtain the following relations. 
\[
\begin{aligned}
R^T_{M, V}R_{M, V}(m_+\otimes v_{-1}) &= q m_+\otimes v_{-1} + q^{-1}(q-q^{-1}) F m_+\otimes v_1,\\
(R^T_{M, V}R_{M, V})^2 (m_+\otimes v_{-1})&= (2-q^{-2}) m_+\otimes v_{-1} + 2q^{-2}(q-q^{-1}) F m_+\otimes v_1.
\end{aligned}
\]
Combining these we arrive at $(\check{R}_{V, M}\check{R}_{M, V}-q^{-1})^2(m_+\otimes v_{-1}) =0$.  
Hence we have proved that   in this case
\[
(q\xi-1)^2=0.
\]

For $M=V(\ell)$, we only need to consider $\ell\ge 0$, since $V(\ell)=M(\ell)$ if $\ell<0$.   
We have $V(\ell)\otimes V= V(\ell+1)\oplus V(\ell-1)$ for $\ell>0$ , and $V(0)\otimes V=V$.  
It follows from these decompositions that the relation \eqref{eq:skein-3} is also satisfied in this case. 

This proves that $\widehat\CF$ preserves the third skein relation for $\ATLC(q, q^{\ell+1})$.

We now verify the tangled loop removal relation for both $M(\ell)$ and $V(\ell)$.  We 
note that for any linear transformation $\phi$ of $V$, 
\[
\hat{C}(\phi\otimes\id_V)\check{C}(1)= -q(v_1 v_{-1}) + (\phi v_{-1}, v_1) = - tr_V(K\phi).
\]
Hence 
\[
\Phi:= \widehat\CF\left(\begin{picture}(75, 50)(-40,45)
{
\linethickness{1mm}
\put(-15, 65){\line(0, 1){35}}
\put(-15, 42){\line(0, 1){58}}
\put(-15, 0){\line(0, 1){36}}
}

\qbezier(-19, 68)(-50, 50)(-15, 38)

\qbezier(-11, 68)(-3, 70)(0, 85)

\qbezier(-15, 38)(-5, 38)(0, 20)

\put(20, 20){\line(0, 1){65}}

\qbezier(0, 20)(10, -10 ) (20, 20)
\qbezier(0, 85)(10, 105) (20, 85)
\end{picture}\right)= - tr_V\left( (1\otimes K)\check{R}_{V, M}\check{R}_{M, V}\right).
\]
It follows from \cite[Proposition 1]{ZGB} that $\Phi$ is a scalar multiple of $\id_M$. 
To compute the scalar, we consider the action of $\Phi$ on the highest weight vector $m_+$ of $M$. Using \eqref{eq:RtR}, we obtain
\[
\Phi m_+= - tr\left( \begin{pmatrix} q & 0\\ 0 & q^{-1}\end{pmatrix} \begin{pmatrix} q^{\ell} & q^{-1}(q-q^{-1})\\ 0 & q^{-\ell}\end{pmatrix}
\right) m_+=-(\Omega+\Omega^{-1})m_+.
\]
This leads to 
$
\Phi= -(\Omega+\Omega^{-1})\id_M,  
$
and hence 
 $\widehat\CF$ respects the tangled loop removal relation. 

This completes the proof of Theorem \ref{thm:functor-quot}. 
\end{proof}

\subsection{An equivalence of categories}

In this section, we take $V=V(1)$ and $M=M(\ell)$ for $\ell\ge -1$.  
Let $\CT$ be the full subcategory of the category $\CO_{int}$ of $\U_q(\fsl_2)$-modules with objects 
$M\otimes V^{\otimes r}$ for $r=0, 1, \dots$. Regard $\TLBC(q, q^{\ell+1})$, the Temperley-Lieb 
category of type $B$, as a full subcategory of $\ATLC(q, q^{\ell+1})$.

Theorem \ref{thm:RT} enables us to define the following functors. 
\begin{definition}
Let  $\CF: \ATLC(q,  q^{\ell+1})\longrightarrow \CO_{int}$ be the functor defined by the commutative diagram 
\[
\xymatrix{ 
\ATLC(q)  \ar@{->>}[d]\ar[r]^{\widehat\CF}& \CO_{int}\\
\ATLC(q, q^{\ell+1}).\ar[ur]_{\CF}&
}
\]

It is clear that $\CF$ sends objects and morphisms in $\TLBC(q, q^{\ell+1})$ to $\CT$.  Hence the 
restriction of the functor $\CF$ to $\TLBC(q, q^{\ell+1})$ leads  to a covariant functor 
\[
\CF':  \TLBC(q, q^{\ell+1})\longrightarrow \CT.
\]
\end{definition}

\begin{theorem}  \label{thm:main} Let $V=V(1)$ and $M=M(\ell)$. Then for all $\ell\ge -1$, the functor 
$
\CF':  \TLBC(q, q^{\ell+1})\longrightarrow \CT
$ 
is an equivalence of categories. 

\end{theorem}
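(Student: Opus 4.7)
My plan is to prove that $\CF'$ is both essentially surjective and fully faithful. Essential surjectivity is immediate, since every object of $\CT$ has the form $M\ot V^{\ot r}=\CF'((m,v^r))$. For fully faithful, I first reduce the problem to a single Hom-space. By \lemref{lem:hom-iso}, all spaces $\Hom_{\TLBC(q,q^{\ell+1})}((m,v^r),(m,v^{s}))$ with fixed $r+s=2N$ have the same dimension. Since $V$ is self-dual as a $\Uq$-module, standard adjunction gives an isomorphism
\[
\Hom_{\Uq}(M\ot V^{\ot r},M\ot V^{\ot s})\cong \Hom_{\Uq}(M,M\ot V^{\ot (r+s)}),
\]
and because $\CF'$ is a tensor functor sending the elementary cup and cap in $\TLBC$ to $\check C$ and $\hat C$ on $V$, it intertwines these two adjunctions. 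It therefore suffices to show that for every $N\ge 0$ the map
\[
\CF'_N\colon W(2N):=\Hom_{\TLBC(q,q^{\ell+1})}(m,(m,v^{2N}))\lr R(2N):=\Hom_{\Uq}(M,M\ot V^{\ot 2N})
\]
is a bijection. The dimensions of the two sides coincide: the left side has dimension $\binom{2N}{N}$ by \lemref{lem:dim-TLB}, and since $\ell\ge -1$ makes $M=M(\ell)$ projective by \lemref{lem:proj}(1), the right side is isomorphic to the zero weight space $(V^{\ot 2N})_0$ by \lemref{lem:proj}(2), which also has dimension $\binom{2N}{N}$.

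To upgrade this dimension equality to a bijection I exploit a common $\TL_{2N}(q)$-action. By \remref{rem:tlsub}, the finite Temperley-Lieb category $\TLC(q)$ sits inside $\TLBC(q,q^{\ell+1})$ as the subcategory of diagrams disjoint from the pole, so composition endows $W(2N)$ with a right $\TL_{2N}(q)$-module structure via the identification $\TL_{2N}(q)\cong\End^0(2N)$ from the proof of \lemref{lem:dim-TLB}. On the representation side, quantum Schur-Weyl duality identifies $\TL_{2N}(q)\cong\End_{\Uq}(V^{\ot 2N})$ acting on the second tensor factor of $R(2N)$, and $\CF'_N$ is $\TL_{2N}(q)$-equivariant because the restriction of $\CF'$ to $\TLC(q)$ is precisely the classical quantum Schur-Weyl functor. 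At generic $q$ the algebra $\TL_{2N}(q)$ is semisimple with pairwise non-isomorphic simple modules being the cell modules $W_{2t}(2N)$, $t=0,1,\dots,N$. The filtration $F_\bullet W(2N)$ of \lemref{lem:dim-TLB} therefore splits into $W(2N)\cong\bigoplus_{t=0}^N W_{2t}(2N)$. On the other side, the $\Uq\ot\TL_{2N}(q)$-bimodule decomposition $V^{\ot 2N}\cong\bigoplus_{t=0}^N V(2t)\ot W_{2t}(2N)$ together with $\Hom_{\Uq}(M,M\ot V(2t))\cong\CK_0$, which follows from \lemref{lem:proj}(2) applied to the (one-dimensional) zero weight space of $V(2t)$, gives $R(2N)\cong\bigoplus_{t=0}^N W_{2t}(2N)$ as $\TL_{2N}(q)$-modules. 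Since the simple summands are pairwise non-isomorphic, Schur's lemma forces $\CF'_N$ restricted to each summand $W_{2t}(2N)$ to be either zero or an isomorphism onto the corresponding summand of $R(2N)$.

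The hard part of the argument is to rule out vanishing on any one of these cell summands. I would tackle this by an explicit computation: take the unique standard Temperley-Lieb diagram $D_t\in W(2N)$ with exactly $t$ thin arcs tangling the pole in the sense of Definition \ref{def:stand-disting}, with the remaining $N-t$ arcs forming nested cups disjoint from the pole. By part (1) of \lemref{lem:reduct}, the class of $D_t$ in the top graded piece $F_t W(2N)/F_{t-1}W(2N)\cong W_{2t}(2N)$ is a nonzero power of $q$ times the standard generator, so it suffices to show that $\CF'_N(D_t)$ has a nonzero component in the $W_{2t}(2N)$-summand of $R(2N)$. Evaluating $\CF'_N(D_t)$ on the highest weight vector $m_+$ of $M$ using formula \eqref{eq:RtR} for the action of $R^T R$ on $m_+$ and the definition \eqref{eq:cup} of $\check C$ produces an element of $M\ot V^{\ot 2N}$ whose coefficient of $m_+$ lies in $(V^{\ot 2N})_0$; its further projection onto the one-dimensional subspace $V(2t)_0\otimes(\text{top generator of }W_{2t}(2N))$ reduces to a product of $q$-numbers of the form appearing in \eqref{eq:RtR}, all of which are nonzero at generic $q$. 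This delivers the required nonvanishing, so $\CF'_N$ is an isomorphism for every $N$, completing the proof that $\CF'$ is an equivalence of categories.
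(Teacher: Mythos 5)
Your overall architecture coincides with the paper's: essential surjectivity is immediate; the problem is reduced via self-duality and Lemma \ref{lem:hom-iso} to the spaces $\Hom_{\TLBC(q,\Omega)}(m,(m,v^{2N}))$; the dimensions are matched using Lemma \ref{lem:dim-TLB} and Lemma \ref{lem:proj}; and injectivity is reduced, via the $\TL_{2N}(q)$-action and the multiplicity-free decomposition into cell modules, to showing that $\CF'$ does not annihilate the layer indexed by $t$. Your use of generic semisimplicity and Schur's lemma here is an acceptable repackaging of the paper's filtration argument.

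However, the final nonvanishing step --- the only genuinely hard point --- contains a gap. You propose to detect the $W_{2t}(2N)$-component of $\CF'(D_t)$ by taking the coefficient of $m_+$ in $\CF'(D_t)(m_+)$, viewed as an element of $(V^{\ot 2N})_0$, and projecting onto $V(2t)_0\ot(\cdots)$, asserting that the result is a product of nonzero $q$-powers. This fails: the assignment $\phi\mapsto(\text{$m_+$-coefficient of }\phi(m_+))$ does not identify $\Hom_{\U_q(\fsl_2)}(M,M\ot V^{\ot 2N})$ with $(V^{\ot 2N})_0$ compatibly with the decomposition $\bigoplus_t V(2t)_0\ot W_{2t}(2N)$, and it need not even be injective. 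Concretely, take $\ell=0$ and $N=t=1$: the one-dimensional space $\Hom_{\U_q(\fsl_2)}(M(0),M(0)\ot V(2))$ is spanned by $m_+\mapsto Fm_+\ot v_1\ot v_1$, which has zero $m_+$-coefficient; correspondingly, a direct computation with \eqref{eq:RtR} shows that the $V(2)_0$-component of the $m_+$-coefficient of $\CF'(D_1)(m_+)$ is proportional to $q^{\ell}-q^{-\ell}$ and vanishes at $\ell=0$, even though $\CF'(D_1)$ does have a nonzero component in that summand (carried by the term $(q-q^{-1})q^{-1}Fm_+\ot v_1\ot v_1$). This is exactly why the paper's proof avoids the $m_+$-coefficient: it first closes off the diagram to an endomorphism $\tilde D$ of $(m,v^i)$, evaluates on the vector $m_+\ot v_{-1}^{\ot i}$, and tracks the coefficient of the extreme vector $F^im_+\ot v_1^{\ot i}$, which by \eqref{eq:RtR} is a manifestly nonzero product of factors $(q-q^{-1})$ and powers of $q$ (using that $F^im_+\ne 0$ in the Verma module), and which cannot be produced by any diagram with fewer than $i$ arcs entangled with the pole. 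You would need to replace your detection mechanism by an argument of this kind.
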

\begin{proof} We will prove the equivalence of categories by showing  that the functor $\CF'$ is essentially surjective and fully faithful. 

We set $\Omega=q^{\ell+1}$ in this proof. 

The essential surjectivity is clear, since  $\CF'(m, v^r) = M\otimes V^{\otimes r}$ for all $r$.

Since $V$ is self dual, $\Hom_\CT(M\otimes V^{\otimes r}, M\otimes V^{\otimes s})\cong 
\Hom_\CT(M, M\otimes V^{\otimes (r+s)})$ as vector spaces for all $r$ and $s$. In view of Lemma \ref{lem:hom-iso}, we have 
$\Hom_{\TLBC(q, \Omega)}((m, v^r), (m, v^s))$ $\cong$  $\Hom_{\TLBC(q, \Omega)}(m, (m, v^{r+s}))$. Hence in order to prove that 
the functor $\CF'$ is fully faithful, it suffices to show that 
$\CF'$ defines isomorphisms $\Hom_{\TLBC(q, \Omega)}(m, (m, v^r))\stackrel{\sim}
\longrightarrow \Hom_\CT(M, M\otimes V^r)$ for all $r$.
We shall do this by showing that  for all $r$, 
\begin{eqnarray}
&\dim\Hom_\CT(M,  M\otimes V^{\otimes r}) = \dim\Hom_{\TLBC(q, \Omega)}(m, (m, v^r)), \label{eq:dim-1}\text{ and }\\
&\dim\CF'(\Hom_{\TLBC(q, \Omega)}(m, (m, v^r))) = \dim\Hom_{\TLBC(q, \Omega)}(m, (m, v^r)).  \label{eq:dim-2}
\end{eqnarray}

Consider first $\Hom_\CT(M,  M\otimes V^{\otimes r})$. We decompose $V^{\otimes r}$ 
with respect to the joint action of $\U_q(\fsl_2)$ and $\TL_r(q)$ to obtain $V^{\otimes r} =\bigoplus_t V(t)\otimes W_{t}(r)$, 
where the direct sum is over all $t$ such that  $0\le t\le r$ and $r-t$ is even.  Since $\ell\ge -1$, we can apply 
part (2) of Lemm \ref{lem:proj} to obtain
\[
\begin{aligned}
\Hom_{\U_q(\fsl_2)}(M,  M\otimes V^{\otimes r})
&=\bigoplus_t \Hom_{\U_q(\fsl_2)}(M,  M\otimes V(t))\otimes W_{t}(r)\\
&=\bigoplus_t V(t)_0\otimes W_t(r)\\
\end{aligned}
\]

If $r$ is odd, then all $t$ are odd, hence all $V(t)_0=0$.  
If $r=2N$ is even,  
\[
\begin{aligned}
\Hom_{\U_q(\fsl_2)}(M,  M\otimes V^{\otimes 2N})=\bigoplus_{t=0}^N  V(2t)_0\otimes W_{2t}(2N)
=\bigoplus_{t=0}^N  W_{2t}(2N). 
\end{aligned}
\]
In view of Lemma \ref{lem:dim-TLB} and its proof, this establishes equation \eqref{eq:dim-1}.  


Next we consider $\CF'(\Hom_{\TLBC(q, \Omega)}(m, (m, v^r)))$.

Applying $\CF'$ to the filtration \eqref{eq:filtr} of $W(2N)$ by $\TL_{2N}(q)$-modules $F_tW(2N)$,  and writing 
$\CF_iW(2N)=\CF'(F_iW(2N))$, we obtain 
\begin{eqnarray}\label{eq:filtr-1}
\CF_NW(2N)\supset \CF_{N-1}W(2N)\supset \dots\supset \CF_1W(2N)\supset \CF_0W(2N)\supset\emptyset.
\end{eqnarray}
This is a filtration of modules for $\CF'(\End^0(2N))$, which by \cite[Thm. 3.5]{LZBMW} is isomorphic to $\TL_r(2N)$.
For any $i$, if the quotient $W'_{2i}(2N):=\frac{\CF_{i}W(2N)}{\CF_{i-1}W(2N)}\ne 0$, then it must be 
isomorphic to the cell module $W_{2i}(2N)$ as $\TL_r(2N)$-module.  Therefore, if we can show that 
$W'_{2i}(2N)\ne 0$ for all $i$, then \eqref{eq:dim-2} follows in view of Lemma \ref{lem:dim-TLB}. 

Assume to the contrary that  $W'_{2i}(2N)=0$ for some $i$. This happens precisely if, given any 
distinguished diagram $D\in F_iW(2N)$ with $i$ thin arcs entangled with the pole, there is an element $D^{red}\in F_{i-1}W(2N)$ such that 
\begin{eqnarray}\label{eq:contra-1}
\CF'(D)-\CF'(D^{red})=0. 
\end{eqnarray}

Let $D$ be given by the distinguished diagram 
in Figure \ref{fig:0-2N}. 
We shall show that $\CF'(D)\not\in\CF'(F_{i-1}W(2N)$

\begin{figure}[h]
\begin{picture}(150, 105)(-20,0)
{
\linethickness{1mm}
\put(-15, 25){\line(0, 1){75}}
\put(-15, 0){\line(0, 1){16}}
}

\qbezier(10, 100)(6, 80)(-11, 75)
\qbezier(-2, 100)(-5, 80)(-11, 80)

\qbezier(-19, 74)(-30, 70)(-30, 50)
\qbezier(-15, 22)(-30, 26)(-30, 50)

\qbezier(-19, 80)(-40, 75)(-38, 40)
\qbezier(-15, 18)(-40, 25)(-38, 50)

\qbezier(-15, 18)(50, 5)(50, 100)
\qbezier(-15, 22)(30, 10)(30, 100)
\put(35, 80){...}
\qbezier(70, 100)(80, 45)(90, 100)
\put(95, 80){...}
\qbezier(110, 100)(120, 45)(130, 100)
\end{picture}
\caption{Diagram $D: m\to(m, v^{2N})$}
\label{fig:0-2N}
\end{figure}

\noindent
Take the morphisms 
$A: (m, v^{2N})\to (m, v^{2i})$,  $I_i: v^i\to v^i$ and $S: (m, v^{3i})\to (m, v^i)$, which are respectively given by 
\[
\begin{picture}(150, 65)(-20,0)
\put(-30, 30){$A=$}
{
\linethickness{1mm}
\put(0, 0){\line(0, 1){60}}
}
\put(10, 0){\line(0, 1){60}}
\put(15, 30){...}

\put(15, 15){$2i$}
\put(30, 0){\line(0, 1){60}}

\qbezier(40, 0)(50, 60)(60, 0)
\put(65, 15){...}
\qbezier(80, 0)(90, 60)(100, 0)
\put(105, 0){,}
\end{picture}
\quad
\begin{picture}(80, 65)(-20,0)
\put(-30, 30){$I_i=$}
\put(0, 0){\line(0, 1){60}}
\put(5, 30){...}

\put(8, 15){$i$}
\put(20, 0){\line(0, 1){60}}
\put(25, 0){,}
\end{picture}
\quad 
\begin{picture}(110, 65)(-20,0)
\put(-30, 30){$S=$}
{
\linethickness{1mm}
\put(0, 0){\line(0, 1){60}}
}
\put(10, 0){\line(0, 1){60}}
\put(15, 30){...}

\put(18, 15){$i$}

\put(30, 0){\line(0, 1){60}}

\qbezier(40, 0)(70, 80)(100, 0)
\qbezier(60, 0)(70, 60)(80, 0)
\put(45, 5){...}
\put(105, 0){.}
\end{picture}
\]
We first compose $D$ and the related $D^{red}$ with $A$ to obtain $A D$ and $AD^{red}$, then 
tensor them with $I_i$ to obtain $AD\otimes I_i$ and $AD^{red}\otimes I_i$ , and finally compose 
these morphisms with $S$ to obtain $S(AD\otimes I_i)$ and $S(AD^{red}\otimes I_i)$. 
Write $\tilde{D}= \delta^{-N+i}S(AD\otimes I_i)$ and $\tilde{D}^{red}= \delta^{-N+i}S(AD^{red}
\otimes I_i)$; these are both endomorphisms of $(m, v^i)$. 

Now if $W'_{2i}(2N)=0$ for some $i$, equation \eqref{eq:contra-1} implies
\begin{eqnarray}\label{eq:contrad}
\CF'(\tilde{D})-\CF'(\tilde{D}^{red})=0.
\end{eqnarray}

The diagram of $\tilde{D}$ is given by  Figure \ref{fig:i-i}. 
The morphism $\tilde{D}^{red}$ is spanned by diagrams with less than $i$ thin arcs tangled with the thick arc. 
\begin{figure}[h]
\begin{picture}(150, 105)(-20,0)
{
\linethickness{1mm}
\put(-15, 25){\line(0, 1){75}}
\put(-15, 0){\line(0, 1){16}}
}

\qbezier(20, 100)(6, 80)(-11, 75)
\qbezier(-2, 100)(-5, 80)(-11, 80)

\qbezier(-19, 74)(-30, 70)(-30, 50)
\qbezier(-15, 22)(-30, 26)(-30, 50)

\qbezier(-19, 80)(-40, 75)(-38, 40)
\qbezier(-15, 18)(-40, 25)(-38, 50)

\qbezier(-15, 18)(-5, 15)(0, 0)
\qbezier(-15, 22)(10, 15)(20, 0)
\put(0, 90){...}
\put(3, 95){$i$}

\end{picture}
\caption{Diagram $\tilde{D}: (m, v^i)\to(m, v^i)$}
\label{fig:i-i}
\end{figure}

Let ${\bf w}=m_+\otimes v_{-1}^{\otimes i}$, where $v_{-1}^{\otimes i}=\underbrace{v_{-1}
\otimes v_{-1}\otimes \dots\otimes v_{-1}}_i$ and let us compare $\CF'(\tilde{D})({\bf w})$ and $\CF'(\tilde{D}^{red})({\bf w})$. By \eqref{eq:RtR}, we have 
\[
\CF'(\tilde{D})({\bf w})=\sum_{k=0}^i\frac{(q-q^{-1})^k q^{-i \ell + k(\ell+i-2k)}}{\qint{k}!}F^k m_+\otimes E^k v_{-1}^{\otimes i}.
\]
In particular,  the vector $F^i m_+\otimes v_1^{\otimes i}$ appears in $\CF'(\tilde{D})({\bf w})$ with a 
nonzero coefficient.  This vector is nonzero since $F^i m_+\ne 0$ for all $i$ in the Verma module $M$. 

Turning to $\CF'(\tilde{D}^{red})({\bf w})$,  we note that ${\bf w}=m_+\otimes v_{-1}^{\otimes i}$ 
is annihilated by the images under $\CF'$ of all diagrams from $(m, v^i)$ to $(m, v^{i-2})$ which have an arc $U$
as depicted below, since the invariant form on $V(1)$ satisfies $(v_{-1},v_{-1})=0$.
\[
\begin{picture}(150, 50)(-35,0)
{
\linethickness{1mm}
\put(-15, 0){\line(0, 1){40}}
}
\put(0, 0){\line(0, 1){40}}
\put(20, 0){\line(0, 1){40}}
\put(5, 20){...}
\qbezier(40, 0)(50, 50)(60, 0)
\put(80, 0){\line(0, 1){40}}
\put(100, 0){\line(0, 1){40}}
\put(85, 20){...}
\put(105, 0){. }
\end{picture}
\]
Thus among all the diagrams in $\tilde{D}^{red}$, only those shown in Figure \ref{fig:less-i} with $t<i$  could
have nonzero contributions to $\CF'(\tilde{D}^{red})({\bf w})$. 
\begin{figure}[h]
\begin{picture}(150, 105)(-60,0)
\put(-65, 50){$\Upsilon_t =$}
{
\linethickness{1mm}
\put(-15, 25){\line(0, 1){75}}
\put(-15, 0){\line(0, 1){16}}
}

\qbezier(20, 100)(6, 80)(-11, 75)
\qbezier(-2, 100)(-5, 80)(-11, 80)

\qbezier(-19, 74)(-30, 70)(-30, 50)
\qbezier(-15, 22)(-30, 26)(-30, 50)

\qbezier(-19, 80)(-40, 75)(-38, 40)
\qbezier(-15, 18)(-40, 25)(-38, 50)

\qbezier(-15, 18)(-5, 15)(0, 0)
\qbezier(-15, 22)(10, 15)(20, 0)
\put(0, 90){...}
\put(5, 95){$t$}

\put(30, 0){\line(0, 1){100}}
\put(40, 90){......}
\put(70, 0){\line(0, 1){100}}
\put(40, 95){$i-t$}
\end{picture}
\caption{A diagram in $\tilde{D}^{red}$}
\label{fig:less-i}
\end{figure}

\noindent
Using \eqref{eq:RtR}, we obtain  
\[
\CF'(\Upsilon_t )({\bf w}) =\sum_{k=0}^t \frac{(q-q^{-1})^k q^{-t \ell + k(\ell+t-2k) }}{\qint{k}!}F^k m_+
\otimes E^k v_{-1}^{\otimes t} \otimes v_{-1}^{\otimes (i-t)}.
\]
Note that for all $t<i$, the vector $F^i m_+\otimes v_1^{\otimes i}$ never appears in $\CF'(\Upsilon_t )({\bf w})$ with a non-zero coefficient.
Thus \eqref{eq:contrad} does not hold for any element ${D}^{red}\in F_{i-1}W(2N)$. Hence $W'_{2i}(2N)=W_{2i}(2N)$  for all $i$, and equation \eqref{eq:dim-2} is proved. 

We have now shown that $\CF'$ is fully faithful, which completes the proof of  Theorem \ref{thm:main}. 
\end{proof}

As an immediate consequence, we have the following result.
\begin{corollary} \label{cor:alg-iso} Let $V=V(1)$ and $M=M(\ell)$ with $\ell\ge -1$. Then for each $r=1, 2, \dots$, 
there is an isomorphism of associative algebras
\[
\TLB_r(q, \sqrt{-1} q^{\ell+1})\stackrel{\sim}\longrightarrow \End_{\U_q(\fsl_2)}(M\otimes V^{\otimes r}). 
\]
\end{corollary}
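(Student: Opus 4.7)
The plan is to deduce the corollary as a direct consequence of Theorem \ref{thm:main} combined with the algebra isomorphism of Lemma \ref{lem:TLB-alg-iso}. Since these results have already been established in the excerpt, the proof is essentially a matter of assembling them correctly and chasing parameters.

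First I would observe that Theorem \ref{thm:main} asserts that the covariant functor $\CF' : \TLBC(q, q^{\ell+1}) \lr \CT$ is an equivalence of categories. Any equivalence of categories restricts, for each object $A$, to an isomorphism of associative algebras $\End(A) \stackrel{\sim}\lr \End(\CF'(A))$, where the algebra structure on both sides is given by composition of morphisms. Applied to the object $(m, v^r)$ of $\TLBC(q, q^{\ell+1})$, which satisfies $\CF'(m, v^r) = M \otimes V^{\otimes r}$, this yields an isomorphism
\begin{equation*}
\TLBC_r(q, q^{\ell+1}) = \End(m, v^r) \stackrel{\sim}\lr \End_\CT(M \otimes V^{\otimes r}) = \End_{\U_q(\fsl_2)}(M \otimes V^{\otimes r}),
\end{equation*}
the final equality holding because $\CT$ is a \emph{full} subcategory of $\CO_{int}$.

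Next I would invoke Lemma \ref{lem:TLB-alg-iso}, which provides an algebra isomorphism
$F : \TLB_r(q, Q) \stackrel{\sim}\lr \TLBC_r\!\left(q, \tfrac{Q}{\sqrt{-1}}\right)$
for any invertible parameter $Q$. Specialising $Q = \sqrt{-1}\, q^{\ell+1}$ gives $\tfrac{Q}{\sqrt{-1}} = q^{\ell+1}$, so
\begin{equation*}
\TLB_r(q, \sqrt{-1}\, q^{\ell+1}) \stackrel{\sim}\lr \TLBC_r(q, q^{\ell+1}).
\end{equation*}
Composing this with the isomorphism obtained from $\CF'$ yields the desired isomorphism
\begin{equation*}
\TLB_r(q, \sqrt{-1}\, q^{\ell+1}) \stackrel{\sim}\lr \End_{\U_q(\fsl_2)}(M \otimes V^{\otimes r}).
\end{equation*}

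There is no real obstacle here since all the substantive work is done in Theorem \ref{thm:main} and Lemma \ref{lem:TLB-alg-iso}. The only point worth checking carefully is the bookkeeping of the parameter $\Omega = q^{\ell+1}$ (from \eqref{eq:omega-value}) versus the $Q = \sqrt{-1}\,\Omega$ conversion of Lemma \ref{lem:TLB-alg-iso}, to make sure that the argument $\sqrt{-1}\, q^{\ell+1}$ in $\TLB_r$ is consistent with the parameter $q^{\ell+1}$ appearing in $\TLBC_r$ through the functor $\CF'$.
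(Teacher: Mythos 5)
Your proposal is correct and follows exactly the paper's own argument: apply the equivalence of Theorem \ref{thm:main} to the object $(m,v^r)$ to identify $\TLBC_r(q,q^{\ell+1})$ with $\End_{\U_q(\fsl_2)}(M\otimes V^{\otimes r})$, then use Lemma \ref{lem:TLB-alg-iso} with $Q=\sqrt{-1}\,q^{\ell+1}$. The parameter bookkeeping you flag is the only delicate point, and you have handled it as the paper does.
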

\begin{proof} 
The functor $\CF'$ of Theorem \ref{thm:main} leads to  an isomorphism of associative algebras 
$\TLBC_r(q, q^{\ell+1})\stackrel{\sim}\longrightarrow \End_{\U_q(\fsl_2)}(M\otimes V^{\otimes r})$ for each $r$, where $\TLBC_r(q, q^{\ell+1})$ is defined by Definition \ref{def:algs}.  
By Lemma \ref{lem:TLB-alg-iso}, $\TLBC_r(q, q^{\ell+1})\cong \TLB_r(q, \sqrt{-1}q^{\ell+1})$. This proves the corollary.
\end{proof}

\section{An alternative version of the category $\TLBC$.}\label{sect:TLB-old}

\subsection{The category $\TLBB(q,Q)$}
Let $R$ be a ring and let $q,Q\in R$ be invertible elements. We begin by recalling 
the definition of the category $\TLBB(q,Q)$ from \cite{GL03}. The objects of $\TLBB(q,Q)$ are
the integers $t\in\Z_{\geq 0}$. A ($\TLBB$) diagram $D:t\lr s$ is a ``marked Temperley-Lieb diagram'' from $t$ to $s$,
and $\Hom_{\TLBB(q,Q)}(t,s)$ is the free $R$ module with basis the set of $\TLBB$ diagrams: $t\lr s$.

\begin{figure}[h]
\begin{center}
\begin{tikzpicture}[scale=1.5]

\foreach \x in {2,3,4,5}
\filldraw(\x,0) circle (0.05cm);
\foreach \x in {1,2,3,4,5,6}
\filldraw(\x,2) circle (0.05cm);
\draw (2,0)--(5,2);
\draw (5,0)--(6,2);

\draw node at (1,1) {Left region};

\draw [dashed] (0,0)--(7,0); \draw [dashed] (0,2)--(7,2);
\draw [dashed] (0,0)--(0,2);\draw [dashed] (7,0)--(7,2);



\draw(1,2) .. controls (2,1.2) and (3,1.2) .. (4,2);
\draw(2,2) .. controls (2.2,1.7) and (2.8,1.7) .. (3,2);

\draw(3,0) .. controls (3.2,0.3) and (3.8,0.3) .. (4,0);


\end{tikzpicture}
\end{center}
\caption{}\label{fig-2}
\end{figure}

To define marked diagrams, recall that a $\TL$ diagram $t\lr s$ divides the ``fundamental rectangle''  into regions,
of which there is a unique leftmost one (see Fig. \ref{fig-2}, which is a $\TL$ diagram: $4\lr 6$). A marked diagram is
a $\TL$ diagram in which the boundary arcs of the left region may be marked with dots. Thus the $\TL$ 
diagram depicted in Fig.1 has $2$ markable arcs, and a corresponding marked diagram is shown in Fig. \ref{fig-3}.

\begin{figure}[h]
\begin{center}
\begin{tikzpicture}[scale=1.5]

\foreach \x in {2,3,4,5}
\filldraw(\x,0) circle (0.05cm);
\foreach \x in {1,2,3,4,5,6}
\filldraw(\x,2) circle (0.05cm);
\draw (2,0)--(5,2);
\draw (5,0)--(6,2);

\draw node at (1,1) {Left region};

\draw [dashed] (0,0)--(7,0); \draw [dashed] (0,2)--(7,2);
\draw [dashed] (0,0)--(0,2);\draw [dashed] (7,0)--(7,2);



\draw(1,2) .. controls (2,1.2) and (3,1.2) .. (4,2);
\draw(2,2) .. controls (2.2,1.7) and (2.8,1.7) .. (3,2);

\filldraw(2.5,1.4) circle (0.1cm);\filldraw(3,0.66) circle (0.1cm);\filldraw(4,1.33) circle (0.1cm);

\draw(3,0) .. controls (3.2,0.3) and (3.8,0.3) .. (4,0);


\end{tikzpicture}
\end{center}
\caption{}\label{fig-3}
\end{figure}

These marked diagrams are composed via concatenation, just as $\TL$-diagrams, with three rules to bring them 
to ``standard form'', which is a diagram with at most one mark on each eligible arc. The rules are (recalling that
for any invertible element $x\in R$, $\delta_x=-(x+x\inv)$:

\be\label{eq:tlbrules}
\begin{aligned}
&\text{(i) If $D$ is a (marked) diagram and $L$ is a loop with no marks then $D\amalg L=\delta_q D$.}\\
&\text{(ii) If, in (i), $L$ is a loop with 1 mark then $D\amalg L=\left(\frac{q}{Q}+\frac{Q}{q}\right) D$.}\\
&\text{(iii) If an arc of a diagram $D$ has more than $1$ mark and $D'$ is obtained from $D$}\\
&\text {by removing one mark from that arc, then $D=\delta_Q D'$.}\\
\end{aligned}
\ee

\begin{definition}\label{def:tlbb}
The category $\TLBB(q,Q)$ has objects $\Z_{\geq 0}$ and morphisms which are $R$-linear combinations of marked diagrams,
subject to the rules \eqref{eq:tlbrules}.
\end{definition}

It is evident that $\Hom_{\TLBB}(s,t)$ has basis consisting of $\TLB$ (marked) diagrams with at most one mark
on each boundary arc of the left region. We shall refer to these as ``marked diagrams''; the next result counts them. 

\begin{proposition}\label{prop:tlbdim}
For integers $t,k\geq 0$, the number $b(t,t+2k)$ of marked diagrams $t\lr t+2k$ depends only on $t+k$. If $t+k=m$, then 
the number of these is $d(m)=\binom{2m}{m}$.
\end{proposition}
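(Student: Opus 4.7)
The plan is to reduce to the case $t=0$ via a ``bending'' bijection, and then to count the resulting marked matchings of $2m$ points.

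First I would show that $b(t,t+2k)$ depends only on $m=t+k$ by constructing a bijection between marked diagrams $t \to t+2k$ and marked diagrams $0 \to 2m$. Regard a diagram $D: t \to t+2k$ as a system of non-crossing arcs in a disk whose $2m$ boundary points occur cyclically in the order $(\text{top-}1, \ldots, \text{top-}s, \text{bot-}t, \ldots, \text{bot-}1)$, where $s=t+2k$. Cut the boundary at the leftmost edge of the rectangle (which is free of points) and straighten to place all $2m$ points on a line. Under this transformation, the leftmost region of $D$ (which contains the cut) becomes the outer region of the resulting planar matching of $2m$ points, so the arcs on its boundary---the markable arcs of $D$---correspond bijectively to the outermost arcs of the unrolled matching. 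Consequently, the cardinalities of marked diagrams in both settings coincide, and $b(t,t+2k)$ depends only on $t+k$.

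It then remains to count non-crossing matchings of $2m$ points on a line together with an arbitrary subset of outermost arcs marked. I would exhibit an explicit bijection between such marked matchings and the set of $\pm 1$-sequences of length $2m$ with zero sum, which is clearly of size $\binom{2m}{m}$. Given a marked matching, assign to position $i$ the symbol $+$ if $i$ is the left endpoint of an unmarked arc or the right endpoint of a marked arc, and $-$ otherwise; each arc then contributes one of each sign, so the total sum is zero. The inverse is constructed by decomposing the associated lattice path from $(0,0)$ to $(2m,0)$ into maximal excursions above and below the baseline: each above-baseline excursion is a Dyck subpath encoding a nested sub-matching with all unmarked arcs, while each below-baseline excursion, reflected across the baseline, similarly encodes a nested sub-matching whose outermost arc is marked and whose nested arcs are unmarked.

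The main obstacle will be verifying the bending bijection of the first step: I need to confirm that the transformation genuinely sends the arcs bordering the leftmost region of $D$ to the outermost arcs of the unrolled matching. This is a topological claim requiring case analysis according to the number of through-strings in $D$, with particular attention to how the leftmost through-string (when present) together with the outermost arcs of the top and bottom sub-matchings lying to its left combine to bound the leftmost region. Once this correspondence is established, the second step is routine combinatorial bookkeeping.
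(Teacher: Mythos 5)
Your first step (reducing to $t=0$ by unrolling the rectangle into a line of $2m$ points, and checking that the left region becomes the outer region so that markable arcs go to outermost arcs) is exactly the paper's first step, phrased in the language of a disk rather than a $180^\circ$ rotation of the bottom edge; that part is fine. Where you diverge is the count of marked matchings of $2m$ points: the paper sets up the recursion $d(m)=2\sum_{i=1}^m c(i-1)d(m-i)$ (the arc at point $1$ is always outermost, hence carries an optional mark, and its interior is an unmarked Catalan object) and extracts $\binom{2m}{m}$ from the generating function $d(x)=(1-2xc(x))^{-1}=(1-4x)^{-1/2}$, whereas you attempt a direct bijection with zero-sum $\pm1$-sequences.

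That bijection, as you have stated it, fails: the forward map is not injective. Take $m=2$. The matching $\{(1,2),(3,4)\}$ with both arcs marked gives $-+-+$ under your rule; but so does the matching $\{(1,4),(2,3)\}$ with $(1,4)$ marked and $(2,3)$ (necessarily) unmarked, since positions $2,3$ are the endpoints of an unmarked arc and hence receive $+,-$. Meanwhile $--++$ is never attained. The source of the error is that your sign rule is local to each arc, while your proposed inverse is not: reflecting a maximal below-baseline excursion across the baseline reverses the signs of \emph{every} step in that excursion, so the unmarked arcs nested inside a marked outermost arc must be encoded with left endpoint $\mapsto -$ and right endpoint $\mapsto +$, opposite to your rule. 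The fix is to let the sign of a point be governed by whether the \emph{outermost} arc of the block containing that point is marked (equivalently, define the encoding block by block, writing each marked block as the reflection of the Dyck word of its underlying matching); with that change the map does invert your excursion decomposition and the count $\binom{2m}{m}$ follows. As written, however, the second step does not go through.
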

\begin{proof}
Given a marked diagram $t\lr t+2k$, one obtains a diagram $0\lr 2(t+k)$ by rotating the bottom of the diagram through 
$180^\circ$ until it becomes part of the top, pulling all the relevant arcs appropriately. This is illustrated below in Fig. \ref{fig-4}  for the diagram
in Fig. \ref{fig-3}. (but note that we are applying this construction only to standard diagrams, which have at most one mark on each arc).

\begin{figure}[h]
\begin{center}
\begin{tikzpicture}[scale=1.5]

\foreach \x in {1,2,3,4,5,6,7,8,9,10}
\filldraw(\x,2) circle (0.05cm);





\draw(1,2) .. controls (2,1.2) and (3,1.2) .. (4,2);
\draw(2,2) .. controls (2.2,1.7) and (2.8,1.7) .. (3,2);
\draw(6,2) .. controls (6.2,1.7) and (6.8,1.7) .. (7,2);
\draw(8,2) .. controls (8.2,1.7) and (8.8,1.7) .. (9,2);
\draw(5,2) .. controls (6.2,1) and (8.8,1) .. (10,2);

\filldraw(2.5,1.4) circle (0.1cm);\filldraw(6.5,1.33) circle (0.1cm);\filldraw(8.5,1.33) circle (0.1cm);



\end{tikzpicture}
\end{center}
\caption{}\label{fig-4}
\end{figure}

This shows that $b(t,k)=b(0,t+k)$, which proves the first statement. Write $d(m)=b(0,2m)$, and following \cite[\S 4.3]{ILZ1},
we write $d(x)=\sum_{i=0}^\infty d(i)x^i$, where $d(0)=1$. Now if the $2m$ upper dots are numbered $1,2,\dots, 2m$
from left to right any marked diagram $D:0\lr 2m$ will join $1$ to an even numbered dot, say $2i$. For fixed $i$, the number
of such $D$ is $2c(i-1)d(m-i)$, where $c(i)$ is the Catalan number in \cite{ILZ1} (since the arc $(1,2i)$ may be either
marked or unmarked). It follows that 
\be\label{eq:rec1}
d(m)=2\sum_{i=1}^m c(i-1)d(m-i).
\ee
Multiplying \eqref{eq:rec1} by $x^m$ and summing, we obtain
\be\label{eq:rec2}
d(x)=\frac{1}{1-2xc(x)},
\ee
where $c(x)=\sum_{i=0}^\infty c(i)x^i=\sum_{i=0}^\infty \frac{1}{i+1}\binom{2i}{i}x^i$.
Now from the above relation, one sees easily that
\be\label{eq:rec3}
\frac{\partial}{\partial x}(xc(x))=xc'(x)+c(x)=\sum_{n=0}^\infty \binom{2n}{n}x^n.
\ee
Now differentiating the relation $xc(x)^2=c(x)-1$, we obtain 
\[
(1-2xc(x))\inv=d(x)=\frac{c'(x)}{c(x)^2}=\frac{\partial}{\partial x}(xc(x)-1),
\]
and the result follows from \eqref{eq:rec3}.
\end{proof}

\subsection{Generators and cellular structure} \label{ss:tlbb}
The usual Temperley-Lieb category $\TLC(q)$ (see Definition \ref{def:tl(b)}) is a subcategory of $\TLBB(q,Q)$, with the same objects, and morphisms
which are $R$-linear combinations of unmarked diagrams. Thus there is a faithful functor 
\[
\TLC(q)\lr\TLBB(q,Q)
\] 
as well as a ``tensor product'' functor
\be\label{eq:tp1}
\TLBB(q,Q)\times \TLC(q)\lr\TLBB(q,Q),
\ee
given by juxtaposing diagrams. Note that the functor \eqref{eq:tp1} restricts to the usual tensor product on $\TLC(q)$.

The category $\TLC(q)$ is generated, under composition and tensor product by the morphisms $A,U,I$ depicted in 
Fig. \ref{fig-5}  below, subject to the obvious relations. 

\begin{figure}[h]
\begin{center}
\begin{tikzpicture}[scale=1.5]

\foreach \x in {1,2,7,9}
\filldraw(\x,0) circle (0.05cm);

\foreach \x in {4,5,7,9}
\filldraw(\x,2) circle (0.05cm);
\draw (7,0)--(7,2);
\draw (9,0)--(9,2);

\draw node at (3,1) {;};\draw node at (6,1) {;};\draw node at (8,1) {;};

\filldraw(9,1) circle (0.1cm);



\draw node at (1.5,-0.5) {A};\draw node at (4.5,-0.5) {U};\draw node at (7,-0.5) {I};\draw node at (9,-0.5) {$C_0$};

\draw(4,2) .. controls (4.2,0) and (4.8,0) .. (5,2);

\draw(1,0) .. controls (1.2,1.8) and (1.8,1.8) .. (2,0);

\end{tikzpicture}
\end{center}
\caption{}\label{fig-5}
\end{figure}

The category $\TLBB(q,Q)$ is generated by the generators $A,U,I$ of $\TLC$, with $C_0$ added as shown. Evidently 
it follows from \eqref{eq:tlbrules}(iii) that $C_0$ satisfies $C_0^2=\delta_Q C_0$ 
and from \eqref{eq:tlbrules}(ii) that $A(C_0\ot I)U=\frac{q}{Q}+\frac{Q}{q}$. For $n=1,2,\dots$, the algebra $\Hom_{\TLBB(q,Q)}(n,n)$
is the Temperley-Lieb algebra $\TLBB_n(q,Q)$ of type $B_n$. It has a cellular structure described as follows. 

Given $n\in\Z_{>0}$, define $\Lambda_B(n)=\{t\in\Z\mid |t|\leq n\text{ and }t\equiv n\text{(mod }2)\}$. For $t\in\Lambda_B(n)$
define $M(t)$ as the free $R$-module with basis the monic diagrams $D:t\lr n$ in which no through string is marked. 
For each $t\in\Lambda_B (n)$, there is an injective map $\beta_t:M(t)\times M(t)\lr \TLBB_n(q,Q)$ given by 
\be\label{eq:cell}
\beta_t(D_1,D_2)=
\begin{cases}
D_2^*D_1\text{ if }t\geq 0\\
D_2^*(C_0\ot I^{\ot(|t|-1)})D_1\text{ if }t<0,\\
\end{cases}
\ee
where $D^*$ is the diagram obtained from $D$ by reflection in a horizontal.

The next result is straightforward.

\begin{proposition}\label{prop:tlbcell}
Maintain the above notation.
\begin{enumerate}
\item Let $C:=\amalg_{t\in\Lambda_B(n)}\beta_t:\amalg_{t\in\Lambda_B(n)}M(t)\times M(t)\lr \TLBB_n(q,Q)$. The image 
of $C$ is a basis of $\TLBB_n(q,Q)$. Write $\beta_t(S,T)=C_{S,T}^t$ for the basis elements ($S,T\in M(t)$).
\item The basis $C^t_{S,T}$ ($t\in\Lambda_B(n)$, $S,T\in M(t)$) is a cellular basis of $\TLBB_n(q,Q)$.
\end{enumerate}
\end{proposition}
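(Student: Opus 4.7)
The plan is to establish part (1) by a spanning-plus-counting argument, from which the cellular axioms in part (2) follow from the diagrammatic structure together with the reduction rules \eqref{eq:tlbrules}.

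For spanning, I would take an arbitrary standard marked diagram $D \in \TLBB_n(q,Q)$ and let $t_0$ be its number of through strings. If $t_0=0$ or the leftmost through string of $D$ is unmarked, set $t=t_0$; otherwise set $t=-t_0$. Cutting $D$ along a horizontal slice containing exactly $t_0$ through strings yields an upper half $D_1\in M(|t|)$ and, after reflection, a lower half $D_2\in M(|t|)$; in the negative case the mark on the leftmost through string is extracted as the central factor $C_0\otimes I^{\otimes(|t|-1)}$ via a single application of \eqref{eq:tlbrules}(iii). Thus $D=C^t_{D_1,D_2}$, so $C$ is surjective onto the standard-form diagrams that span $\TLBB_n(q,Q)$.

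By Proposition~\ref{prop:tlbdim}, $\dim\TLBB_n(q,Q)=\binom{2n}{n}$. To conclude that $C$ is injective, and hence a bijection onto a basis, I would verify
\[
\sum_{t\in\Lambda_B(n)}|M(t)|^2=\binom{2n}{n}
\]
by an induction on $n$ modeled on the generating-function argument in the proof of Proposition~\ref{prop:tlbdim}, or alternatively by exhibiting an explicit bijection between the image of $C$ and the set of standard marked diagrams $n\to n$.

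For part (2), the involution is reflection in a horizontal, and $(C^t_{S,T})^*=C^t_{T,S}$ is immediate from \eqref{eq:cell}. The cellular multiplication rule
\[
a\,C^t_{S,T}\equiv\sum_{S'\in M(t)}r_a(S',S)\,C^t_{S',T}\pmod{\TLBB_n^{<t}(q,Q)}
\]
then follows by analysing the product $aD_2^*$ (respectively $aD_2^*(C_0\otimes I^{\otimes(|t|-1)})$ when $t<0$): composition with $a$ either preserves both the through-string count and the marking status of the leftmost through string, in which case the result is a linear combination of basis elements of the form $C^t_{S',T}$, or strictly decreases the through-string count and therefore lies in $\TLBB_n^{<t}(q,Q)$. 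The partial order on $\Lambda_B(n)$ is by $|t|$, refined by declaring $-t<t$ for $t>0$; this refinement reflects the fact that a loop carrying a mark, when produced during composition, contributes via \eqref{eq:tlbrules}(ii) and can convert the leftmost through string from marked to unmarked.

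The main obstacle will be the careful bookkeeping in the multiplication rule for the negative-$t$ cells: one must verify that whenever composition creates a loop absorbing the $C_0$ insertion, the resulting scalar and adjusted diagram land precisely in the expected lower cell, so that the above partial order on $\Lambda_B(n)$ makes the cellular filtration work uniformly.
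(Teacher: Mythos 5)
The paper itself offers no argument here beyond the sentence ``The next result is straightforward,'' so there is no written proof to compare yours against; judged on its own terms, your architecture is the natural one. For part (1) the explicit bijection you mention as an alternative is actually the cleanest route and makes the counting identity unnecessary: a standard marked diagram $n\lr n$ with $s$ through strings is uniquely the concatenation of a monic upper half, a monic lower half (each carrying the marks on its own markable horizontal arcs, which are untouched by the gluing since monic diagrams have no horizontal arcs on their $s$-side), and a choice of mark or no mark on the leftmost through string, which is precisely $\beta_{\pm s}$. For the record the identity does hold, since $|M(t)|=\binom{n}{(n-|t|)/2}$ and $\sum_j\binom{n}{j}^2=\binom{2n}{n}$.

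The one genuine flaw is in your analysis of the multiplication rule. Your dichotomy --- either $a$ preserves both the through-string count and the marking status of the leftmost through string, or it strictly lowers the count --- omits the case that actually forces the tie-breaking order: composition can preserve the through-string count while \emph{adding} a mark to a previously unmarked leftmost through string (already $c_0\cdot C^1_{S,T}=C^{-1}_{S,T}$ for $n=1$), sending cell $t$ into cell $-t$; this is the reason one must declare $-t<t$ for $t>0$. Your stated mechanism for that refinement is backwards: rule \eqref{eq:tlbrules}(ii) removes a marked \emph{loop} as the scalar $\frac{q}{Q}+\frac{Q}{q}$ and cannot touch a through string, while rule \eqref{eq:tlbrules}(iii) only collapses several marks on one arc to a single mark times $\delta_Q$. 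Hence a marked leftmost through string can never become unmarked when the through-string count is preserved --- which is exactly what guarantees cell $-t$ is not sent into cell $t$. The same observation disposes of the ``obstacle'' you flag at the end: because $S$ and $T$ are monic, every strand of the middle layer $C_0\ot I^{\ot(|t|-1)}$ lies on a through string of $C^t_{S,T}$ with an endpoint on the outer boundary, so under one-sided multiplication the marked strand can never close into a loop; if it ceases to be a through string, the product already lies in a strictly lower cell by $|t|$ alone.
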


\begin{remark}
An analogous result for Temperley-Lieb algebras of type $D$ is proved in \cite{LS}
\end{remark}

\subsection{Cell modules for $\TLBB_n(q,Q)$} We give a description of the cell modules corresponding to the cellular structure
given in Proposition \ref{prop:tlbcell} and compute their dimension, although this is implicit in the results of \cite{GL03}.
The cell module $W_t(n)$  ($t\in\Lambda(n)$) has basis the set $M(t)$, with $\TLBB_n(q,Q)$-action defined in the usual way
by multiplication of diagrams.

Let $u(t,k)=\rank(W_t(|t|+2k))$. We have seen (Proposition \ref{prop:tlbdim}) that $u(0,k)=\binom{2k}{k}$. This is the case $t=0$
of the following result.

\begin{proposition}\label{prop:dimwt}
We have 
\[
\dim W_t(|t|+2k)=u(t,k)=\binom{|t|+2k}{k}.
\]
\end{proposition}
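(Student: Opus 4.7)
The plan is to induct on $t$. First, the dimension depends only on $|t|$: for $t<0$ the basis of $M(t)$ consists of the same monic diagrams $|t|\to n$ (the sign of $t$ merely records, via \eqref{eq:cell}, whether the element $\beta_t(D_1,D_2)$ carries the extra factor $C_0\otimes I^{\otimes(|t|-1)}$). So I may assume $t\ge 0$ and write $u(t,k)=\dim W_t(t+2k)$; the base case $t=0$ is Proposition \ref{prop:tlbdim}.

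For the inductive step with $t\ge 1$, I would stratify the basis of $W_t(t+2k)$ by the top endpoint $j=2i+1$ ($0\le i\le k$) of the through-string issuing from the leftmost bottom dot. This through-string cleaves the diagram into a \emph{left pocket} --- a non-crossing matching on the top dots $1,\ldots,2i$ --- and a \emph{right sub-diagram} carrying $t-1$ bottom dots and $t+2k-2i-1$ top dots. Among all arcs of the full diagram, those bordering its left region are precisely the outer arcs of the left pocket; arcs within the right sub-diagram are shielded from the left region by the through-string and so may carry no marks. The left pocket is therefore in bijection with the basis of $W_0(2i)$ and contributes $\binom{2i}{i}$ configurations by the base case, while the right sub-diagram is an unmarked monic planar Temperley-Lieb diagram and contributes $v(t-1,k-i)$ configurations, where $v(s,j)=\binom{s+2j}{j}-\binom{s+2j}{j-1}$ is the standard dimension of the TL cell module with $s$ through-strings. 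Altogether,
\[
u(t,k)\;=\;\sum_{i=0}^{k}\binom{2i}{i}\,v(t-1,k-i).
\]

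To identify this sum with $\binom{t+2k}{k}$ I would pass to generating functions. With $\varphi(x)=(1-4x)^{-1/2}$ and $C(x)=(1-\sqrt{1-4x})/(2x)$ the Catalan generating function (which obeys $xC(x)^2=C(x)-1$), the standard identity
\[
\sum_{k\ge 0}\binom{s+2k}{k}\,x^k\;=\;\varphi(x)\,C(x)^s
\]
combined with the direct check $\varphi(x)(2-C(x))=C(x)$ yields $\sum_j v(s,j)\,x^j=\varphi(x)C(x)^s(1-xC(x)^2)=C(x)^{s+1}$. Multiplying by $\sum_i\binom{2i}{i}\,x^i=\varphi(x)$ gives $\sum_k u(t,k)\,x^k=\varphi(x)\,C(x)^t=\sum_k\binom{t+2k}{k}\,x^k$, as desired.

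The main obstacle is the planar-geometric bookkeeping that justifies the decomposition: one must verify rigorously that the leftmost through-string cleanly separates the diagram into two regions, that no arc of the right sub-diagram can border the full left region, and that the marking rules on the left pocket coincide with those of an isolated $\TLBB$ diagram $0\to 2i$. These assertions are transparent from a picture but need careful articulation; once granted, the remaining counting is routine. A direct bijection between the basis and the size-$k$ subsets of $\{1,\ldots,t+2k\}$ is conceivable but appears less clean than the generating-function route.
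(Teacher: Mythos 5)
Your proof is correct, but it takes a genuinely different route from the paper's. The paper disposes of the proposition in two lines: rotating the bottom row of a monic diagram through $180^\circ$ (as in the cited argument of \emph{op.\ cit.}) yields the Pascal-type recursion $u(t,k)=u(t-1,k)+u(t+1,k-1)$, and the binomial identity $\binom{t-1+2k}{k}+\binom{t+1+2(k-1)}{k-1}=\binom{t+2k}{k}$ finishes the induction. You instead stratify the basis of $W_t(t+2k)$ by the top endpoint $2i+1$ of the leftmost through-string, obtaining the convolution $u(t,k)=\sum_{i=0}^{k}\binom{2i}{i}\,v(t-1,k-i)$, which you then resolve by generating functions via $\sum_j v(s,j)x^j=C(x)^{s+1}$ and $\sum_k\binom{s+2k}{k}x^k=\varphi(x)C(x)^s$. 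The geometric bookkeeping you flag as the main obstacle is in fact unproblematic: the leftmost through-string is a Jordan arc separating the fundamental rectangle, so no arc to its right can border the left region, and the left region passes beneath all arcs of the left pocket, so its markable arcs are exactly the outer arcs of that pocket, reproducing the count $d(i)=\binom{2i}{i}$ of Proposition \ref{prop:tlbdim}. (One small imprecision: the through-string itself \emph{does} border the left region; this is harmless only because basis diagrams of $M(t)$ carry no marks on through-strings by definition, a fact worth stating explicitly.) The trade-off: the paper's argument is shorter but leans on an external rotation lemma, whereas yours is self-contained and makes visible the decomposition of $W_t(t+2k)$ under the leftmost-defect stratification, at the cost of a generating-function computation where a single application of Pascal's rule would do.
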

\begin{proof}
It clearly suffices to consider the case $t\geq 0$. We prove the result by induction on the pair $t,k$, 
the result being known for $t=0$ (Proposition \ref{prop:tlbdim}), while for $k=0$, clearly $u(t,0)=1=\binom{|t|}{0}$. 

Now the same argument
as in \cite[Prop. 5.2]{ILZ1}, involving rotation of the bottom row of a monic diagram $|t|\lr |t|+2k$ through $180^\circ$
to obtain a diagram $0\lr 2|t|+2k$, shows that we have the following recursion for $u(t,k)$. Assume $t,k\geq 1$. Then
\be\label{eq:urec}
u(t,k)=u(t-1,k)+u(t+1, k-1).
\ee
Hence by induction, $u(t,k)=\binom{t-1+2k}{k}+\binom{t+1+2(k-1)}{k-1}=\binom{t+2k}{k}$.
\end{proof}

\subsection{An equivalence of categories} We have defined two ``Temperley-Lieb categories of type $B$'', viz. the category 
$\TLBC(q,\Omega)$ of Definition \ref{def:tl(b)} and the category $\TLBB(q,Q)$ of Definition \ref{def:tlbb}. 
Both categories contain the finite \tl category $\TLC(q)$ as a subcategory. In the case of $\TLBB(q,\Omega)$ this is realised as in 
\S\ref{ss:tlbb}. In the case of $\TLBC(q,\Omega)$ (cf. Definition \ref{def:tl(b)}) $\TLC(q)$ may be thought of as having the 
same objects $\{(m,v^r)\}, r=0,1,\dots$ as $\TLBC(q,\Omega)$, but where the morphisms are linear combinations of tangles which are
not entwined with the pole.

Our next objective is to prove the following result.

\begin{theorem}\label{thm:tlbequ}
Let $R$ be an integral domain with invertible elements $q,Q$ and $\Omega$ and an element $\sqrt{-1}$ such that
$\is^2=-1$. Then there is an equivalence of categories $\CM:\TLBB(q,Q)\lr \TLBC(q,\Omega)$ which takes 
the object $r\in\Z$ to $(m,v^r)$, is the identity on $\TLC(q)$, and respects the tensor product, if and only if 
$\Omega=\pm (\is Q)^{\pm 1}$. In this case we have $\cM(C_0)=\sqrt{-1}qL-QI$.
\end{theorem}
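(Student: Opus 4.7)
The plan is to first observe that any functor $\CM:\TLBB(q,Q)\to\TLBC(q,\Omega)$ satisfying the theorem's hypotheses is completely determined by $\CM(C_0)$. This is because $\TLBB(q,Q)$ is generated, under composition and the tensor product \eqref{eq:tp1}, by the $\TLC(q)$-generators $A$, $U$, $I$ together with $C_0$, and the only defining relations involving $C_0$ beyond those of $\TLC(q)$ are the idempotent-type relation $C_0^2 = -(Q+Q^{-1})C_0$ (from \eqref{eq:tlbrules}(iii)) and the marked-loop relation $A(C_0\otimes I)U = q/Q + Q/q$ (from \eqref{eq:tlbrules}(ii)). Since $\CM|_{\TLC(q)} = \id$, all that remains is to choose the image of $C_0$ appropriately.

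Next, I would invoke Lemma \ref{lem:dim-TLB} with $r=s=1$: the endomorphism space $\End_{\TLBC(q,\Omega)}((m,v))$ has dimension $\binom{2}{1}=2$ with basis $\{I,L\}$, so write $\CM(C_0) = \alpha L + \beta I$ for scalars $\alpha,\beta$. Using the skein relation \eqref{eq:skein} in the form $L^2 = q^{-1}(\Omega+\Omega^{-1})L - q^{-2}I$, the tangled loop relation \eqref{eq:loop}, and the free-loop value $-(q+q^{-1})$, the two $C_0$-relations translate into a polynomial system in $\alpha,\beta,q,Q,\Omega$. Assuming $\alpha\neq 0$ (the degenerate case $\alpha=0$ generically yields no solution), this system reduces, upon using the identity $(\Omega+\Omega^{-1})^2 - 4 = (\Omega-\Omega^{-1})^2$, to
\[
\beta = -\tfrac{1}{2}\bigl[(Q+Q^{-1}) + \alpha q^{-1}(\Omega+\Omega^{-1})\bigr], \quad \alpha(\Omega+\Omega^{-1}) = q(Q-Q^{-1}), \quad \alpha^2(\Omega-\Omega^{-1})^2 = q^2(Q+Q^{-1})^2.
\]

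The key step is then to eliminate $\alpha$ by squaring the second relation and comparing with the third, which yields $(Q+Q^{-1})^2(\Omega+\Omega^{-1})^2 = (Q-Q^{-1})^2(\Omega-\Omega^{-1})^2$, i.e., $(Q+Q^{-1})(\Omega+\Omega^{-1}) = \pm(Q-Q^{-1})(\Omega-\Omega^{-1})$. Expanding, the $+$ case reduces to $\Omega^2 + Q^2 = 0$ and the $-$ case to $\Omega^2 Q^2 + 1 = 0$; together these are precisely the condition $\Omega = \pm(\is Q)^{\pm 1}$, establishing necessity. For sufficiency, for each such $\Omega$ the displayed formulas uniquely determine $\alpha$ and $\beta$: taking $\Omega = -\is Q$ (so $\Omega+\Omega^{-1} = -\is(Q-Q^{-1})$) gives $\alpha = \is q$ and $\beta = -Q$, hence the stated formula $\CM(C_0) = \is qL - QI$.

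Finally, I would check that the resulting functor $\CM$ is an equivalence. Essential surjectivity is immediate from $\CM(r) = (m,v^r)$. For fully faithfulness, compare dimensions of morphism spaces: Proposition \ref{prop:tlbdim} gives $\dim\Hom_{\TLBB(q,Q)}(r,s) = \binom{r+s}{(r+s)/2}$ when $r+s$ is even (and $0$ otherwise), matching $\dim\Hom_{\TLBC(q,\Omega)}((m,v^r),(m,v^s))$ by Lemma \ref{lem:dim-TLB}. Since $L = (\is q)^{-1}(\CM(C_0)+QI)$ lies in the image of $\CM$, and $\CM$ already contains all of $\TLC(q)$, the functor is surjective on every $\Hom$ space, hence bijective by the dimension equality. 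The main obstacle is the algebraic reduction isolating the clean condition $\Omega = \pm(\is Q)^{\pm 1}$ from the three polynomial constraints and correctly tracking the sign ambiguities; once that is done, the remaining verification of the equivalence is routine.
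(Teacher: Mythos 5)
Your proposal is correct and follows essentially the same route as the paper: write $\CM(C_0)=\alpha L+\beta I$, translate the two $C_0$-relations via the skein and tangled-loop relations into a polynomial system, and solve to find $\beta=-Q$, $\alpha=\pm\sqrt{-1}q$ with the constraint $\Omega=\pm(\sqrt{-1}Q)^{\pm1}$; your elimination is organized slightly differently (squaring and comparing rather than first isolating $b=-Q$), but it is the same computation. Your explicit full-faithfulness argument via the matching dimension counts of Proposition \ref{prop:tlbdim} and Lemma \ref{lem:dim-TLB} together with surjectivity on generators is a welcome addition that the paper leaves implicit.
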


Note that the stated conditions on $\CM$ imply that for diagrams
$D\in\TLBB(q,Q)$ and $D'\in\TLC(q)$, we have
\be\label{eq:resp}
\CM(D\ot D')=\CM(D)\ot D'.
\ee

\begin{proof}[Proof of Theorem \ref{thm:tlbequ}]
We shall define $\CM$ on the generators $A,U,I$ and $C_0$, the effect of $\CM$ on objects having been given.
Since $\CM$ is to be the identity functor on the subcategory $\TLC(q)$, evidently we must have $\CM(A)=A$,
$\CM(U)=U$ and $\CM(I)=I$, where on the left side of these equations $A,U$ and $I$ are as in Fig. \ref{fig-5} of this section,
while on the right side they are as defined in \S\ref{sss:tp}.

It remains only to define $\CM(C_0)$. This is a morphism in $\End_{\TLBC(q,\Omega)}(m,v)$, and since this space 
has basis $I,L$, it follows that
\be\label{eq:b1}
\CM(C_0)=aL+bI,
\ee
for $a,b\in R$. We shall determine constraints on $a,b$. First, observe that it follows
by applying $A(-\ot I)U$ to both sides of \eqref{eq:b1} that
\[
\CM(A(C_0\ot I)U)=aA(L\ot I)U+bA(I\ot I)U,
\]
whence using \eqref{eq:loop} and \eqref{eq:tlbrules}(ii) it follows that
\be\label{eq:b2}
\kappa(= \frac{q}{Q}+\frac{Q}{q})=a\delta_\Omega+b\delta_q,
\ee
where, for any invertible $x\in R$, $\delta_x=-(x+x\inv)$.

Next, we square both sides of \eqref{eq:b1} using the relations \eqref{eq:skein} and \eqref{eq:tlbrules}(iii).
One obtains
\be\label{eq:b3}
(2ab-q\inv\delta_\Omega a^2)L+(b^2-q^{-2}a^2)=a\delta_Q L+b\delta_Q I,
\ee
and equating the coefficients of $L$ and $I$ respectively, we obtain
\be\label{eq:b4}
2ab-q\inv\delta_\Omega a^2=a\delta_Q,
\ee
and
\be\label{eq:b5}
b^2-q^{-2}a^2=b\delta_Q.
\ee
Moreover, since evidently $a\neq 0$, we may divide \eqref{eq:b4} by $a$ to obtain
\be\label{eq:b6}
2b-q^{-1}\delta_\Omega a=\delta_Q.
\ee

It therefore remains to solve equations \eqref{eq:b2}, \eqref{eq:b5} and \eqref{eq:b6} for $a$ and $b$.

It is straightforward to show that \eqref{eq:b6} and \eqref{eq:b2} imply that
\be\label{eq:b7}
b=-Q.
\ee

It now follows easily from \eqref{eq:b2} that
\be\label{eq:b8}
a=\frac{q(Q\inv-Q)}{\delta_\Omega}.
\ee

Now the values of $a,b$ in \eqref{eq:b7} and \eqref{eq:b8} are easily shown to satisfy \eqref{eq:b2} and \eqref{eq:b6}.
However they satisfy \eqref{eq:b5} if and only if $\delta_\Omega^2=-(Q\inv-Q)^2$, and this holds if and only if
\begin{eqnarray}\label{eq:Omega-Q}
\Omega=\pm \is Q^{\pm 1}. 
\end{eqnarray}
Substituting this into \eqref{eq:b8}, we obtain 
\[
a=\pm\sqrt{-1} q.
\]
It is now easily checked that the defining relations among the generators $A,U,I$ and $C_0$, which are those involving only
$A,U$ and $I$, as well as those in \eqref{eq:tlbrules}, are respected by $\CM$ and the theorem is proved.
\end{proof}

The algebra $\TLBB_r(q,Q)$, defined above as the algebra of endomorphisms of $r$ in the category $\TLBB(q,Q)$
is generated \cite[(5.7)]{GL03} by the elements $c_i:=I^{\ot(i-1)}\ot(U\circ A)\ot I^{\ot(r-i-1)}$ ($i=1,2,\dots,r-1$) and
$c_0:=C_0\ot I^{\ot(r-1)}$, subject to the relations set out in \cite[Prop. (5.3)]{GL03}. Likewise, the endomorphism algebra
$\TLBC_r(q,\Omega)$ of the object $(m,v^r)$ in $\TLBC(q,\Omega)$ has generators $C_i$, $i=1,\dots,r-1$, defined in analogy with
the $c_i$ using the elements $I,A$ and $U$ of \S\ref{sss:tp}, as well as the element $L\ot I^{\ot(r-1)}$, which we refer to as $L\in\TLBC_r(q,\Omega)$.

\begin{corollary}\label{cor:tlb}
For any integer $r>0$, there is an isomorphism of algebras 
$$
\TLBB_r(q,Q)\lr \TLBC_r(q,\sqrt{-1} Q\inv),
$$
which, in the notation explained above, 
takes the generators $c_i$ to $C_i$ ($i=1,\dots,r-1$) and takes $c_0$ to $-\sqrt{-1}qL-Q I^{\ot r}$,
where $\sqrt{-1}$ is a fixed square root of $-1$.
\end{corollary}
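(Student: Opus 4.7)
The plan is to derive this corollary as a direct consequence of Theorem \ref{thm:tlbequ}. An equivalence of categories induces, for each object, an isomorphism of endomorphism algebras; applied to the object $r$ (which $\CM$ sends to $(m,v^r)$), this gives an algebra isomorphism $\TLBB_r(q,Q)\xrightarrow{\sim}\TLBC_r(q,\Omega)$ for any of the four admissible parameter values $\Omega\in\{\pm(\sqrt{-1}Q)^{\pm 1}\}$ allowed by that theorem. To obtain the corollary it then remains only to identify the image of each algebra generator.

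To read off the explicit image of the generators, I would use that $\CM$ restricts to the identity on $\TLC(q)$ and respects tensor products in the sense of \eqref{eq:resp}. Each $c_i$ ($1\le i\le r-1$) is built from the generators $A,U,I$ of $\TLC(q)$ by composition and tensor product, and is therefore sent to the analogously constructed element $C_i\in\TLBC_r(q,\Omega)$. The only nontrivial image is that of $c_0=C_0\ot I^{\ot(r-1)}$, which by \eqref{eq:resp} equals $\CM(C_0)\ot I^{\ot(r-1)}$; and the proof of Theorem \ref{thm:tlbequ} supplies $\CM(C_0)=aL+bI$ with $b=-Q$ and $a=q(Q^{-1}-Q)/\delta_\Omega$.

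Finally, I would substitute the desired parameter, choosing the sign within the four admissible values so as to match the statement. For $\Omega=\sqrt{-1}Q^{-1}$ one computes $\delta_\Omega=\sqrt{-1}(Q-Q^{-1})$ and hence $a=\pm\sqrt{-1}q$; matching the stated form $c_0\mapsto -\sqrt{-1}qL-QI^{\ot r}$ then amounts only to fixing the appropriate square root of $-1$ in $\Omega$. The main obstacle, such as it is, is therefore purely bookkeeping of signs and reciprocals among the four admissible $\Omega$; the substantive content---the existence of the functor $\CM$ and its compatibility with the tensor structure---has already been established in Theorem \ref{thm:tlbequ}, so no further conceptual work is required.
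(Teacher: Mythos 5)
Your proposal is correct and follows exactly the route the paper intends: the corollary is stated as an immediate consequence of Theorem \ref{thm:tlbequ}, obtained by restricting the equivalence $\CM$ to the endomorphism algebra of the object $r$ and reading off the images of the generators from $\CM(C_0)=aL+bI$ with $b=-Q$ and $a=\pm\sqrt{-1}q$. Your remark that the residual sign in $a$ is absorbed by the choice of square root of $-1$ matches the paper's own Remark \ref{rem:tlb-algs} on this ambiguity.
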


\begin{remark}\label{rem:tlb-algs}
It follows from Corollary \ref{cor:tlb} and Lemma \ref{lem:TLB-alg-iso} that there is an isomorphism of algebras 
\[\TLBB_r(q,Q)\overset{\sim}{\lr}\TLB_r(q,Q).
\]
We mention also that evidently $\TLB_r(q,Q)\cong \TLB_r(q,-Q)$ via the isomorphism defined by $c_i\mapsto c_i$
($i=1,\dots,r-1$), $x_1\mapsto -x_1$. This accounts for the ambiguity in $\sqrt{-1}$.
\end{remark}

\subsection{Semisimplicity}
It is apparent that analysis of the algebras $\TLBB_n(q,Q)$ may be approached through their cellular structure 
outlined above.
This makes it possible to analyse the representations $M(\ell)\ot V(1)^{\ot r}$. In this section we 
determine precisely when $\End_{\U_q(\fsl_2)}(M(\ell)\ot V(1)^{\ot r})$ is semisimple. For ease of exposition, we assume 
throughout this section that $Q$ and $\Omega$ are both in the field $\CK_0$ and we have chosen a fixed square root $\sqrt{-1}\in\CK_0$.

\subsubsection{Semisimplicity of $\TLBB(q,\Omega)$} This may be approached as in \cite{CGM} for positive characteristic. 
However here we shall use the approach of \cite{GL98,GL03} which relate $\TLBB_r(q, Q)$ to the (unextended) affine Temperley-Lieb algebra
$T^a_r(q)$ as in \cite{GL03}, as well as the complete analysis of its cell modules $W_{t,z}(r)$ given in \cite{GL98}. 
The results we quote as background may all be found in \cite[\S\S6,10]{GL03}.

For each integer $r\geq 0$ define the following sets of parameters:
\be\label{eq:params}
\begin{aligned}
\Lambda(r)&=\{t\in\Z\mid 0\leq t\leq r \text{ and }r-t\in 2\Z\}\\
\Lambda_B(r)&=\{t\in\Z\mid |t|\in\Lambda(r)\}\\
\Lambda^a(r)&=\{(t,z)\in\Z\times \CK_0^*\mid t\in\Lambda(r)\}/\sim,\\
\end{aligned}
\ee
where in the third line, we declare $(0,z)\sim(0,z\inv)$.

The sets $\Lambda(r)$ and $\Lambda_B(r)$ are posets, with $\Lambda(r)$ being ordered in the obvious way, $\Lambda_B(r)$ ordered according
to $|t|$, with $|t|\geq t$. The set $\Lambda^a(r)$ indexes the cell modules of $T^a_r(q)$, among which all homomorphisms are known.

Any cellular algebra is semisimple if and only if there are no non-trivial homomorphisms among its cell modules \cite{GL96}. The key to the semisimplicity
of $\TLBB_r(q,Q)$ is therefore the following result. For the notation, the reader is referred to \cite[\S 5]{GL03}.

\begin{theorem}\label{thm:pullback}\cite[Cor (5.11), Thm. (6.15)]{GL03}
 \begin{enumerate}
\item For $1\leq i\leq r-1$, let $t_i=c_i+q\in\TLBB_r(q,Q)$, and let $t_0=c_0+Q$. There is a surjective homomorphism $g:T^a_r(q)\lr\TLBB_r(q,Q)$
defined by $g(f_i)=c_i$ for $i=1,\dots,r-1$, and $g(\tau)=\sqrt{-1}q^{\half(n-2)}t_0t_1\dots t_{r-1}$, where $\tau$ is the ``twist'' diagram in $T^a_r(q)$.
\item Denoting by $g^*(M)$ the pullback to $T^a_r(q)$ of a $\TLBB_r(q,Q)$-module $M$, we have, for $t\in\Lambda_B(r)$,
\[
g^*(W_t(r))\cong W_{|t|,z_t^{\varepsilon_t}}(r),
\]
where $\varepsilon_t=\frac{t}{|t|}=\pm 1$ and $z_t=(-1)^{t+\half}q^{-\frac{t}{2}}Q\inv$.
\end{enumerate}
\end{theorem}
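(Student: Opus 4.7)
The plan is to prove the two parts of the theorem separately: verify Part (1) by checking that the proposed assignments respect a presentation of $T^a_r(q)$, and then prove Part (2) by computing the scalar by which the image of the twist $\tau$ acts on a distinguished basis vector of the cell module.

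For Part (1), I would work with a presentation of $T^a_r(q)$ analogous to \eqref{eq:atlrel2}: generators $f_1,\dots,f_{r-1}$ satisfying the finite Temperley--Lieb relations together with a twist $\tau$ implementing the cyclic shifts $\tau f_i \tau^{-1}=f_{i+1}$ (indices mod $r$). The assignments $g(f_i)=c_i$ visibly respect the Temperley--Lieb relations, so only the shift relations require verification. Set $t_i:=c_i+q$ for $1\le i\le r-1$, $t_0:=c_0+Q$, and $\widetilde\tau:=\sqrt{-1}\,q^{(n-2)/2}\,t_0t_1\cdots t_{r-1}$. For $1\le i\le r-2$, the relation $\widetilde\tau\,c_i\,\widetilde\tau^{-1}=c_{i+1}$ reduces to the standard braid identity $t_it_{i+1}t_i=t_{i+1}t_it_{i+1}$ together with the far-commutation $t_it_j=t_jt_i$ for $|i-j|>1$, both of which hold inside $\TLBB_r(q,Q)$ because the $t_i$ satisfy Hecke-type quadratic relations. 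The cyclic closure $\widetilde\tau\,c_{r-1}\,\widetilde\tau^{-1}=c_0$ is the key case: here I would use the type-$B$ braid relation $t_0t_1t_0t_1=t_1t_0t_1t_0$ (the image of \eqref{eq:sigma-xi}) together with the quadratic relation satisfied by $t_0$ arising from Definition \ref{def:TLB-alg}; the normalization $\sqrt{-1}\,q^{(n-2)/2}$ is forced so that both sides match exactly. Surjectivity of $g$ is then automatic since $c_0$ lies in the image once $\widetilde\tau$ and the $t_i$ are all in the image.

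For Part (2), I would exploit the fact that both $W_t(r)$ (for $\TLBB_r(q,Q)$) and the family $\{W_{|t|,z}(r)\}_z$ (for $T^a_r(q)$) admit bases in bijection with ``half-diagrams'' carrying $|t|$ through strings, and the action of the finite Temperley--Lieb subalgebra generated by $c_1,\dots,c_{r-1}$ is the same on matching basis elements. The cell module $W_{|t|,z}(r)$ is distinguished within this family by the scalar $z$ through which the twist $\tau$ acts on a canonical through-string vector (up to the equivalence $z\sim z^{-1}$ when $|t|=0$, which explains the dichotomy $0$ versus nonzero $t$). I would therefore pick a distinguished half-diagram $v\in W_t(r)$ whose through strings are arranged on the right, and compute the action of $g(\tau)$ on $v$ by moving $t_0t_1\cdots t_{r-1}$ across $v$ one factor at a time. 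Each $t_i$ with $i\ge 1$ contributes a definite scalar via its Hecke quadratic relation as it sweeps over a through string; the final factor $t_0$ either attaches or detaches a mark on the leftmost through string, and this step is exactly what distinguishes the two signs of $\varepsilon_t=t/|t|$: $\varepsilon_t=+1$ corresponds to the case $t\ge 0$ of an unmarked leftmost through string, and $\varepsilon_t=-1$ to $t<0$ where the leftmost through string bears a mark. Tallying the scalars produced, together with the overall normalization $\sqrt{-1}\,q^{(n-2)/2}$, should yield the claimed eigenvalue $z_t^{\varepsilon_t}$ with $z_t=(-1)^{t+\half}q^{-t/2}Q^{-1}$.

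The main obstacle will be the explicit bookkeeping in this twist-eigenvalue computation: tracking all the powers of $q$ and $Q$, the signs contributed as $t_0$ interacts with the leftmost through string, and verifying that the prefactor $\sqrt{-1}\,q^{(n-2)/2}$ combines with the accumulated scalars to reproduce the formula precisely. This requires careful case analysis separating $t\ge 0$ from $t<0$ and invoking both the quadratic relations for the $t_i$ and the cellular structure of Proposition \ref{prop:tlbcell}.
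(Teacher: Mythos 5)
First, a point of comparison: the paper does not prove this theorem at all --- it is imported verbatim from \cite[Cor.\ (5.11), Thm.\ (6.15)]{GL03}, as the citation in the statement indicates, so there is no internal argument to measure your proposal against. Judged on its own terms, your outline follows the strategy one would naturally expect (verify the defining relations of $T^a_r(q)$ on the proposed images, then identify the pullback of each cell module by computing the twist parameter), but it contains a concrete error in Part (1) and leaves the substantive computation in Part (2) undone. The error: the relation you isolate as the key case, $\widetilde\tau\,c_{r-1}\,\widetilde\tau^{-1}=c_0$, cannot hold and is not one of the relations to be checked. Any conjugate of $c_{r-1}$ satisfies the quadratic relation $x^2=\delta_q x$, whereas $c_0$ satisfies $c_0^2=\delta_Q c_0$ by rule (iii) of \eqref{eq:tlbrules}; for generic $Q$ these are incompatible. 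In the presentation of the (unextended) affine algebra analogous to \eqref{eq:atlrel2} there are $r$ Temperley--Lieb generators $f_1,\dots,f_r$ with indices read mod $r$, and $c_0$ is not the image of any of them. What must actually be verified is that $g(f_r):=\widetilde\tau\,c_{r-1}\,\widetilde\tau^{-1}$ satisfies the cyclic Temperley--Lieb relations with $c_1,\dots,c_{r-1}$ and that $\widetilde\tau\,g(f_r)\,\widetilde\tau^{-1}=c_1$; this is where the quadratic relation for $t_0$ (equivalently the relations of Definition \ref{def:TLB-alg} transported through Corollary \ref{cor:tlb}) and the normalisation $\sqrt{-1}\,q^{\half(r-2)}$ enter. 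Surjectivity also requires showing that $c_0$ itself lies in the image, e.g.\ by multiplying $g(\tau)$ on the right by $t_{r-1}\inv\cdots t_1\inv$ to recover $t_0$.

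For Part (2), what you have written is a plan rather than a proof: the entire content of the claim is the exact value $z_t=(-1)^{t+\half}q^{-t/2}Q\inv$ and the exponent $\ve_t$, and none of the bookkeeping that would produce these is carried out. One further caution before you attempt it: for $|t|>0$ the twist $\tau$ does not act by a scalar on $W_{|t|,z}(r)$ --- it permutes the positions of the through strings --- so ``the scalar by which $\tau$ acts on a canonical through-string vector'' is not well defined as stated. You would first need to recall precisely how the parameter $z$ is attached to a cell module of $T^a_r(q)$ in \cite{GL98} (via the winding of the through-strand channel, equivalently the eigenvalue of a suitable power of $\tau$, or of $x_1$, on a distinguished vector), and only then can the sign $(-1)^{t+\half}$ and the half-integral power of $q$ be verified. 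As it stands, neither part of the theorem is actually established by your argument.
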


Since all homomorphisms among the modules $W_{t,z}$ are known, Theorem \ref{thm:pullback} may be used to determine
whether $\TLBB_r(q,Q)$ is semisimple, because $\TLBB$-homomorphisms among the $W_t(r)$ are precisely
$T^a$-homomorphisms among the lifts. We begin by explaining when we have a non-trivial homomorphism between two
cell modules $W_{t,z}$.

Define a preorder on $\Lambda^a(r)$ as follows. Say that $(t,z)\prec (s,y)$ ($(t,z),(s,y)\in\Lambda^a(r)$) if for some $\ve=\pm 1$
we have
\be\label{eq:preorder}
\begin{aligned}
s=t+2m &\text {  for some }m>0\text{ and }\\
y&=q^{-\ve m}z\text{ and }\\
z^2&=q^{\ve s}.\\
\end{aligned}
\ee

A short calculation using the equations \eqref{eq:preorder} reveals that there is a non-zero homomorphism of cell modules
$W_s(r)\lr W_t(r)$ for $\TLBB_r(q,Q)$ ($t,s\in\Lambda_B(r)$) if and only if either $W_t(r)\cong W_{-t}(r)$ (see Corollary
\ref{cor:cellpm} for some $t>0$ or if the following conditions hold:

\be\label{eq:homs}
\begin{aligned}
(i) \;&\text{$\exists t,s\in\Lambda(r)$ such that }s=t+2m>t\geq 0, \text{ and }Q=\sqrt{-1}q^{-(t+m)};\\
(ii) \;&\text{$\exists t< 0$, }s>0\in\Lambda_B(r), \text{ such that }t=-2m,\;\;s=4m\text{ and }Q=\sqrt{-1}q^{m};\\
(iii) \;\;&\text{$\exists t< 0$, }s<0\in\Lambda_B(r), \text{ such that }|s|= |t|+2m>|t|\text{ and }Q=\sqrt{-1}q^{-m}.\\
\end{aligned}
\ee

\begin{corollary}\label{cor:cellpm}
With the above notation, for each $t>0$, there is a non-trivial homomorphism $:W_t(r)\lr W_{-t}(r)$
  if and only if  $W_t(r)\cong W_{-t}(r)$.
Moreover this condition is satisfied if and only if $Q=\sqrt{-1}$.
\end{corollary}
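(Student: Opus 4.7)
The plan is to reduce the question to the classification of homomorphisms between cell modules of the affine Temperley-Lieb algebra $T^a_r(q)$, exploiting the surjection $g:T^a_r(q)\to\TLBB_r(q,Q)$ of Theorem \ref{thm:pullback}(1). Because $g$ is surjective, every $T^a_r(q)$-linear map between pullbacks is automatically $\TLBB_r(q,Q)$-linear, so
\[
\Hom_{\TLBB_r(q,Q)}(W_t(r),W_{-t}(r))\;=\;\Hom_{T^a_r(q)}(g^*W_t(r),g^*W_{-t}(r)).
\]

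Next I would apply Theorem \ref{thm:pullback}(2), which for $t>0$ identifies $g^*W_t(r)=W_{t,z_t}(r)$ and $g^*W_{-t}(r)=W_{t,z_{-t}^{-1}}(r)$; both are cell modules of $T^a_r(q)$ with the same first coordinate $t$. The known classification of homomorphisms between cell modules of $T^a_r(q)$, controlled by the preorder $\prec$ of \eqref{eq:preorder}, forces the first coordinate to strictly increase unless the two modules coincide as points of $\Lambda^a(r)$. Since here the common first coordinate is $t>0$ (so the identification $(0,z)\sim(0,z^{-1})$ does not intervene), any nonzero homomorphism between the two pullbacks must come from the two cell modules being identical; in that case they are obviously isomorphic. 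This establishes the first equivalence: a nonzero hom $W_t(r)\to W_{-t}(r)$ exists if and only if $W_t(r)\cong W_{-t}(r)$, equivalently $z_t=z_{-t}^{-1}$.

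For the second equivalence, I would substitute the explicit formula $z_t=(-1)^{t+\half}q^{-t/2}Q^{-1}$, which gives $z_{-t}^{-1}=(-1)^{t-\half}q^{-t/2}Q$. After cancelling the common factor $(-1)^{t-\half}q^{-t/2}$, the equation $z_t=z_{-t}^{-1}$ collapses to $-Q^{-1}=Q$, that is $Q^2=-1$. Since $\sqrt{-1}\in\CK_0$ by hypothesis, this forces $Q=\pm\sqrt{-1}$, and Remark \ref{rem:tlb-algs} identifies the two cases via an algebra isomorphism, so we may take $Q=\sqrt{-1}$.

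The main obstacle will be pinning down the precise statement that no nonzero homomorphism exists between two distinct cell modules $W_{t,y}(r)$ and $W_{t,z}(r)$ of $T^a_r(q)$ with common first coordinate $t>0$ and $y\neq z$. Morally this is immediate from the preorder in \eqref{eq:preorder} together with the general cellular principle that $\Hom(W(\lambda),W(\mu))\neq 0$ forces a relation in the cell partial order, but it requires careful bookkeeping from \cite{GL98}, since the $W_{t,z}(r)$ are indexed by the continuous parameter $z\in\CK_0^*$ rather than by the discrete poset $\Lambda_B(r)$ that governs $\TLBB_r(q,Q)$ itself.
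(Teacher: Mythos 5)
Your proposal is correct and follows essentially the same route as the paper: pull back along the surjection $g$ to reduce to the known classification of homomorphisms between affine Temperley-Lieb cell modules, observe that equal first coordinates force the two pullbacks to be isomorphic, and then compute that $z_t=z_{-t}^{-1}$ is equivalent to $Q^2=-1$. Your explicit appeal to Remark \ref{rem:tlb-algs} to pass from $Q=\pm\sqrt{-1}$ to $Q=\sqrt{-1}$ is in fact slightly more careful than the paper's own conclusion, which stops at $Q^2=-1$.
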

\begin{proof}
It follows from Theorem \ref{thm:pullback} that there is a homomorphism $:W_t(r)\lr W_{-t}(r)$ if and only
if there is a non-trivial homomorphism of $T^a_r(q)$-modules $:W_{t,z_t}(r)\lr W_{t,z_{-t}\inv}(r)$. But this happens 
if and only if  $W_{t,z_t}(r)\cong W_{t,z_{-t}\inv}(r)$, whence the first statement.

Moreover, again by the above statement, $W_t(r)\cong W_{-t}(r)$ if and only if $z_t=z_{-t}\inv$.
Using the value of $z_t$ given in Theorem \ref{thm:pullback} (2), one sees easily that this happens 
if and only if $Q^2=-1$.
\end{proof}

\subsection{Semisimplicity and Schur-Weyl duality} It is evidently a consequence of Theorem \ref{thm:main}, Corollary \ref{cor:alg-iso}
and  Corollary \ref{cor:tlb} that:

\begin{proposition}\label{prop:end-tlb}
Let $\ell\geq -1$ be an integer. For each integer $r\geq 1$, we have an isomorphism of associative algebras
\[
\End_{\U_q(\fsl_2)}(M(\ell)\ot V(1)^{\ot r})\overset{\sim}{\lr}\TLBB_r(q,\sqrt{-1}q^{-(\ell+1)}).
\]
\end{proposition}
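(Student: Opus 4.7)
The plan is to chain together the three cited results, taking care with the $\sqrt{-1}$ and sign conventions that distinguish the various flavours of type $B$ Temperley--Lieb algebra.

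First I would apply the functor $\CF'$ of Theorem~\ref{thm:main} to the endomorphism object $(m,v^r)$. Since $\CF'$ is an equivalence of categories and $\CF'(m,v^r) = M(\ell)\otimes V(1)^{\otimes r}$, it restricts to an algebra isomorphism
\[
\TLBC_r(q,q^{\ell+1}) = \End_{\TLBC(q,q^{\ell+1})}(m,v^r) \;\overset{\sim}{\lr}\; \End_{\U_q(\fsl_2)}(M(\ell)\otimes V(1)^{\otimes r}).
\]
(This is essentially the content of Corollary~\ref{cor:alg-iso}, but composed with the inverse of the isomorphism $F$ of Lemma~\ref{lem:TLB-alg-iso} so that the right hand side is expressed in terms of $\TLBC_r$ rather than $\TLB_r$.)

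Next I would invoke Corollary~\ref{cor:tlb}, which provides, for any invertible $Q$, an algebra isomorphism
\[
\TLBB_r(q,Q) \;\overset{\sim}{\lr}\; \TLBC_r\!\left(q,\sqrt{-1}\,Q^{-1}\right).
\]
Specialising to $Q = \sqrt{-1}\,q^{-(\ell+1)}$, one has $\sqrt{-1}\,Q^{-1} = \sqrt{-1}\cdot(-\sqrt{-1})\,q^{\ell+1} = q^{\ell+1}$, so that
\[
\TLBB_r\!\left(q,\sqrt{-1}\,q^{-(\ell+1)}\right) \;\overset{\sim}{\lr}\; \TLBC_r(q,q^{\ell+1}).
\]
Composing this with the isomorphism of the preceding paragraph yields the desired isomorphism.

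There is no genuine obstacle here; the entire content is in the three results already proved, and the only thing to check is that the parameter matching works out correctly, i.e.\ that $\sqrt{-1}\cdot (\sqrt{-1}\,q^{-(\ell+1)})^{-1} = q^{\ell+1}$. (One could equivalently route the argument through Corollary~\ref{cor:alg-iso} and Remark~\ref{rem:tlb-algs}, combining $\TLB_r(q,\sqrt{-1}\,q^{\ell+1})\cong \TLBB_r(q,\sqrt{-1}\,q^{\ell+1})$ with the sign ambiguity $\TLBB_r(q,Q)\cong\TLBB_r(q,-Q)$ and then with the inversion of $Q$ implicit in Corollary~\ref{cor:tlb}; both routes produce the same answer, reflecting the fact that the choice of square root of $-1$ is immaterial.) The mild bookkeeping with $\sqrt{-1}$ is the only point requiring attention, and it is settled in a single line.
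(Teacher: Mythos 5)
Your argument is correct and matches the paper exactly: the paper offers no written proof beyond asserting that the proposition is "evidently a consequence of" Theorem \ref{thm:main}, Corollary \ref{cor:alg-iso} and Corollary \ref{cor:tlb}, which is precisely the chain you assemble. Your parameter check $\sqrt{-1}\,\bigl(\sqrt{-1}\,q^{-(\ell+1)}\bigr)^{-1}=q^{\ell+1}$ is the only computation needed, and it is right.
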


Our final result uses the cellular structure to give a precise criterion for the semisimplicity of the endomorphism algebra, 
which may also be deduced from results in \cite{BS3}.

\begin{theorem}\label{thm:ss} Assume that $q$ is not a root of unity in $\CK_0$.
The endomorphism algebra $\End_{\U_q(\fsl_2)}(M(\ell)\ot V(1)^{\ot r})$ is non-semisimple for all $r$ if $\ell=-1$.
For $\ell\geq 0$, it is semisimple if and only if $r\leq\ell+1$. 
\end{theorem}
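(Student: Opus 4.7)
The plan is to invoke Proposition~\ref{prop:end-tlb} to reduce the problem to determining for which $r$ the algebra $\TLBB_r(q,Q)$ with $Q=\is q^{-(\ell+1)}$ is semisimple. Since $\TLBB_r(q,Q)$ is cellular by Proposition~\ref{prop:tlbcell}, semisimplicity is equivalent to the absence of non-trivial homomorphisms between distinct cell modules $W_t(r)$, $t\in\Lambda_B(r)$. Pulling these back to cell modules of the affine Temperley--Lieb algebra $T^a_r(q)$ via the surjection $g$ of Theorem~\ref{thm:pullback}, the full homomorphism structure is controlled by the three numerical conditions in \eqref{eq:homs}, together with the pair-isomorphism criterion of Corollary~\ref{cor:cellpm} (which demands $Q^2=-1$).

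For $\ell=-1$ we have $Q=\is$, hence $Q^2=-1$; Corollary~\ref{cor:cellpm} then gives $W_t(r)\cong W_{-t}(r)$ for every non-zero $t\in\Lambda_B(r)$. Since $\pm r\in\Lambda_B(r)\setminus\{0\}$ for every $r\geq 1$, this produces a non-trivial homomorphism and the algebra is non-semisimple for all such $r$.

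For $\ell\geq 0$, the hypothesis that $q$ is not a root of unity yields $Q^2=-q^{-2(\ell+1)}\neq -1$, so the Corollary~\ref{cor:cellpm} source of homomorphisms is absent. Condition~(ii) of \eqref{eq:homs} imposes $m=-(\ell+1)<0$ and is unsatisfiable. Condition~(i) forces $t+m=\ell+1$, hence asks for integers $m\geq 1$ and $t=\ell+1-m\geq 0$ with $s=\ell+1+m\leq r$ and $s\equiv r\pmod 2$; in particular the minimal $s$ it can produce is $\ell+2$. Condition~(iii) forces $m=\ell+1$, and asks for negative $t,s\in\Lambda_B(r)$ with $|s|=|t|+2(\ell+1)\leq r$; here the minimal $|s|$ is at least $2\ell+3$. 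Hence for $r\leq\ell+1$ none of (i)--(iii) can be met, and $\TLBB_r(q,Q)$ is semisimple. Conversely, for $r\geq\ell+2$ a non-trivial homomorphism always exists: take $(t,m,s)=(\ell,1,\ell+2)$ in~(i) when $r\equiv\ell\pmod 2$; take $(t,m,s)=(\ell-1,2,\ell+3)$ in~(i) when $r\not\equiv\ell\pmod 2$ and $\ell\geq 1$; and in the remaining case $\ell=0$ with $r$ odd (so $r\geq 3$), invoke~(iii) with $(t,s)=(-1,-3)$.

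The main obstacle will be the parity-sensitive case analysis in the last paragraph, in particular verifying the isolated case $\ell=0$, $r$ odd, which is not covered by (i) and must be handled via~(iii). Once this sub-case is dispatched, the semisimplicity dichotomy follows directly from the cellular criterion via Theorem~\ref{thm:pullback}.
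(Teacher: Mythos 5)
Your proposal is correct and follows essentially the same route as the paper: reduce via Proposition~\ref{prop:end-tlb} to $\TLBB_r(q,\is q^{-(\ell+1)})$, use cellularity and Theorem~\ref{thm:pullback} to translate semisimplicity into the absence of the homomorphisms classified by Corollary~\ref{cor:cellpm} and \eqref{eq:homs}, and then check the numerical conditions. If anything, your converse direction is more explicit than the paper's, which only treats the parity $r\equiv\ell\ (\mathrm{mod}\ 2)$ via the preorder \eqref{eq:preorder} and dismisses the other parity with ``a similar argument applies''; your isolation of the case $\ell=0$, $r$ odd, which genuinely requires condition~(iii) rather than~(i), is a worthwhile clarification.
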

\begin{proof}
When $\ell=-1$, $Q=\sqrt{-1}$. Hence by Corollary \ref{cor:cellpm}, there are coincidences among the
cell modules of $\TLBB_r(q,Q)$, whence the endomorphism algebra is neither semisimple nor quasi-hereditary. 
This proves the first statement.

Now assume $\ell>-1$. We apply the criteria in \eqref{eq:homs} in the case where $Q=\sqrt{-1}q^{-(\ell+1)}$.
For the criterion (i) to apply, we require $t+m=\ell+1>0$; (ii) cannot apply in any case, while for 
(iii) we require $m=\ell+1>0$. 

Now in case (i) we require $t+2m=\ell+1+m\leq r$ for some $m>0$, so $r\geq \ell+2$.
In case (iii), we require $m=\ell+1$ and $|t|+2m\leq r$, i.e. $r> 2(\ell+1)\geq \ell+3$.  
This shows that $\End_{\U_q(\fsl_2)}(M(\ell)\ot V(1)^{\ot r})$ is semisimple for $r\geq\ell+1$.

Consider now the case $r=\ell+2$ (where $\ell\geq 0$). We show that there is always a non-trivial homomorphism 
$:W_{\ell+2}(\ell+2)\lr W_{(\ell)}(\ell+2)$. For this, observe that in the notation above,
 $z_\ell=(-1)^{\ell+\frac{3}{2}}q^{\frac{\ell}{2}+1}\sqrt{-1}$ and 
 $z_{\ell+2}=(-1)^{\ell+\frac{3}{2}}q^{\frac{\ell}{2}+1}\sqrt{-1}=q\inv z_\ell$.
 Now take $t=\ell,m=1$ and $\ve=1$ in \eqref{eq:preorder}. one concludes that $(\ell,z_\ell)\prec(\ell+2,z_{\ell+2})$.
 This proves that $\End_{\U_q(\fsl_2)}(M(\ell)\ot V(1)^{\ot r})$ is not semisimple for $r\geq \ell +2$ and $r\equiv\ell\text{(mod }2)$.
 
 A similar argument applies for $r\geq\ell+2$ with $r\equiv\ell+1\text{(mod }2)$, and the proof is complete.
\end{proof}

\end{document}